\newcommand{\Spec}{{\mathrm{Spec}\, }}
\newcommand{\bbar}[1]{{\overline{#1}}}
\newcommand{\cA}{{\mathcal A}}
\newcommand{\cB}{{\mathscr B}}
\newcommand{\cC}{{\mathscr C}}
\newcommand{\cE}{{\mathcal E}}
\newcommand{\cF}{{\mathcal F}}
\newcommand{\cH}{{\mathcal H}}
\newcommand{\cM}{{\mathcal M}}
\newcommand{\cO}{{\mathcal O}}
\newcommand{\cS}{{\mathscr S}}
\newcommand{\cT}{{\mathscr T}}
\newcommand{\cX}{{\mathcal X}}
\renewcommand{\sp}[1]{{\mathrm{Spec}}}
\newcommand{\scr}[1]{\mathcal {#1}}
\renewcommand{\l}{\lambda}
\renewcommand{\o}{\omega}
\newcommand{\sg}{\mathrm{Sing}}
\newcommand{\ma}{\mathrm{Max}}
\renewcommand{\max}{\mathrm{max}}
\theoremstyle{plain}
\newtheorem{thm}{Theorem}[section]
\newtheorem{lem}[thm]{Lemma}
\newtheorem{pro}[thm]{Proposition}
\theoremstyle{definition}
\newtheorem{Def}[thm]{Definition}
\newtheorem{rem}[thm]{Remark}
\newtheorem{exa}[thm]{Example}
\newtheorem{voi}[thm]{}
\begin{document}

\bigskip 
\title[Algorithmic equiresolution of families of singularities]{Algorithmic equiresolution of possibly non-reduced families of singularities}

\author{Augusto Nobile}

\address{Louisiana State University \\
Department of Mathematics \\
Baton Rouge, LA 70803, USA}

\subjclass[2000]{14B05, 14E15, 14F05, 14D99}

\keywords{Resolution algorithm, embedded variety, coherent ideal, basic object}

\email{nobile@math.lsu.edu}

\begin{abstract}
This paper studies the concept of algorithmic equiresolution of a family of embedded varieties or ideals, which means a simultaneous resolution of such a family compatible with a given (suitable) algorithm of resolution in characteristic zero.  The paper's approach is more indirect: it primarily considers the more general case of families of basic objects (or marked ideals). 
A definition of algorithmic equiresolution is proposed, which applies to families whose parameter space $T$ may be non-reduced, e.g., the spectrum of a suitable artinian ring. Other definitions of algorithmic equiresolution are also discussed. These are geometrically very natural,  but the parameter space $T$ of the family must be assumed regular. It is proven that when $T$ is regular, all the  proposed definitions are equivalent. 
\end{abstract}
\maketitle
\section*{Introduction}
\label{S:intro}


After Hironaka solved the problem of resolution of singularities of algebraic varieties (working over fields of characteristic zero) attempts were made to provide a more constructive proof (algorithmic or canonical resolutions). These efforts were successful and, 
at present, theories of algorithmic resolution of singularities  are well established (see, e.g., \cite{BM}, \cite{BEV}, \cite{Cu}, \cite{EH}, \cite{EH}, \cite{Kol}, \cite{W}). Once algorithmic methods to resolve singularities of algebraic varieties are available, a natural question is to study simultaneous resolution (or {\it equiresolution}) of families of algebraic varieties, in a way compatible with a given resolution algorithm. The problem of algorithmic resolution, at least in what seems to be the most crucial case, namely that of an embedded variety (i.e., a closed subvariety of a regular ambient one), is closely related to that of {\it principalization} of a sheaf of ideals on a regular variety. Indeed, with a suitable algorithm for principalization of ideals it is possible to obtain one for desingularization of embedded varieties. (See, e.g, \cite{BEV} or \cite{BMF}.)

In \cite{EN} some basic results on algorithmic equiresolution are obtained in the case of families of ideals or of embedded schemes, parametrized by smooth (or at least reduced) varieties. (See  also \cite{BEV} and \cite{BMM}). Namely,  two notions of equiresolution are proposed, one requiring that the centers for the transformations leading to the algorithmic resolution of the family 
 be smooth over the parameter variety $T$ (condition AE), the other requiring the local constancy of an invariant associated to each fiber (condition $\tau$). It is proved that (under suitable compactness  assumptions, i.e., that certain morphisms be proper) both notions are equivalent.  An application of these results discussed in \cite{EN} is  the construction of an ``equisolvable''stratification of $H_{red}$, where $H$ is the Hilbert scheme parametrizing subschemes of a  scheme $W$, smooth over a field, having a fixed Hilbert polynomial $P$. For a number of reasons the restriction that the scheme  parametrizing the family   must be reduced is not very satisfactory (for instance, $H$ above might not be reduced). In this paper a definition of equiresolution that makes sense for families parametrized by non necessarily reduced schemes $T$ is proposed. We call it {\it condition (E)}. Other notions, called conditions ($A$), ($F$), and ($C$) are also presented. Condition ($A$) corresponds to condition (AE) of \cite{EN}, the other formalize variants of the idea that the algorithmic resolution of the family should induce the algorithmic resolution of the fibers. 
 
 The main objective of this paper  is to prove that when the parameter space $T$ is regular all these conditions are equivalent. Assuming  the properness of certain projections one proves that these are also equivalent to condition ($\tau$)   of \cite{EN}. 
 
 Condition (E) is defined on the basis of the work done in the article \cite{NE}. In it, the crucial case of ``infinitesimal'' families (of deformations), i.e., parametrized by $\Spec A$, $A$ a (suitable) artinian ring, is studied. In this situation there is a single fiber, and a notion of algorithmic equiresolution is introduced, which attempts to make precise the intuitive idea that the different steps in the algorithmic resolution of the fiber ``nicely'' spread over the infinitesimal parameter space $\Spec A$. Essentially, condition ($E$) requires that for all $t \in T$ (the parameter space) the naturally induced infinitesimal family over $\Spec (A_{n,t})$, where $A_{n,t} = {\cO}_{T,t}/M_{T,t}^{n+1}$, $M_{T,t}$ being the maximal ideal of $\cO_{T,t}$, be algorithmically equisolvable, in the sense of \cite{NE}. 
 
 In this paper, as well as  in \cite{NE}, we work on a settting more restricted than that of \cite{EN}, where a general resolution algorithm satisfying certain properties (the so-called {\it good algorithms}) was used. Here (or in \cite{NE}) we work with a specific algorithm, namely that of \cite{EV} or \cite{BEV} (also discussed in \cite{Cu}). Most likely other algorithms (like those of \cite{BM}, \cite{W}, etc.) could be used  to obtain similar results. Probably, in the future, when more experience is gained, a presentation at least as general as that of \cite{EN} will be available. 
 
 Also, as in \cite{NE}, most of the paper is devoted to the study of families of {\it basic objects} (called {\it presentations} in \cite{BM} and {\it marked ideals} in \cite{W}). Results in this  technically simpler case  imply similar ones in the more interesting case of families of ideals or embedded varieties. 
 
 The article consists of seven sections. In section \ref{S:AL} we review the algorithm (for basic objects over fields of characteristic zero) of \cite{BEV} of \cite{EV}, which  we call the V-algorithm, and a variation thereof (the W-algorithm, which borrows some techniques from \cite{W}). The W-algorithm was used in \cite{NE}.  We prove in detail the fact that the V- and the W-algorithms are the same. 
In section \ref{S:FB} we introduce families of basic objects and study some of their basic properties.
 In section \ref{S:BE}, working with families of basic objects parametrized by a regular scheme, 
we introduce the aforementioned conditions ($A$), ($F$), ($C$) and ($\tau$) and prove the equivalences among them already cited.  In section \ref{S:CE}, after reviewing necessary results from 
\cite{NE}, we introduce condition ($E$). The equivalence of conditions ($A$) and ($E$) is proved in sections \ref{S:S} and \ref{S:E}.  In order not to break the main line of reasoning, in section  \ref{S:S} we gather several results that are used in the proof, which is presented in section \ref{S:E}.  In section \ref{S:IV}, we explain how the work already done easily leads to similar results
 for families of ideals (or of triples $(W,I,E)$, where $W$ is a smooth variety, $I$ a sheaf of ideals on $W$, and $E$ a finite sequence of smooth hypersurfaces of $W$ with normal crossings), and families of embedded schemes and varieties.
 
 Trying to keep this article reasonably brief, and because this is still work in progress, we do not attempt to discuss applications similar to those found in \cite{EN}. We hope to return to these questions in the future.
 
  It is my pleasure to thank O. Villamayor and S. Encinas for useful discussions and their encouragement.

\section{Algorithms}\label{S:AL}
\begin{voi}
\label{V:a1.0}
 In general, we shall use the notation and terminology of \cite{H}. We describe next a few  exceptions. If $W$ is a scheme, a $W$-ideal will mean a coherent sheaf of 
${\cO}_W$-ideals.  If $I$ is a $W$-ideal, the symbol $\mathcal V (I)$ will denote the closed subscheme of $W$ defined by $I$. As usual, $V(I)$ will denote the closed subset of the underlying topological space of $W$ of zeroes of $I$. If $Y$ is a closed subscheme of a scheme $W$, the symbol $I(Y)$ denotes the $W$-ideal defining $Y$. An algebraic variety over a field $k$ will be a reduced algebraic $k$-scheme. If $W$ is a reduced scheme, a never-zero $W$-ideal is a $W$-ideal $I$ such that the stalk $I_x$ is not zero for all $x \in W$, in general $I$ is a never-zero ideal of $W$ if $I{\cO}_{W'}$ is never-zero, with $W'=W_{red}$. 

The term \emph{local ring} will mean 
 \emph{noetherian local ring}. In  general, the maximal ideal, or radical, of a local ring $R$ will be denoted by $r(R)$. Often, we write $(R,M)$ to denote the local ring $R$ with maximal ideal $M$. The order of an ideal $J$ in the local ring $(A,M)$ is the largest integer $s$ such that $J\subseteq M^s$. 
 
 If $W$ is a noetherian  scheme, $I$ is a $W$-ideal and $x \in W$, then $\nu _x(I)$ denotes the order of the ideal 
 $I_x$ of ${\cO}_{W,x}$.

The symbols ${\bf N}$, ${\bf Z}$ and ${\bf Q}$ will denote the natural, integral and rational numbers respectively. 
\end{voi}

\begin{voi}
\label{V:a2.0} 
We shall work with the following collection $\cS$ of schemes (see \cite{BEV}, section 8, or \cite{EN} (1.1) (c)). Let $\cS$ be the class of regular, equidimensional $k$-schemes $W$, where $k$ is a (variable) field of characteristic zero, satisfying the following  condition: the scheme $W$ admits a finite covering by open sets of the form $\Spec (R)$, where $R$ is a  noetherian, regular $k$-algebra, with the property that ${\rm Der}_k(R)$ is a projective $R$-module of rank $n = \dim W$; moreover, for any maximal ideal $M$ of $R$, $\dim (R_M)=n$ and $R/M$ is an algebraic extension of $k$. 
\end{voi}

\begin{voi}
\label{V:a3.0} A {\it basic object in $\cS$}, or a {\it $\cS$-basic object}, is a four-tuple $B=(W,I,b,E)$, where $W \in \cS$, $I$ is a never-zero $W$-ideal, $b >0$ is an integer and $E=(H_1, \ldots, H_m)$ is  a sequence of distinct regular hypersurfaces of $W$ (i.e., each $H_i$ is a regular Weil divisor of $W$) with normal crossings (\cite{BEV}, 2.1). 

The smooth $k$-scheme $W$ is the {\it underlying scheme} of $B$, denoted by $us(B)$. The {\it dimension of $B$} is the dimension of the scheme $us(B)$.
\end{voi}

\begin{voi}
\label{V:a4.0} The singular set $\sg (B)$ of the basic object of \ref{V:a3.0} is 
$\{ x \in W: \nu _{x}(I) \ge b\}$. This  is a closed set of $W$. Indeed, one may introduce an operation $\Delta ^{i}$ on $W$-ideals, $i \ge 1$, so that $\sg (B)=V({\Delta}^{b-1}(I))$. Concerning $\Delta=\Delta ^1$, if $w$ is a closed point of $ W$ and $x_1, \ldots, x_r$ is a regular system of parameters of $R={\cO}_{W,w}$ and $D_i$ is the derivation associated to $x_i$ (the ``partial derivative'' with respect to $x_i$), then ${\Delta}(I)_w$ is the ideal of $R$ generated by $I_w \cup \{D_i f : f \in I_w, i=1,\ldots,r\}$, ${\Delta}^i$ is defined by iteration. One defines $\Delta(I)$ more intrinsically by means of suitable Fitting ideals of $\Omega_{{\nu(I)}/k}$, $k$ the base field. (See \cite{BEV}, section 13, for more details.)  When we want to stress the fact that  $W$ is a scheme over a field $k$ we shall write ${\Delta}^{i}(I/k))$ rather than ${\Delta}^{i}(I))$. 
\end{voi}

\begin{voi}
\label{V:a5.0} 
{\it Pemissible transformations}. If $B=(W,I,b,E)$, $E=(H_1, \ldots, H_m)$ is a basic object, a  {\it permissible center} for $B$ is a closed subscheme $C \subseteq \sg(B) \subset W$ having normal crossings with the hypersurfaces $H_i$, $i=1, \ldots, m$. In particular, $C$ is regular, see 
\cite{BEV}, Definition 2.1. We define the {\it transform} of $B$ with center $C$ as the  basic object 
$B_1=(W_1,I_1,b,E_1))$ where $W_1$ is the blowing-up of $W$ with center $C$ (so, we have a natural morphism 
$W_1 \to W$), $E'=(H'_1, \ldots, H'_m, H_{m+1})$, where $H'_i$ is the strict transform of $H_i$ ($i=1, \ldots m$) and $H_{m+1}$ is the exceptional divisor. Finally $I_1$ is the {\it controlled transform} of $I$, that is $I_1:=I(H_{m+1})^{-b}I{\cO}_{W_1}$ (cf. \cite{BEV}, section 3). The process of replacing $B$ by such new basic object $B_1$ is is called the (permissible) {\it transformation} of $B$ with center $C$, often  denoted by $B \leftarrow B_1$. We'll also write $B_1:=\cT(B,C)$. A sequence of basic objects 
$B_0 \leftarrow \cdots \leftarrow B_r$, where each arrow $B_j \leftarrow B_{j+1}$ is a permissible transformation, is called a {\it permissible sequence} (of basic objects and transformations) 

If $W \leftarrow W_1$ is as above, we define the {\it proper transform} $\bar{I_1}$ of $I$ to $W_1$ as the $W_1$-ideal 
${\cE}^{-a} I {\cO}_{W_1}$, where ${\cE}$ defines the exceptional divisor and the exponent $a$ is as large as possible. This integer is constant along each irreducible component of the center $C$ used, but in general not globally constant. Given a permissible sequence of basic objects as above, define (inductively) ${\bar I}_{i+1}$ to be the proper transform of $\bar{I_i}$, for all $i$. We also say that $\bar{I_i}$ is the {\it proper transform} of $I$ to $B_i$.

In the notation above, if $f:W' \to  W$ is a smooth morphism, we define the pull-back of $B$ to $W'$ as the basic object $B'=(W',J,b,E')$, with $J=I{\cO}_{W'}$ and 
$E'=(L_1, \ldots, L_m)$, where $L_i= f_{-1}(H_i), \, i=1, \ldots, m$.
\end{voi}
\begin{voi}
\label{V:a6.0}
A {\it resolution} of the basic object $B$ is a permissible sequence 
$B_0 \leftarrow \cdots \leftarrow B_r$ such that $\sg (B_r)=\emptyset$. 

An algorithm of resolution (for  basic objects in $\cS$) is a rule that associates to each positive integer $d$  a totally ordered set $\Lambda^{(d)}$, with a minimum element $0_d$,  and for any given basic object (in $\cS$)  $B_{0}=(W_{0},I_{0},b_{0},E_{0})$ of dimension $d$ (with 
 $\sg(B_0) \not=\emptyset$), functions as follows. In all cases, their value is $0_d$ outside the singular set. Otherwise, we have an upper semicontinuous function 
$g_0: W_0 \to {\Lambda}^{d}$, (taking finitely many values), such that 
$C_0 = {\rm Max}(g_0) = \{w \in W_0: g_0(w) ~ {\rm is ~ maximum }\}$ is a permissible center. If $B_1=\cT (B_0,C_0)$ has $\sg (B_1) \not= {\emptyset}$ and $W_1=us(B_1)$,  we have sa function 
$g_1: W_1 \to {\Lambda}^{d}$, such that $C_1 = {\rm Max}(g_0)$ is a well determined $B_1$-permissible center, which we blow-up, and so on. Eventually we get in this way a permissible sequence 
$$ (1) \quad B_0\leftarrow B_1 \leftarrow \cdots \leftarrow B_r$$ 
	
We require that this be a resolution, i.e., $\sg (B_r)=\emptyset$. Moreover, this should be stable under etale base change, meaning that if $B'$ is the basic object obtained by pull-back under an etale map $W' \to W$, then the pull-back of the sequence (1) may be identified to the resolution sequence of $B'$ (and the new resolution functions are induced by the original ones). In a similar way it is required compatibility with respect to change of the base field $k$. This is called the {\it algorithmic resolution sequence of $B$}. Notice that if $\sg(B_0)=\emptyset$, no resolution function (or resolution center) is attached to $B_0$, in this case 
the algorithmic resolution sequence of $B_0$ is $B_0$ itself.
\end{voi}

\begin{voi}
\label{V:a7.0}
In this paper we shall work with a specific algorithm, namely that of \cite{BEV} or \cite{EV}. In order to review the basic construction, we must recall first some auxiliary notions.

(i)  {\it Monomial objects.} A basic  object
$B=(W,I,b,E)$, $E=(H_1, \ldots, H_m)$ (in $\cS$) is 
 {\it monomial} if for each $w \in W$ we have:
$I_w = {I(H_1)} ^{\alpha _1 (w)} \ldots {I(H_m)} ^{\alpha _m (w)}$, and each function
$\alpha _i: W \to {\bf Z}$
is constant on each irreducible component of  $H_i$ and zero outside $H_i$. If $B$ (in $\cS$) is monomial, we define functions ${\Gamma _i}$, $i=1,2,3$, with domain $W$, which take the value $0_d$ if $w \notin S=\sg (B)$ and otherwise are as follows.

If $w \in S$, $\Gamma _1 (w)$ is the smallest integer $p$ such that there are indices $i_1, \ldots, i_p$ such that
$$(1) \quad \alpha_{i_1} (w) + \cdots + {\alpha_{i_p}(w)} \geq b $$
Consider, for $w \in S$,  the set $P'(w)$ of sequences  $i_1, \ldots, i_p$ satisfying $(1)$ above, and let  $\Gamma _2 (w)$ be the maximum of the rational numbers
$({\alpha _{i_1} (w)} + \cdots + {\alpha _{i_p}(w)})/b$, for
$(i_1, \ldots, i_p) \in P'(w)$.

If $w \in S$, let $P(w)$ be the set of all sequences 
$(i_1, \ldots, i_p,0,0, \ldots)$ such that  
$({\alpha _{i_1} (w)} + \cdots + {\alpha _{i_p}(w)})/b = \Gamma _2 (w)$, and define 
$\Gamma _3 (w)$ to be the maximum of the set $P(w)$, when we use the lexicographical order.

Finally, one defines a function $\Gamma$ (or $\Gamma _B $) from $W$ to
${\bf Z} \times {\bf Q} \times {\bf Z}^{\bf N}$
 by the formula 
$ \Gamma (w) = (- \Gamma _1 (w), \Gamma _2 (w), \Gamma _3 (w))$. When the target is lexicographically ordered the function $\Gamma$ is upper semicontinuous.

Let max$(\Gamma_3) = (i_1, \ldots, i_p,0,0, \ldots)$ and take
$C= H_{i_1} \cap \cdots \cap H_{i_p}$. Then, it turns out that $C$ is a permissible center for the basic object $B$ and that, if $B_1={\cT}(B,C)$,   
${\mathrm {max}}( \Gamma _{B_1}) < {\mathrm {max}}( \Gamma _{B})$. 
  Iterating this process, after a finite number of steps we reach a situation where the singular locus is empty. For details see \cite{BEV}, section 20.

\smallskip
(ii) {\it The functions $t_r$}. If 
$$(1) \quad B_0 \leftarrow \cdots \leftarrow B_r$$
 is a sequence of basic objects (in $\cS$) and permissible transformations (where we write 
$B_j = (W_j, I_j, b, E_j)$, for all $j$), we define, for $x \in \sg (B_r)$, w-$\mathrm{ord}_r(x)$, or simply $\o _r (x)$, by the formula 
$\omega _r (x) :=   \nu _x ({\bar {I_r}}) / b$ (where ${\bar {I_r}}$ denotes  the proper transform of $I_0$ to $W_r$ (\ref{V:a5.0}). It can be proved that if 
in our sequence the center of each transformation is contained in 
${\ma} ({\omega}_j)$ (the set of points where ${\omega}_j$ reaches its maximum 
${\max} ({\omega}_j)$),  then 
${\max} ({\o}_{j-1}) \geq {\max} ({\o}_j)$, for $j < r$. 
 
 The functions $t_r$  are defined by induction on the length $r$ of a 
$\o$-permissible sequence as (1) above.  If $r=0$, for $x \in \sg (B_0)$ we write 
$t_0(x) = (\omega _0 (x), n_0(x))$, where $n_0(x)$ is the number of hypersurfaces in $E_0$ containing $x$. Assume that $t_j= (\omega _j, n_j)$ was defined (on $\sg (B_j)$) for $j < r$ and that in our sequence (1) is $t$- permissible. This means that each center $C_i$ used in the blowing-ups is contained in the subset of $\sg (B_i)$ where $t_i$ reaches its maximum value (in particular then (1) is $\o$-permissible). 
Let $s$ be the smallest index such that 
${\max}(\o _s) = {\max}(\o _r)$ and $E^{-}_r$ the collection of the hypersurfaces in $E_r$ which are strict transforms of those in $E_s$. Then, for 
$x \in \sg(B_r)$ we set: $t_r(x) = (\omega _r(x), n_r(x))$, where $n_r(x)$ is the number of hypersurfaces in $E^{-}_r$ containing $x$. A $B_r$-center which is contained in 
Max($t_r$) will be called $t$-permissible. It can be proved that in a $t$-permissible sequence the sequence ${\mathrm {max}} (t_j)$ is non-increasing.

\end{voi}

\begin{voi}
\label{V:a8}
{\it Equivalence} We shall recall the well-known notion of {\it equivalence} of basic objects, essentially due to Hironaka. Aside from permissible transformations, we shall consider other operations that may be applied to a basic object $B=(W,I,b,E), \, E=(H_1, \ldots, H_m)$.

\smallskip

(a) {\it Extensions}.  If $B$ is as above and $n$ a positive integer, let   $ W[n]:= W \times {\mathbf A}_k^n $ and    $(W[n],I[n], b,E[n])=B[n]$ the basic object  naturally
induced (by pull-back) by $B$ by means of  the natural projection ${W \times {\mathbf A}_k^n} \to W$. 
 The object  $W[1]$ is called 
  the {\it extension} of $B$ (terminology from \cite{EV}).
  
  \smallskip
  
  (b) {\it Open restrictions}. If $U$ is an open subset of $W$, the pull-back $B_{|U}$ of $B$ via the inclusion $U \subseteq W $ is  the {\it (open) restriction} of $B$ to $U$.
  
  \smallskip
  
  Now we say that basic objects 
   $B=(W , I, b, E)$ and $B'=(W , J, c, E)$  in $\cS$ are {\emph {equivalent}} if whenever 
   $$(1) \qquad B=B_0 \leftarrow \cdots  \leftarrow B_s ~,$$
   $$(2) \qquad B'=B'_0 \leftarrow \cdots  \leftarrow B'_s$$
 are sequences, where each arrow stands for either a permissible transformation, an extension or an open restriction (where   if  the $i$-th arrow of (1) corresponds to a permissible transformation, 
  then the $i$-th arrow of (2)  corresponds to a permissible transformation with the same center, and vice-versa), then we have: $\sg (B_s) = \sg(B'_s)$. 
\end{voi}

 \begin{voi}
\label{V:a9} 
 
Extensions  play an essential role in the proof of  an important result of Hironaka, whose ingenuous  method of proof is sometimes called ``Hironaka's trick,''  see \cite{BEV}, section 21 or \cite{BMF} 6.1. It says that if    $B=(W , I, b, E)$ and $B'=(W , J, c, E)$  are equivalent basic objects, then
$\mu _x (I) /b = \mu _x (J) /c$, for all $x \in \sg(B) = \sg (B')$. 

This result has the following corollary. Assume $B$ and $B'$ are equivalent basic objects, let 
 $$(1) \qquad B=B_0 \leftarrow \cdots  \leftarrow B_s ~,$$
   $$(2) \qquad B'=B'_0 \leftarrow \cdots  \leftarrow B'_s$$
   be $t$-sequences, where in each case we have used the same permissible centers. Notice that, writing 
   $B_i=(W_i,I_i,b,E_i), \, B'_i=(W_i,J_i,c,E_i)$, $E_i=(H_1,\ldots,H_{m},H_{m+1}, \ldots, H_{m+i})$, where $H_1, \ldots, H_m$ are the strict transforms of the hypersurfaces in $E_0$ (and similarly for $B'_i$), we have equalities 
   $$(3) \quad I_i={\bar I}_i {I(H_{m+1})^{a_1}}  \ldots     I(H_{m+i})^{a_i}      $$
   $$(4) \quad J_i={\bar J}_i {I(H_{m+1})^{a'_1}}  \ldots     I(H_{m+i})^{a'_i}      $$
   where the exponents are locally constant. We let $t_1, \ldots  t_s$ (resp.  
   $t'_1, \ldots  t'_s$) be the $t$-functions of $B$ (resp. of $B'$). Then, in the notation just introduced,:
   
   {\noindent {\it (a) $t_i=t'_i$, $i=1, \ldots, s$}},
   
     {\noindent {\it (b) $a_i/b=a'_i/c$, $i=1, \ldots, s$}}.
\end{voi}
  
  An important construction  (introduced in \cite{W})  that leads to equivalent objects  is that of the homogenized ideal, which we review next.
 
\begin{voi}
\label{V:a10.0}
{\it Homogeneized ideals}. 
If $W$ is a scheme, a $W$-{\it weighted ideal} is a pair $(I,b)$, where $I \subset {\cO}_W$ is $W$-ideal  and $b$ is a non-negative integer. 

Let $(I,b)$ be a weighted $W$-ideal. Its associated homogenized ideal is the the $W$-ideal 
$$(1) \quad H (I,b)= I + \Delta (I) T(I) + \cdots +  \Delta ^i(I) T(I) ^i + \cdots + \Delta ^{b-1}(I) T(I)^{b-1}$$ 
where we have written $T(I):= {\Delta}^{b-1}(I)$. 
If $(I,b)$ is a weighted ideal on a smooth scheme $W$, then 
${\Delta}^{b-1}(I) = {\Delta}^{b-1}({H}(I,b))$ (see \cite{W}, 2.9). If $b$ is clear from the context we simply write ${H}(I)$.

If $B=(W , I,b, E)$ is a  basic object in $\cS$, the basic object  ${H}(B):=(W, H (I,b), b,E)$ is the {\it homogenized basic object associated to $B$}. 

If 
 $B=(W , I,b, E)$ is a basic object, then $H (B)$ is equivalent to $B$.  The verification in case the arrow in (1) (or (2)) of \ref{V:a8} is a permissible transformation is done in  
  \cite{W}, 2.9.2.  Concerning possible extensions  use the (easily verified) fact that, in the notation of \ref{V:a8}, 
 $H(I{\cO}_{W[1]},b)=H(I,b){\cO}_{W[1]}$, whence $(H(B))(e)=H(B(e))$. The verification for open restrictions is immediate.
 \end{voi}

\begin{voi}
\label{V:a11.0}
Now we are in position to describe our main resolution algorithm. For the time being, it will be called the W-algorithm. This is discussed in 
\cite{NE}, \cite{BEV} or \cite{EV}, we include it for completeness and to fix the notation. 

For each integer $d \geq 1$ we must indicate a totally ordered set $\Lambda^{(d)}$  and, for any given basic object $B_{0}=(W_{0},I_{0},b_{0},E_{0})$ over $k$  of dimension $d$,  the corresponding resolution functions $g_j$.
 
This process will be defined inductively on the dimension of $B_0$, as follows. In the sequel, 
$\cS _1:= {\bf Q}\times {\bf Z}$ and $\cS _2:= {\bf Z}\times {\bf Q}\times {\bf Z}^{\bf N}$, in all cases lexicographically ordered.

($\alpha$) If ${\rm dim} (B_0) = 1$, let $\Lambda ^{(1)}= \{0\}\cup \cS _1 \cup \cS _2 \cup  \{\infty \}$, where if $a \in \cS _2 $ and $b \in \cS _1 $ then $a >b$, $0$ is the smallest element of the set  and $\infty $ is the largest one. Then we define $g_0(x)=0$ if $x \notin \sg(B_0)$ and  for 
$w \in \sg (B_0)$, $g_0(w)= t_0(w)$. If $g_i$ is defined for $i < s$, determining a permissible sequence 
$B_0\leftarrow B_1 \leftarrow \cdots \leftarrow B_s$ we define, for $w \in 
\sg(B_s)$, $g_s(w)=t_s(w)$ if $\o _s(w) > 0$ and $g_s(w)=\Gamma _s (w)$ if $\o _s(w)  0$; while $g_s(x)=0$ if $x \notin \sg(B_s)$.

In the induction step we need the following  auxiliary construction.

\smallskip
 
($\beta$) {\noindent {\it  Inductive step}}. Assume that we have an algorithm of resolution  defined for basic objects of dimension $< d$. 
Consider a $t$-permissible sequence of basic objects and transformations 
$$ (1) \quad B_0 \leftarrow B_1 \leftarrow \cdots \leftarrow B_s $$ 
Let $w \in \ma (t_s)$ and suppose that, near $w$,  $\dim \ma(t_s) \le d-2$. 
   Then,  there is an open neighborhood $U$ of $w$ (in $W_s=us(B_s)$), a hypersurface $Z_s$ on $U$, containing $w$, and a basic object 
${B_s}^*=(Z_s, {I_s}^*, {b_s}^*,{E_s}^*)$, having the following properties:

\smallskip
 
 (i) $\sg ({B_s}^*) = {\rm Max} \, ({t_s} _{|U})$. 
 
 \smallskip

(ii) The algorithmic resolution sequence corresponding (by the induction hypothesis) to ${B_s}^*$:
 $$ (2) \quad {B_s}^* \leftarrow ({B_s}^*)_{1} \leftarrow \cdots \leftarrow 
 ({B_s}^*)_{p}$$ 
(determined, say, by resolution functions $\widetilde{g}_i$) 
 induces a $t$-permissible sequence 
 $$ (3) \quad  {\widetilde B_s} \leftarrow  {\widetilde B_{s+1}} \leftarrow 
 {\widetilde B_{s+p}}             $$ 
 (obtained by using the same centers $C_i=\ma (\widetilde{g}_i)$, and denoting by  ${\widetilde B_s}$ the restriction of $B_s$ to $U$).
 
 \smallskip
 
 (iii) If 
 ${\rm max} (t _s)={\rm max} (t _{s+j})$ ($j=1, \ldots,  p$) then,  
  for all such indices $j$,  
 $us(({{B_{s}}^*})_j)$ gets identified to $Z_{s+j}$, the strict transform of $Z_s$ to $us({\widetilde B}_{s+j})$ and 
$\sg(({{B_{s}}^*})_j)= {\rm Max} (\widetilde{t_{r+j}})$ (where $\widetilde{t_j}$ are the $t$-functions of the sequence (3)). 

\smallskip

(iv) Under the assumption of (iii) for all $j=0, \ldots , p$, if 
$w_j \in {\rm Max}(\widetilde{t_{s+j}})$
 is in the pre-image of $w$ (under the morphism $us (\widetilde B_{s+j}) \to us (\widetilde B_{s})$ arising from (3)), the resolution function $\widetilde{g_j}$ of ${B_s}^*$ defines a function (still denoted by $\widetilde{g_j}$) on a neighborhood (in ${\rm Max}(\widetilde{t_{s+j}})$) of $w_j$.  Neither the neighborhood $U$ nor the hypersurface $Z_s$ are uniquely determined by the  process, but the value $\widetilde{g_j}(w_j)$ is independent of the choices made. 

 In \ref{V:a12.0} we shall explain how to make these constructions.

($\gamma$) Now, assuming the resolution functions given for dimension $< d$,  we'll define resolution functions $g_j$ for objects of dimension $d$ as follows. In this case,  
the totally ordered set of values will be: 
 $\Lambda ^{(d)}=\{0_d\} \cup (\cS _1 \times {\Lambda}^{(d-1)})\cup \cS _2 \cup \{ \infty  _d\}$; where 
 $\cS _1 \times {\Lambda}^{(d-1)}$ is lexicographically ordered, any element of $\cS _2$ is larger than any element of $\cS _1 \times {\Lambda}^{(d-1)}$, 
 $\infty _d$ is the largest element and $0_d$ the smallest one. In all cases we'll write $g_i(x)=0_d$ if $x \notin \sg(B_i)$.  For other values, consider first a single basic object $B_0$. Given $x \in \sg(B_0)$, let $M(1)$ denote the union of the one-codimensional components of $\ma (t_0):=M$. Necessarily we have $\o _0(x) > 0$ and there 
  are three cases.  
(a) $x \in M(1)$. Then, set $g_0(x)=(t_0(x), {\infty}_{d-1})$. 
(b) $x \in M \setminus M(1)$. Take a neighborhood $U$ of $x$ (in $W$) such that the basic object $B_0^*$ above  is defined 
 and the function 
 $\widetilde {g_0}:Z_0 \to {\Lambda}^{(d-1)}$ as in (iv) above (with $s = j=0$) corresponding to $B_0^*$. 
 Then set   $g_0(x)=(t_0(x),\widetilde {g_0}(x))$. This value is independent of the choices made. (c) $x \notin M$
 Then set $g_0(x)=(t_0(x),\infty _{d-1})$.

Assume now  that resolutions functions $g_i$, $i=0, \ldots, j-1$ have been defined, determining  centers $C_i = \ma \,{(g_i)}, i=0, \ldots, j-1$, leading to a permissible sequence $B_0 \leftarrow \cdots \leftarrow B_j$,  $
 B_i=(W_i,I_i,b,E_i)$, $i=0, \ldots, j$, $j > 0$. We assume that if $B_{j-1}$ is not a monomial object, then this is a $t$-sequence.
  
  There are two basic cases: (a) $\max (\o _j) =0$, (b) $\max (\o _j) > 0$. 
  
   In case (a), $B_j$ is monomial. For $x \in \sg(B_j)$ let $\Gamma _j$ be its $\Gamma$-function and set $g_j(x):=\Gamma _j (x)$. In case (b), let $M_1(j)$ denote the union of the one-codimensional components of $M(j):= \ma (t_j)$ and $H$ the exceptional divisor of the blowing-up (with center $C_{i-1}$) $W_{j-1} \leftarrow W_j$.  For $x \in \sg (B_j)$ there are three  sub-cases: 
   
   ($b_1$) $x \in M_1(t_j) \cap H$. Then we set  $g_j(x)=(t_j(x), {\infty}_{d-1})$                                                         
     
   ($b_2$) $x \in (M(j) \setminus M_1(j))\cap H$ (the {\it inductive} situation). Consider the smallest index $s$ such that $t_s(x_s)=t_j(x)$, where $x_s$ is the image of $x$ in $\sg (B_s)$ induced by the sequence above. Using 
  the construction of ($\beta$), applied to $x_s \in W_s$, we obtain resolution functions of $B^*_s$, $\widetilde{g_{0}}, \widetilde{g_{1}}, \dots$. So, it makes sense to take $\widetilde{g_{j-s}}(x)$, and it can be proved that this value is well-defined. We set 
 $g_j(x)=(t_j(x),\widetilde{g_{j-s}}(x)) \in \cS _1 \times {\Lambda}^{(d-1)}$. 
 
   ($b_3$)  $ x \notin H$. Then, if $x'$ is the image of $x$ in $W_{j-1}$, set $g_j(x)=g_{j-1}(x')$

 With this definition, if $\max (g_j) >0$ (and hence 
 $B_j$ is not monomial)  then the center $C_j={\mathrm {Max}}(g_j)$ is contained in ${\mathrm {Max}}(t_j)$.

 It can be proved that the sequence $\{  {\mathrm {max}}(g_j)       \}$ is strictly decreasing, which  leads to a resolution of $B$ (\cite{BEV} or \cite{EV}).

 \end{voi} 
\begin{voi}
\label{V:a12.0}
We shall explain better some details of this process, specially the crucial inductive step ($\beta$) of \ref{V:a11.0}. For this , we must review some other concepts. We follow the terminology of \cite{NE}.

\smallskip

($\alpha$) {\it Adapted hypersurfaces, nice objects}. Let   
$B=(W, I,b,E)$ a $\cS$-basic object, 
 ($E=(H_1, \ldots, H_m)$ ).  We say that  a hypersurface $Z \subset W$ is transversal to one of the divisors $H_i$ at $w \in H_i \cap Z$ if there is a regular system of parameters 
$a_1, \ldots, a_n$ 
in $\cO _{W,w}$ such that $I(Z)_{w}=(a_1)$ and $I(H_i)_w = (a_2)$; $Z$ is transversal to $H_i$ if it is so at each common point. Finally, $Z$ is transversal to $E$ if it is transversal to each hypersurface in $E$.

A hypersurface $Z \subset W$ 
is {\it adapted} to $B$ (or  $Z$ is {\it $B$-adapted}) if
 the following conditions hold:
\begin{itemize}
\item[(A1)] $I(Z) \subseteq \Delta ^{b-1} (I)$ (an inclusion of sheaves of
${\cO}_W$-ideals),
\item[(A2)] $Z$ is transversal to $E$ (in particular, $Z$ is regular). 
\end{itemize}
If, moreover, it satisfies:
\begin{itemize}
\item[(A3)] Whenever $D$ (resp. $D'$) is an irreducible component of $Z$ (resp. of
$V({\Delta}^{n-1}I)$) then $D \not= D'$ 
\end{itemize}
we say that $Z$ is {\it $B$-inductive}.

\smallskip
We shall say that $B$ (a basic object in $\cS$) is {\it nice} if either
Sing($B$) is empty or $B$ admits an adapted hypersurface. 

Notice that if $B$ (as above) is a nice basic object, then for all $x \in \sg(B)$ we have $\nu _x (I)=b$. A basic object having this property is called $good$ (terminology from \cite{EV}).

\smallskip

($\beta$) {\it Inductive objects}.  
If  $B$ is a nice basic object 
  we define a $W$-ideal, called the {\it coefficient ideal} and denoted by
${\cC}(I)$, as follows:
 $${\cC}(I) : = \sum_{i=0}^{b-1} \, [{\Delta}^i(I)]^{b!/b-i}  $$
 
 If $Z$ is a $B$-inductive hypersurface, then 
 the {\it coefficient ideal relative to $Z$}, or the 
$Z$-{\it coefficient ideal}, denoted by
${\cC}(I,Z)$, is the restriction of  
${\cC}(I)$ to $Z$. This is a never-zero $Z$-ideal.\\
The basic object 
$B_Z := (Z , {\cC}(I,Z), b!, E_Z)$, where  $E_Z:= ( {H_1 \cap Z},
\ldots, {H_m \cap Z })$, is called the {\it inductive object of $B$, relative to the inductive hypersurface $Z$}.

A fundamental property of the inductive object is: $\sg (B) = \sg (B_Z)$, and similarly when we consider  sequences of permissible transformations  based on these objects, using the same centers.

\smallskip

$(\gamma)$ Consider a $t$-permissible sequence of basic objects 
$B_0 \leftarrow \cdots \leftarrow B_r$ (where $(B_j=(W_j,I_j, b,E_j))$ and a point $w \in \sg (B_r)$. Then,  there is a nice object 
$B_r  ''=(U,I_r '',b_r '',E_r '')$
  ($U$  a suitable neighborhood of $w$ in $W_r$) admitting an adapted hypersurface $Z$. To construct it, introduce first a $W_r$-ideal $J_r$ as follows. 
  Let max($t_r$)=($b_r/b, \bar n$), 
$\bbar{I_r}$ the proper transform of $I_0$ to $W_r$.  If  
$b_r \geq b$  let $J_r={\bbar I_r}$.
 If  $b_r < b$, write  
$E_r=(H_1, \ldots, H_m, \ldots, H_{m+r}$) (where $H_1, \ldots, H_m$ are the strict transforms of the hypersurfaces that appear in $E_0$). Then there is an expression 
$I_r=  I(H_{m+1})^{a_1}\ldots I(H_{m+r})^{a_r} {\bar{I_r}} $. Set 
$\scr{C}_{r}=I(H_{m+1})^{a_1}\ldots I(H_{m+r})^{a_r}$  and  
$J_r = {\bbar{I_r}}^{b-b_r}+\scr{C}\,^{b_r}  $.

Now, returning to $B_r ''$, $U$ is a neighborhood of $w$  such  
$\mathrm{Max} ({t_r}_{|U})  = \mathrm{Max} ({{\o}_r}_{|U}) \cap H^*_1 \cap \cdots \cap  H^*_{\bbar{n}} \cap U$, $E_r ''=E_r^{+}= : (E_r \setminus E_r ^{-})_{|U}$, $I_r''=(J_r + I(H^*_1)^{b'} + \cdots + I(H^*_{\bbar n})^{b'})_{|U}$ 
 and $b_r ''=b_r$ if $b_r \ge b$ while 
$b_r ''=b_r(b-b_r)$ if $b_r < b$.

Of course, more precisely we should  rather write $({I_r}_{|U})''$ and $({B_r}_{|U})''$.

Some properties of this object $B_r''$ are (assuming $U=W_r$ to simplify the notation): (i) ${\mathrm{Sing}}(B_r'')={\mathrm{Max}}(t_r)$, (ii) If $C \subset {\mathrm{Max}}(t_r)$ is a center , $(B_r'')_1= \cT(B_r '',C)$,     $B_{r+1}=\cT(B_r,C)$, and ${\mathrm{max}}(t_r) = {\mathrm{max}}(t_{r+1})$, then 
$(B_r'')_1=(B_{r+1})''$.
\end{voi}

\begin{voi}
\label{V:a13.0} 
We complete the discussion of the W-resolution algorithm of \ref{V:a11.0}. In the notation of \ref{V:a11.0}, we take  as the open set $U$ a neighborhood of $w$ over which the  nice object $B_s''$ of \ref{V:a12.0} ($\gamma$) is defined, hence its (again nice) associated homogenized object   $H ({B_s}'')$
 (see \ref{V:a11.0}) admits an inductive hypersurface $Z_s$ containing $w$, defined on $U$. This will be the $Z_s$ of 
 \ref{V:a11.0}.  Our object $B^*_s$ of \ref{V:a11.0} will be  $({H}(B''_s))_{Z_s}$ . 
 
 In \cite{NE}, 8.5, it is verified that properties (i)-(iv) of \ref{V:a11.0} are valid.
\end{voi}
 \begin{voi}
 \label{V:a14.0} {\it The V-algorithm} The algorithm discussed in \cite{EV} or \cite{BEV}, which will be referred to as the V-algorithm, is very similar to the W-algorithm. One proceeds as in \ref{V:a11.0}, the only difference is that in the inductive step the auxiliary object $B^*_s$ is 
 $(B_s'')_{Z_s}$ rather than $({H}(B''_s))_{Z_s}$ (notation of \ref{V:a13.0}). This looks simpler, however with this approach it is more difficult to check that the process is independent of the choice of the adapted hypersurfaces $Z_s$ we choose. In the mentioned references, this is done by re-developing the theory in the broader context of {\it generalized basic objects}. A key role is played by Hironaka's trick. Using instead the W-algorithm, one may work entirely within the class of basic objects, the key element being a glueing lemma discovered by Wlodarczyk (\cite{W}), which involves suitable etale neighborhoods. This approach has the additional advantage that it can be used to generalize, to same extent, the theory to the situation where we work over an artinian ring rather than a field. See 
 \cite{NE} or, for a review, Section \ref{S:CE}.
 
 It seems that the V- and W- algorithms are ``essentially'' the same. But we may be more precise: they are exactly the same, in the sense that the resolution functions in either case coincide. We shall check this fact, but we need the following remarks.
\end{voi}

\begin{voi}
\label{V:mucheq} 
Here we discuss two  instances of equivalent basic objects (\ref{V:a8}). We shall freely quote results from \cite{BEV},    \cite{EV} and  \cite{NE}, and omit some verifications which are rather tedious but straightforward consequences of the definitions.

\smallskip
\noindent $(a)$ Let $B=(W,I,b,E)$ and ${\bbar B}=(W,J,b,E)$ be {\it equivalent} nice  basic objects, admitting  a common adapted hypersurface $Z$. Then, $B_Z$ and ${\bbar B}_{Z}$ are equivalent. 

Indeed, $\sg (B_Z)= \sg (B) = \sg (\bbar B) = \sg ({\bbar B}_Z)$. Now consider transformations ${B \leftarrow B_1}$, 
${\bbar B} \leftarrow {\bbar B}_1$, both with a common permissible center $C$ and the corresponding transformations (with center $C$) 
$B_Z \leftarrow (B_Z)_1$ and  ${\bbar B}_Z \leftarrow ({\bbar B}_Z)_1$. Let $Z_1$ be the strict transform of $Z$ to $W_1:= us(B_1)=us({\bbar B}_1)$. Then 
$\sg ((B_Z)_1)=\sg ((B_1)_{Z_1}  )= \sg (B_1) = \sg ({\bbar B}_1) = \sg (({\bbar B})_{Z_1}) = \sg ( ({\bbar B}_Z )_1 )$.  Take, instead,  
extensions. Let $Z(e):= {Z \times {\bf A}^{1}_{k}}$. Then there are identifications 
$ (B(e))_{Z(e)} = (B_Z)(e)$ and $({\bbar B} (e) _{Z(e)}=({\bbar B}_Z)(e) $. Since, $\sg(B(e) = \sg ({\bbar B}(e)$ (by equivalence), as above we get an equality $\sg((B_Z)(e))    = \sg (({\bbar B})_Z)(e)$.  The verification for open restrictions is easy. Repeating, we see that $B_Z$ and ${\bbar B}_Z$ are indeed equivalent.

\smallskip

$(b)$ If $B$, ${\bbar B}$ are basic objects such that their associated objects $B''$ and 
$(\bbar B)''$ of \ref{V:a12.0} ($\gamma$) are globally defined, then $B''$ and $(\bbar B)''$ are equivalent.

 The proof is similar to that of part (a), using the facts that $\sg (B'')= \ma (t)$,  the $t$-function is invariant under equivalence (\ref{V:a9}) and the following observation.
 
 If $B=B_0 \leftarrow \cdots \leftarrow B_s$ is a $t$-permissible sequence (with centers $C_i$, $B_i=(W_i,I_i,b,E_i)$), then it induces a $t$-permissible sequence of extensions $B(e)=B_0 (e)\leftarrow \cdots \leftarrow B_s (e)$ (with centers $C'_i = q_i ^{-1}(C_i)$,  $q_i:us(B_i(e))=W_i \times {\bf A}^{1} \to W_i $ being the projection). Assuming, to simplify the notation, that $B''$ globally defined, we have $(B_s(e))''=(B_s '')(e)$.
\end{voi}
 
 In the next statement, when we say ``the V-resolution functions of a basic object'' we mean the resolution functions we get when we apply to it the V-resolution algorithm, similarly for the W-resolution functions.

\begin{pro}
\label{P:igua}
Let $B=(W,I,b,E)$  be a $\cS$-basic object, 
$\{g_j  \}, ~ j =1, \ldots  ,r$ and  
$\{G_j  \}, ~ j =1, \ldots  ,r'$  be the V-  and W-resolution functions of $B$ respectively. Then $r=r'$ and 
$g_j = G_j$ for all $j$.

\end{pro}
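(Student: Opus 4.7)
The plan is to proceed by induction on $d=\dim B$, established simultaneously with the following auxiliary claim (AC): \emph{equivalent $\cS$-basic objects of dimension $d$ have identical W-resolution functions, hence identical resolution sequences.}

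For the base case $d=1$, neither algorithm uses an inductive step, so V and W coincide trivially. The claim (AC) at $d=1$ follows from \ref{V:a9}: part (a) gives equality of $t_j$-functions, and for the monomial regime, part (b) yields $a_i/b = a'_i/c$, which is precisely the data entering $\Gamma_1,\Gamma_2,\Gamma_3$, so the $\Gamma_j$-functions also agree.

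For the inductive step, suppose the proposition and (AC) hold in all dimensions $<d$. Comparing \ref{V:a11.0}($\gamma$) with \ref{V:a14.0}, the only place where the V- and W-algorithms can diverge for $B$ of dimension $d$ is the value of $\widetilde{g_{j-s}}(x)$ in case $(b_2)$, computed via a $(d-1)$-dimensional auxiliary object: V uses $B^*_{s,V}=(B''_s)_{Z_s}$, W uses $B^*_{s,W}=(H(B''_s))_{Z_s}$. Since by \ref{V:a10.0} one has ${\Delta}^{b-1}(B''_s) = {\Delta}^{b-1}(H(B''_s))$, the two objects admit exactly the same adapted hypersurfaces, so a common $Z_s$ may be chosen. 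Also by \ref{V:a10.0}, $B''_s$ and $H(B''_s)$ are equivalent nice basic objects sharing the adapted hypersurface $Z_s$; by \ref{V:mucheq}(a), $B^*_{s,V}$ and $B^*_{s,W}$ are equivalent basic objects of dimension $d-1$. Applying the outer induction hypothesis to $B^*_{s,V}$ shows the V- and W-algorithms produce the same functions on $B^*_{s,V}$, while (AC) at dimension $d-1$ shows the W-algorithm produces the same functions on the equivalent pair $(B^*_{s,V},\,B^*_{s,W})$. Chaining, $\widetilde{g_{j-s}}(x)$ is unchanged between V and W, hence so are all $g_j$.

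It remains to verify (AC) at dimension $d$. For equivalent $B_1,B_2$, the $t_j$-functions (and $\Gamma_j$-functions in the monomial regime) coincide by \ref{V:a9}, forcing the W-algorithm to select the same centers at each non-inductive step and preserving equivalence of transforms under \ref{V:a8}. For the inductive case, by \ref{V:mucheq}(b) the auxiliary constructions $B''_j$ and $(B_2)''_j$ are equivalent; then \ref{V:a10.0} plus \ref{V:mucheq}(a) show that $(H(B''_j))_{Z_j}$ and $(H((B_2)''_j))_{Z_j}$ are equivalent (using a hypersurface adapted to both, which exists since the two objects have the same $\Delta^{b-1}$-ideal by equivalence-preservation), and (AC) at dimension $d-1$ gives equality of the resulting $\widetilde{g_j}$-values; property (iv) of \ref{V:a11.0} handles any ambiguity in the choice of $Z_j$.

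The main obstacle is the bookkeeping of equivalence through all the intermediate constructions (proper transforms, passage to $B''$, homogenization $H$, restriction to an inductive hypersurface $B_Z$, permissible transformations, extensions, open restrictions), so that the chain of reductions never leaves the equivalence class and the algorithm's choices of $Z_s$ at each stage remain compatible. The crucial simplifying observation is that $B$ and $H(B)$ literally share the same candidate set of adapted hypersurfaces (since ${\Delta}^{b-1}(B)={\Delta}^{b-1}(H(B))$), so for the principal comparison $B^*_{s,V}$ vs.\ $B^*_{s,W}$ no choice of $Z_s$ ever has to be resolved. The remaining, more delicate compatibilities in (AC) are absorbed into property (iv) of \ref{V:a11.0} and the apparatus of \ref{V:mucheq}.
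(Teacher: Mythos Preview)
Your overall strategy is sound and in fact more ambitious than the paper's: you attempt a self-contained simultaneous induction proving both the proposition and the auxiliary claim (AC) that the W-algorithm is equivalence-invariant, whereas the paper instead invokes the already-established fact that the \emph{V}-algorithm is equivalence-invariant (cited as \cite{BEV}, 12.5). The paper's chain is: the V-resolution functions of $(B''_s)_{Z_s}$ and $(H(B''_s))_{Z_s}$ agree because these objects are equivalent by \ref{V:mucheq} and the V-algorithm respects equivalence; then induction on dimension converts V to W on the $(d-1)$-dimensional object. This sidesteps (AC) entirely.

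Your proof of (AC) at dimension $d$, however, contains a gap. You assert that for equivalent $B_1,B_2$ the objects $H(B_1'')$ and $H(B_2'')$ admit a common adapted hypersurface, justifying this by ``the two objects have the same $\Delta^{b-1}$-ideal by equivalence-preservation.'' But equivalence only guarantees $V(\Delta^{b_1''-1}(I_1''))=\sg(B_1'')=\sg(B_2'')=V(\Delta^{b_2''-1}(I_2''))$ as closed \emph{sets}; the ideals $\Delta^{b_1''-1}(I_1'')$ and $\Delta^{b_2''-1}(I_2'')$ need not coincide (indeed even $b_1''$ and $b_2''$ may differ, since equivalence only fixes the ratio $b_r/b$). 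Hence a section of one that cuts out a regular hypersurface need not lie in the other, and \ref{V:mucheq}(a) cannot be applied directly. Property (iv) of \ref{V:a11.0} does not rescue this: it asserts independence of the choice of $Z_s$ for a \emph{single} nice object, not compatibility of choices across two distinct equivalent objects. Filling this gap would essentially require reproving Wlodarczyk's gluing lemma in a form adapted to pairs of equivalent objects, which is substantial extra work. The paper's route, by delegating equivalence-invariance to the V-side where it is already known, is the cleaner path.
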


\begin{proof} In view of \ref{V:a9}, the only possible difference in the definition of the $V$- and $W$-resolution
sequences is the the ``inductive situation'', i.e., in the notation of \ref{V:a11.0} ($\gamma$), case ($b_2$). We prove this case, proceeding by induction on the dimension $d$ of $B$. If 
$d=\dim (B) =1$ situation ($b_2$) is not present, so the functions $g_i$ and $G_i$ agree in
this one-dimensional case.

For the inductive step, assume by induction that $g_i=G_i$ for $i < j $ (in particular, for $j=0$
there is no hypothesis). Thus, using the algorithmic centers 
$C_i =\ma (g_j)=\ma (G_j)$,
$i=0, \ldots, {j-1}$,
in both cases we get the same partial resolution $B=B_0 \leftarrow \cdots 
\leftarrow B_j$.
We want to prove: $g_j=G_j$. As mentioned, the only relevant case is
situation ($b_2$) of \ref{V:a11.0} ($\gamma$) (the inductive situation). Let $x \in \ma (t_j)$ (which has
codimension $>1$). Note that the index $s$ involved is the same, whether
we are defining $g_j$ or $G_j$. Let $x_s$ be the image of $x$ in $W_s =
us(B_s)$. Take an open neighborhood $U$ of $x_s$ where the nice basic
object
$(B_{|U})''$ is defined, admitting an inductive hypersurface $Z_s$. In the
definition of $g_j(x)$ (resp. $G_j(x)$) we use the inductive object
$  {({B_s}_{|U})''}_{Z_s}   $  
(resp.
$ {  {H ( {(B_s ''})}_{|U}}$).  Let
 $\widetilde {g_i}$ and $\widetilde {g'_j}$ be the V-resolution functions of
$    {({B_s}_{|U})''}_{Z_s}    $  and
$ ( {H }(  {({B_s}_{|U})''} )_{Z_s}$ respectively, and $\widetilde {G_i}$ the
W-resolution functions of
$  {({B_s}_{|U})''}$, $i=s, \ldots$. By \ref{V:mucheq} ,
 $ {({B_s}_{|U})''}_{Z_s}   $ and $          ({  {H }( ({B_s}_{|U})'')) }_{Z_s} $ are
equivalent. Now, a property of the V-resolution algorithm is that equivalent
basic objects have the same $V$-resolution functions (\cite{BEV}, 12.5),  so
$\widetilde{g_i}=\widetilde{g' _i}$, for all $i$.   By induction on the
dimension, 
$\widetilde{g_i}=\widetilde{G_i}$,  $i=s, \ldots$. So,
$g_j(x)=(t_j(x), \widetilde {g_j} (x)) =
(t_j(x), \widetilde {G_j} (x))=G_j(x)$, showing that $g_j=G_j$.

Now it is clear that  $r=r'$.

\end{proof}

Throughout the remainder of this article, when we mention {\it the resolution  algorithm} we mean either the V-algorithm or the W-algorithm. By \ref{P:igua} they coincide.

\section{Families of basic objects}
\label{S:FB}

In this section, all the considered schemes are in the class $\cS$ (see \ref{V:a2.0}). 

\begin{voi}
\label{V:ne0.1}
Let $\pi:W\to T$ be a morphism of schemes. If $\pi$ is clear from the context, we shall write  $W^{(t)}$ to denote the fiber 
$\pi ^{-1}(t)$ of $\pi$ at $t \in T$, i.e., the $k(t)$-scheme  obtained from $\pi$ by the base change 
$\Spec (k(t)) \to T$ (the unique morphism whose image is $t$). Similar notation is used for geometric fibers. If $t \in T$, $\bbar t$ denotes the geometric point determined by the algebraic closure of the residue field $k(t)$. 

We have a canonical morphism 
$j_t:W^{(t)} \to W$, inducing a homeomorphism of topological spaces $W^{(t)} \thickapprox {\pi}^{-1}(t) \subset W$. If $t$ is closed, $j_t$ is a closed embedding.  Often, if $Z$ is a closed subscheme of $W$, we want to view $Z \cap W^{(t)}$ as a scheme. This is done by 
{\it defining} $Z \cap W^{(t)}$ as ${j_t}^{-1}(Z)$ (a closed subscheme of $W^{(t)}$). If $t$ is closed, via the closed 
embedding $j_t$ this becomes the usual scheme-theoretic intersection of $W^{(t)}$ and $Z$ (as closed subschemes of $W$). Note that always the 
 fiber $W^{(t)}$ may be identified to a certain {\it closed } fiber. Namely, let 
 $T \{ t \} := \Spec ({\cO}_{T,t}), ~  W\{ t  \}= W  \times _{T} T\{ t \}$ (where 
$T \{ t \} \to T$ is the natural morphism sending the closed point $t'$ of $T \{ t \}$ to $t$) and $f_t:W \{ t \} \to T \{ t \}$ the second projection. Then, $W^{(t)}$ can be identified to ${f_t}^{-1}(t')$ (the only closed fiber of $f_t$). We also have a canonical morphism 
$j \{ t \}: W \{ t \} \to W$ (the first projection). Note that there is a canonical isomorphism between 
$Z \cap W^{(t)} := j_t^{-1(Z)}$ and $j \{ t \}^{-1} (Z) \cap f _t ^{-1} (t') $ (a scheme-theoretic intersection of closed subschemes of $W \{ t \}$). So, using these identifications, in many arguments there is no loss of generality in assuming that in an intersection $Z \cap W^{(t)}$ as above the point $t \in T$ is closed.
\end{voi}
 
 \begin{voi}
 \label{V:ne0}
  Let $\pi:W\to T$ be a smooth morphism of schemes and  $w$ a point of $W$, $t=\pi(w)$,  $R={\cO}_{W,w}$,
$R'={\cO}_{W^{(t)},w}$ (the local ring of the  fiber $W^{(t)}$ at $w$, which is regular). 

(a) A system of elements $a_1, \ldots, a_n$ in $R$ is called a {\it regular system of parameters of $R$, relative to $\pi$} (or a {\it $T$-regular system of parameters}, or an {\it $A$-regular system of parameters}, if $T=\Spec (A)$)  if the induced elements  $a_1^{(t)}, \ldots,  a_n^{(t)}$ in $R'$ form a regular system of parameters, in the usual sense, of the regular local ring ${\cO}_{W^{(t)},w}$. Elements $a_1, \ldots, a_r$ of $R$ are
{\it part of a $T$-regular system of parameters}, or 
{\it a partial  $T$-regular system of parameters}, if they are contained in an  $T$-regular system of parameters $a_1, \ldots, a_n$, $r \leq n$, of $R$. Then necessarily 
$a_1, \ldots, a_r$ is a regular sequence in the local ring ${\cO}_{W,w}$ (this follows from \cite{NE}, 10.2).

(b) A $T$-hypersurface of $W$ is an effective Cartier divisor $H$ of $W$ such that,  for all $w \in H$, the ideal $I(H)_w$ is generated by an element $a$ of ${\cO}_{W,w}$ where $\{  a  \}$ is  a partial $T$-regular system of parameters. Such a scheme $H$ is smooth over $T$ (see 
\cite{NE}, 11.2).

(c) Let $E=(H_1, \ldots, H_m)$ be a finite sequence of relative effective Cartier divisors of $W$ ({\it relative}  means that the divisor is flat over $T$). We say that $E$ has (or $H_1,\ldots,H_m$ have) {\it normal crossings, relative to $\pi$} (or to $T$, if $\pi$ is clear) if,   
 for any  point $w \in H_1 \cup \cdots \cup H_m$, there 
is a $T$-regular system of parameters  
$a_1, \ldots, a_n$ of ${\cO}_{W,w}$  such that, 
  for each
$H_i$ containing $w$, the ideal  $I(H_i)_w$ is generated by a suitable element $a_{j}$, $j \in \{1, \ldots, n \}$. In particular, each $H_i$ is a $T$-hypersurface.

(d) A closed subscheme $C$ of $W$ is said to have normal crossings with $E$ (relative to $T$)  if 
 for any  point $w \in C$, there 
is a $T$-regular system of parameters  
$a_1, \ldots, a_n$ of ${\cO}_{W,w}$ (see \ref{V:ne2}) such that 
 the stalk $I(C)_w$ is generated by ($a_1, \ldots, a_r) {\cO}_{W,w}$ (for some $r \leq n$) and for each
$H_i$ containing $w$, the ideal  $I(H_i)_w$ is generated by a suitable element $a_{j}$, $j \in \{1, \ldots, n \}$. In particular $C$ is locally defined by a regular sequence (see \cite {NE}, Lemma 11.2).  The possibility that $E = \emptyset$   is not excluded.  

It can be proved  that the induced projection $C \to T$ is smooth, and  the
blowing-up $W_1$ of $W$ with center $C$ is also $T$-smooth (this follows from  \cite{NE}, 11.2).   

\end{voi}

Now we recall a basic result on blowing-ups which involves regular sequences. A proof can be seen in \cite{NE}, Proposition 11.7.
 \begin{pro}
 \label{P:bu}
 Let $f:X \to T$ be a morphism of schemes, $C \subset X$ a closed subscheme, flat over $T$, $J=I(C)$, such that $J_x \subset {\cO}_{X,x}$ is generated by a regular sequence , for all $x \in C$. Let $T' \to T$ be a morphism, 
 $X'=X \times _T T'$, $p:X' \to X$  the natural projection, $C'=C \times _T T'=p^{-1}(C)$, $X_1 \to X$ and $X_1' \to X_1$ the blowing-ups with centers $C$ and $C'$ respectively. Then, there is a natural isomorphism $X_1' = X_1 \times _X X'$ 
 \end{pro}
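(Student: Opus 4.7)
The plan is to reduce to an affine local calculation and then exhibit a natural isomorphism of the two graded $\cO_{X'}$-algebras whose $\mathrm{Proj}$'s compute $X_1 \times_X X'$ and $X_1'$. Writing $J = I(C)$, one has $X_1 = \mathrm{Proj}_{\cO_X}\bigl(\bigoplus_{n \ge 0} J^n\bigr)$, so $X_1 \times_X X' = \mathrm{Proj}_{\cO_{X'}}\bigl(\bigoplus_{n \ge 0} J^n \otimes_{\cO_X} \cO_{X'}\bigr)$, while $X_1' = \mathrm{Proj}_{\cO_{X'}}\bigl(\bigoplus_{n \ge 0} (J\cO_{X'})^n\bigr)$. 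The multiplication maps give a surjection of graded $\cO_{X'}$-algebras $\bigoplus_n J^n \otimes_{\cO_X} \cO_{X'} \twoheadrightarrow \bigoplus_n (J\cO_{X'})^n$, and the whole problem is to show this is an isomorphism.

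I would first reduce to $X = \Spec A$, $T = \Spec R$, $T' = \Spec R'$, with $A' = A \otimes_R R'$, and, shrinking $X$ further, assume $J = (f_1, \ldots, f_r)$ with $(f_1, \ldots, f_r)$ a regular sequence in $A$. The central lemma is then that $A/J^n$ is flat over $R$ for every $n \ge 1$. Because $(f_1, \ldots, f_r)$ is a regular (in particular quasi-regular) $A$-sequence, one has the classical identification
\[
\bigoplus_{n \ge 0} J^n/J^{n+1} \;\cong\; \Sym_{A/J}(J/J^2),
\]
with $J/J^2$ free of rank $r$ over $A/J$. So each graded piece $J^n/J^{n+1}$ is a free $A/J$-module, hence $R$-flat since $A/J$ is $R$-flat by hypothesis. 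Induction on $n$ via the short exact sequences $0 \to J^n/J^{n+1} \to A/J^{n+1} \to A/J^n \to 0$ then yields the flatness claim.

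With this flatness in hand, applying $- \otimes_R R'$ to the short exact $R$-module sequence $0 \to J^n \to A \to A/J^n \to 0$ preserves exactness, so $J^n \otimes_R R' \to A'$ is injective with image $J^n A' = (JA')^n$. Equivalently, $J^n \otimes_A A' \to (JA')^n$ is an isomorphism for every $n$. Assembling these into a graded isomorphism and taking $\mathrm{Proj}_{\cO_{X'}}$ gives $X_1 \times_X X' \xrightarrow{\sim} X_1'$ as schemes over $X'$. Gluing over an affine cover of $X$ produces the global natural isomorphism.

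The main obstacle is the identification $\bigoplus_n J^n/J^{n+1} \cong \Sym_{A/J}(J/J^2)$ for an ideal locally generated by a regular sequence, which is the crucial input from commutative algebra and is exactly where the regular-sequence hypothesis is used; without it, the conclusion would fail for general non-flat base changes $T' \to T$. Once this is granted, the propagation of $R$-flatness up the $J$-adic filtration, the compatibility of the powers $J^n$ with the (possibly non-flat) base change $R \to R'$, and the passage to $\mathrm{Proj}$ are essentially formal.
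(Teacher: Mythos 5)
Your argument is correct: the flatness of each $A/J^n$ over $R$, obtained from the quasi-regularity isomorphism $\bigoplus_n J^n/J^{n+1}\cong\Sym_{A/J}(J/J^2)$ together with the hypothesis that $A/J$ is $R$-flat, is exactly what makes $J^n\otimes_A A'\to (JA')^n$ an isomorphism, and the passage through the Rees algebras and $\mathrm{Proj}$ is then formal. The paper itself gives no proof of this proposition (it cites Proposition 11.7 of \cite{NE}), but your route is the standard one for flat base change of blowing-ups along regularly embedded, $T$-flat centers, and I see no gap in it.
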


\begin{Def}
\label{D:fa}
A {\it family of basic objects} in $\cS$ (or an $\cS$-family of basic objects) parametrized by a scheme $T$  is a four-tuple  
$$(1) \quad \cB = (\pi : W \to T,I,b,E)$$  
where $\pi$ is a smooth, surjective  morphism of schemes, $I$ is a $W$-ideal, $b$ is an integer and $E=(H_1, \ldots, H_m)$ is a finite sequence of relative effective Cartier divisors of $W$ having normal crossings  (relative to $T$) such that for all $t \in T$ the fiber $I^{(t)}:=I{\cO}_{W^{(t)}}$ is a never-zero $W^{(t)}$ ideal and $E^{(t)}=(H_1^{(t)}, \ldots, H_m^{(t)})$ is a sequence of distinct regular  hypersurfaces of  $W^{(t)}$ (necessarily having  normal crossings). 

For all $t \in T$ we may define the fiber at $t$: 
${[\cB]}^{(t)}:=(W^{(t)},I^{(t)},b, E^{(t)})$ (as well as the geometric fiber $[\cB]^{(\bbar t)}$), which is a basic object in $\cS$.  Often    we shall   write $B^{(t)}$ to          denote the fiber ${[\cB]}^{(t)}$. 

A family of basic objects parametrized by $T$ will be also called 
a {\it basic object over $T$}, or a $T$-{\it basic object} or, if $T = \Spec A$ ($A$ a ring) an $A$-{\it basic object}.

If $\cB$ is as above, $W$ is the {\it underlying scheme } of the family $\cB$, we denote it by $us(\cB)$. The dimension of $\cB$, dim ($\cB$), is the dimension of the scheme $us(\cB)$.
\end{Def}

\begin{voi}
\label{V:ne1} In case $T$ is regular (and hence also $W$ is so, because $\pi$ is smooth) we may rephrase Definition \ref{D:fa} as follows.  
 A family of basic objects in $\cS$  parametrized by a regular scheme $T$  is a four-tuple $\cB = ({\pi : W \to T},I,b,E)$  
where $\pi$ is a surjective smooth morphism of schemes, $I$ is a never-zero $W$-ideal, $b$ is an integer and $E=(H_1, \ldots, H_m)$ is a finite sequence of hypersurfaces (of the regular scheme $W$, see \cite{BEV}, Definition 2.1)  having normal crossings  such that for all $t \in T$ the fiber $I^{(t)}:=I{\cO}_{W^{(t)}}$ is a never-zero $W^{(t)}$ ideal, and $E^{(t)}=(H_1^{(t)}, \ldots, H_m^{(t)})$ is a sequence of distinct regular  hypersurfaces of  $W^{(t)}$ with normal crossings. 

In this case, the four-tuple $B=(W ,I,b,E)$ is a basic object (in the sense of \ref{V:a3.0}), called the {\it basic object associated to $\cB$} and denoted by $aso(\cB)$.

\end{voi}

\begin{voi}
 \label{V:loc}
{\it Localization}. Let $\cB$ be a family (as in \ref{D:fa}), $t \in T$, $R= {\cO}_{T,t}$, $S=\Spec (R) $ and $f:S \to T$ the natural morphism sending the closed point $s$ of $S$ into $t$. By pull-back, $\cB$ induces an $S$-basic object $\cB _S$, called the {\it localization} of $\cB$ at $t$. Note that we may identify ${[\cB]}^{(t)}={[\cB _S]}^{(s)}$. This allows us to assume in many discussions  on fibers, without loss of generality, that the fiber is taken at a closed point of $T$.   
\end{voi}

\begin{voi}
 \label{V:ne1.1} 
(a) With the notation of \ref{D:fa}, 
  if $C \subset W$ is the irreducible closed subscheme of $W$ defined by the $W$-ideal
$J \subset {\cO}_W$, we shall say that the order of $I$ along
$C$  is $\geq m$,
written
$\nu (I,C) \geq m$, if $I \subseteq J^m$. 
 If $m$ is the largest interger that works, we write $\nu (I,C) = m$. For $C$ reducible,  
$\nu(I,C):=\max \{\nu (I,D): D$ is an irreducible component of $C\}$. 
\smallskip

(b) A {\it permissible center} of the $T$-basic object (1) (or a $\cB$-center, or just a $T$-center, if $\cB$ is clear) is a closed subscheme $C$ of $W$ which  has normal crossings with $E$ (relative to $T$) and such that for every  irreducible component $D$ of $C$,
$\nu(I,D) = \nu(I^{(t)},D^{(t)}) \ge b$ for all $t \in T $. 

\smallskip
For $C$ irreducible as above, the inequality $\nu(I^{(t)},C^{(t)}) \ge \nu(I,C) $ is always valid. If $C$ satisfies  simply the equality 
$\nu(I,C) \ge b$, we might call call $C$ a {\it weakly permissible center}. We shall not use this concept in this article. 
\end{voi}

\begin{voi}
\label{V:ne6}

If $\cB=(\pi:W \to T,I,b,E)$ is an $T$-basic object,  $C$ a $T$-permissible center for $\cB$ and $q: W_1 \to W$ is the blowing-up of $W$ with center $C$, then (generalizing \ref{V:a5.0}) we may consider several $W_1$-sheaves induced by ${I}$, namely: (i) $I'_1={I}{\cO}_{W_1}$ (the {\it total transform} of $I$ to $W_1$), (ii) $I_1:= {\mathcal E}^{-b} I'_1$, where ${\mathcal E}$ defines the exceptional divisor of $\pi$ (the {\it controlled transform} of $I$ to $W_1$), 
 (iii) $\overline {{I}_1} := {\mathcal E}^{-a}{I'}_1$, with $a$ as large as possible (the {\it proper transform} of ${I}$.) If $C$ is not connected, the exponent $a$ is constant along $p^{-1}(C')$, for each connected component $C'$ of $C$, but not necessarily globally constant.
 
 The four-tuple  
${\cB}_1 := ({\pi}_1:W_1 \to S,{I}_1,b,E_1)$, where ${\pi}_1=\pi q$, $I_1$ is the controlled transform of $I$ and 
$E_1=(H'_1, \ldots,H'_{m},H'_{m+1})$, with $H'_{m+1}$ the exceptional divisor and $H'_{i}$ the strict transform of $H_i$ (defined by 
$(I(H'_{m+1}))^{-1}I(H_i){\cO}_{W_1}$), $i=1, \ldots,m$,  is a new $T$-basic object,  called the {\it transform} of the $T$-basic object ${\cB}$ with center $C$. The process of replacing a basic object $\cB$ by its transform ${\cB}_1$ (with a $T$-permissible center $C$, as above) will be called the {\it transformation} of $\cB$ with center $C$, indicated by 
$\cB \leftarrow \cB_1$, if $C$ is clear. Sometimes we let $\cT (\cB,C)$  denote the transform of $\cB$ with center $C$.

 One may verify that the transform of a $T$-basic object with a $\cB$-permissible center over $T$ induces  the transform of the  fiber $B^{(t)}$ with center $C^{(t)}:=C \cap W^{(t)}$, for all $t \in T$.
 Moreover, working with proper transforms, 
${\overline {{I}_1}}^{(t)}=
 \overline {{I^{(t)}}_1}$, for all $t \in T$. 

A sequence of $T$-basic objects  $\cB _0 \leftarrow \cdots \leftarrow \cB _r$ is $T$- {\it permissible} if    
    $\cB _{i+1}=\cT (\cB _i,C_i)$, where $C_i$ is a $\cB _i$-center, $i=0, \ldots, r-1$. 

\smallskip

Such a   sequence is called an 
{\it equiresolution} of ${\cB}_0$ if $\sg ([{\cB}_r]^{(t)})= \emptyset$, for all $t \in T$.

\end{voi}

\begin{voi}
\label{V:ne2}
Consider a family of basic objects $\cB = (\pi : W \to T,I,b,E)$,  
with $T$ (and hence $W$) regular. Let $B =(W,I,b,E)$  be the basic object associated to the family $\cB$ and  
 $$(1) \quad B=B_0 \leftarrow \cdots \leftarrow B_r$$
(we write $B_i=(W_1,I_i,b,E_i)$) the algorithmic resolution of $B$, obtained via resolution functions $g_0, \ldots, g_{r-1}$, which gives us resolution centers $C_i := {\rm {Max}}(g_i) \subset W_i $, $i=0, \ldots, {r-1}$. Let ${q_i:W_i \to W_0}=W$ be the composition of the induced blowing-up morphisms 
$f_i: W_i \to W_{i-1}$,   $\pi _i:=\pi {q_i}:W_i \to T$ and   $p_i:C_i \to T$  the restriction of $\pi _i$ to $C_i$, $i=1, \ldots, r$. For any point (or geometric point) $t$ of $T$
 let 
 $$(2)_t \quad  \quad B^{(t)}=(B^{(t)})_0 \leftarrow \cdots \leftarrow (B^{(t)})_{r_t}   $$
 be the algoritmic resolution of the fiber $B^{(t)}$, obtained via resolution functions $g^{(t)}_0, \ldots, g^{(t)}_{{r_t}-1}$, which determine  algorithmic resolution centers $C^{(t)}_i={\mathrm {Max}}(g^{(t)}_i)$, $i=0, \ldots, {r_t}-1$.
 
Throughout the remainder of this section we shall use the notation just introduced.
\end{voi} 

\begin{voi}
  \label{V:R}
Given a family $\cB$ as above, if $x \in \sg(B)$ then $x \in \sg(B^{(t)})$ (with $t=\pi(x), \, B=aso(\cB)$). The converse is not always true. For instance, we have the following example (see also Examples \ref{E:san1} and \ref{E:san2}). 

\smallskip

\noindent
{\it Example}. Consider the $T$-basic object  $B=(W \to T,I,3,\emptyset)$, where $W \to T$ corresponds to the inclusion of polynomial rings (over a field $k$) 
$k[t] \subset k[t,x]$, and $I$ to the principal ideal $(tx+x^3)$. If $B^{(0)}$ is the closed fiber, then 
$\sg(B)=\emptyset$ but $\sg(B^{(0)})=V(x) \not= \emptyset$. 

\smallskip

So, we introduce:

 {\it Condition (R)}.    A family $\cB$, as in \ref{V:ne2}, satisfies condition ($R$) if, for all $t \in T$, $x \in W^{(t)}$, we have 
$x \in \sg(B) $ if and only if $x \in \sg(B^{(t)})$.

This means: if $\nu _x (I) < b$ then $\nu _x (I^{(t)})<b$, for all $x \in W^{(t)}$, $t \in T$. 
\end{voi}

\begin{voi}
\label{V:prep}
 Here is a situation where $T$-regular systems of parameters (\ref{V:ne0})  appear naturally. With the assumptions and notation of \ref{V:ne2}, consider the sequence (1) and assume the induced projection $p_0:C=C_0 \to T$ is smooth. Since $C$ is regular, given $w \in C$  we may find a regular system of parameters $a_1, \ldots , a_d $  of ${\cO}_{W,w}$  such that, for some $n \le d$, the elements 
 $a_1, \ldots, a_n$
 induce a regular system of parameters of 
 ${\cO}_{{W^{(t)}},w}={\cO}_{W,w}/r({\cO}_{T,t}){\cO}_{W,w}$ (with $t=\pi(w)$); moreover $a_1, \ldots,a_r$, for some $r<n$, generate $I(C)_w \subset {\cO}_{W,w}$ 
 and $I(H)_w=(a_i)$ for each hypersuface $H$ in $E$ containing $w$, for a suitable $i$.  Thus, $a_1, \ldots,a_r$ are part of a $T$-regular system of parameters and $C$ has normal crossings with $E$, relative to $T$.  
 
The fact that $a_1, \ldots,a_r$ is a regular sequence implies: if 
 ${\cT}(B,C)=B_1=(W_1,I_1,b,E_1)$, then $\pi _1:W_1 \to T$ is again smooth (see \cite{NE}, Proposition 10.5). It is easily checked that 
 ${\cB}_1=  (\pi _1:W_1 \to T,I_1,b,E_1)$ is a $T$-basic object. If the induced projection $ C_1 \to T$ is again smooth, we may repeat the process. Thus, if $C_i \to T$ are smooth, $i=0, \ldots n$, from the sequence (1) we obtain  we obtain $T$-basic objects 
${\cB}_i=(\pi _i :W_i \to T, I_i, b, E_i)$, $i=0, \ldots, n.$

This observation and  
 \ref{V:R} 
 motivate the following notion.
\end{voi}

\begin{Def}
\label{D:ae}
{\it Conditions ($A_n$)}. Here we use the notation of \ref{V:ne2} and \ref{V:prep} and  we assume $T$ is regular. 
 A family of basic objects $\cB$, whose associated basic object $B$  has the sequence (1) of \ref{V:ne2} as algorithmic resolution, satisfies condition $A_n$, $0 \le n < r$ if, for $i=0, \ldots, n$,  (i) the induced morphism $p_i:C_i \to T$   is smooth and surjective and 
(ii)  $\cB _i$ satisfies condition $(R)$, $i=0, \ldots, n$.
\end{Def}

\begin{rem}
\label{R:us1}
(a) If ${\cB}=(W \to T, I, b, E)$ is a family of basic objects (as in \ref{V:ne2}), then its restriction to a suitable open dense subset of $T$ is 
 such that all the projections $C_i \to T$ (notation of \ref{V:ne2}) will be smooth. This is a consequence of  the Generic Smoothness Theorem (\cite{H}, page 272). Using it,  we find an open $U$ in $T$ such that over $U$ all the necessary projections are smooth.
 
(b)  If ${\cB}$ is a family as in (a), then its restriction to a suitable open dense subset of $T$
 will satisfy condition ($R$). To check this fact we may assume that $T$ is irreducible. We may find 
an affine open set $U$ of $T$, $U= \Spec (D)$, with $D$ a suitable integral domain, and a covering of 
${\pi}^{-1}(U)\setminus {\sg B} \subset W$ 
by affine open sets 
$W'_i = \Spec (G_i)$, $i=1, \dots, q$, with the following properties: 
($i$) $G_i$ is a finitely generated $D$-algebra, say $G_i=D[x_1, \ldots, x_n]$, such that the generators $x_1, \ldots,x_n$ induce on 
${\cO}_{W,z}$ a $T$-regular system of parameters , for each closed point $z \in W'_i$ (we use the fact that $\pi$ is smooth), 
($ii$) for each $i$, the restriction of $I$ to $W'_i$ corresponds to an ideal 
$(f_1, \ldots, f_m)G_i$ so that 
$f_1=\sum a_{\alpha} x_1^{\alpha _1}\cdots {x_n}^{\alpha _n}$,
$a_{\alpha} \in D$ for all $ \alpha$,  where for some coefficient 
$a_{\beta (i)} \not= 0$, $\beta (i)=(\beta _1, \ldots, \beta _n)$ we have 
$\beta _1 + \cdots + \beta _n < b$.

Let $V_i=U \setminus V(a_{\beta (i)})$ and $V=\bigcup_{i=1}^{i=q} V_i$. Then $V$ is a dense open set in $T$ 
 satisfying the requirements.

(c) As a consequence of (a) and (b), there is an   open dense subset $G$ of $T$ such that the restriction of $\cB$ to $G$ satisfies condition $A_{r-1} $ (and hence ($A_n$), for all $0 \le n <r$).

\end{rem}

\begin{voi}
\label{V:ne4} 
Sometimes, given a family $\cB$  we want to compare values of the resolution functions $g_i$ of $B$ (the basic object associated to $\cB$)  and those of the resolution functions $g^{(t)}_i$ of the fiber $B^{(t)}$ at $t \in T$. A problem is that if $x \in W^{(t)} \subset W$ then  $g_i(x) \in \Lambda^{(d)}$ while 
$g^{(t)}_i(x) \in \Lambda^{(d_t)}$ (where $d$ and $d_t$ are the dimensions of $W$ and $W^{(t)}$ respectively). We may circumvent this difficulty as follows (see \cite{EN}, 1.13 (3)).

If $B_0=(W_0 \to \Spec \,( k),I_0,b,E_0)$ is a basic object in $\cS$, consider the basic object $B_0[n]=(W_0[n],I_0[n],b,E_0[n])$ of \ref{V:a8}. Recall that  $W_0[n]:=W_0 \times_{k}{\bf A}^{n}_{k}$, let $p:W_0[n] \to W_0$ be the first projection. 
Note that the fiber $p^{-1}(x)$, $x \in W_0$, is isomorphic to 
${\bf A}^{n}_{k(x)}$. Take the algorithmic resolution    
$$(1) \quad B_0 \leftarrow B_1 \leftarrow \cdots \leftarrow B_s$$
 of $B_0$, obtained by means of resolution functions 
 $h_0, \ldots , h_{s-1}$.   This resolution induces by pull-back via $p$ a resolution  
$$(2) \quad B_0[n] \leftarrow (B_0[n])_1 \leftarrow \cdots \leftarrow (B_0[n])_s$$
of $B_0[n]$. A feature of the V-algorithm  we are using is that (2) is the algorithmic resolution of $B_0[n]$ (having resolution functions 
$h_{n,0}, \ldots,h_{n,s-1} $). Note that $us((B_0[n])_i)=W_i\times_{k}{\bf A}^{n}$, $W_i=us(B_i)$. Let 
$p_i:W_i\times_{k}{\bf A}^{n} \to W_i$ be the first projection. 

Now, if $x \in W_i$, write $h_i[n](x):=h_{n,i}(\tilde x)$, where $\tilde x$ is the generic point of the fiber 
${p_i}^{-1}(x) \cong {\bf A}^{n}_{k(x)}$. Thus, by replacing the value $h_i(x)$ by $h_i[n](x)$, we have a natural way to view, by means of this ``shifting'' process, the values of the resolution functions of $B_0$ as elements of 
${\Lambda}^{(n+d)}$ ($d=\dim \, W_0$). 

So, to compare  values of $g_i$ and $g^{(t)}_i$ we use instead $g_i$ and $g^{(t)}_i[d-d_t]$ respectively. Often, to simplify, when the meaning is clear from the context we simply write $g^{(t)}_i(x)$ to denote $g^{(t)}_i[d-d_t](x)$.
\end{voi}

\begin{voi}
\label{V:np} 
 The algorithm we are using (i.e., the V-algorithm) has the following property. We use the notation and assumptions of \ref{V:ne2}, so \ref{V:ne2} (1) is the algorithmic resolution of $B$.  Then if $x \in \sg (B_{i+1})$ and $f_i(x) \notin C_i$, we have $g_{i+1}(x) = g_i(f_i(x))$. 
From this the following statement is proved. Let $0={\alpha}_{i,1} < \cdots < {\alpha}_{i, {m_i}}$ be the set of values of the resolution function $g_i$ and 
 $S_{i,m_j}=\{x \in \sg(B_i): g_i(x)={\alpha}_{i,m{_j}}\}$ (which is a regular locally closed subscheme of $W_i$). Then, if $x \in S_{i,j}$, there is an index $s \ge i$, a point $x' \in \sg (B_s)$, neighborhoods $U$ of $x$ (in $S_{ij}$) and $V$ of $x'$ (in $C_s$) respectively, such that the natural map $W_s \to W_i$ induces an isomorphism 
 $V \stackrel{\sim}{\rightarrow} U$.
 
 From this observation it follows that if all the centers $C_i$ (in \ref{V:ne2} (1)) are smooth over $T$, then all the projections $S_{i,j}\to T$ are smooth.
 
 \end{voi}

\begin{voi}
\label{V:ct} 
 
 We review some results from  \cite{EN}, Section 6. 

(a) Consider a family $\cB$, as in \ref{V:ne2}, with $T$ regular, let $B=B_0=(W_0,I_0,b,E_0):=aso({\cB})$. Let $t $ be a closed point of $T$, consider the algorithmic resolutions (1) and $(2)_t$ of \ref{V:ne2}. We know that the fiber $B^{(t)}$ is a new basic object. Alternately, we express this fact by saying that $\cB$ is {\it 0-compatible with the algorithm at $t$} (or just {\it 0-compatible at t}). 
 If, moreover,  (i) $C_0$ is transversal to $W_0^{(t)}$ and (ii) $C_0^{(t)}=C_0 \cap W_0^{(t)}$, we say that $\cB$ is 1-compatible with the algorithm at $t$. 
In this case, the blowing-up $W^{(t)}_1$ of $W^{(t)}_0$ with center $C_0^{(t)}$ may be identified to the fiber at $t$ of the blowing up $W_1$ of $W_0$ with center $C$. Indeed, by (i) at each point $x \in C_0 \cap {W_0}^{(t)}$ the projection $C_0 \to T$ is smooth, hence $I(C_0)_x$ is generated by a regular sequence, so we may use Proposition \ref{P:bu}. If  $C_1$ is transversal to $W_1^{(t)}$ and  $C_1^{(t)}=C_1 \cap W_1^{(t)}$, we say that $\cB$ is 2-compatible with the algorithm at $t$.   Iterating, we define the notion ``$\cB$ is $s$-compatible with the algorithm at $t$, for  any index $s$ such that $0 \le s \le r$''. We say that $\cB$ is $s$-compatible with the algorithm (or just $s$-compatible, if this is clear) if for all $t \in T$ the localization of $\cB$ at $t$ is $s$-compatible with the algorithm at the closed point. 

If $\cB$ is $s$-compatible with the algorithm at $t$ and, in addition and after identifications already discussed, $\sg (B_j) \cap W_i^{(t)}=\sg(B_i^{(t)}), \, j=0, \ldots, s$, we say that $\cB$ is {\it strongly $s$-compatible} with the algorithm at $t$. If this holds for every $t \in T$ we say that $\cB$ is {\it strongly $s$-compatible}. 

(b) The proof of Theorem (6.4) of \cite{EN} shows that if $\cB$ (as above) is $q$-compatible with the algorithm at a closed point $t \in T$ and $x \in W^{(t)}_r \cap \sg(B_r)$ ($ W^{(t)}_r$ is identifiable to a closed subscheme of $W_r$), then 
$g_q (x)\le g^{(t)} _q (x)$ with equality if and only if  
$S_{q,j}\cap W_q^{(t)}$ is transversal at $x$ (see \ref{V:np}). If $\cB$ is strongly $r$-compatible then the same formulas hold, more generally, for every $x \in W^{(t)}_r$ (see \ref{R:Ay}) 

From this (using the observations of \ref{V:loc}) similar results follow at each point $t \in T$, closed or not, provided $\cB$ is compatible (or strongly compatible) with the algorithm. 
\end{voi}

\begin{rem}
\label{R:Ay} The statement of Theorem 6.4 in \cite{EN} is not correct. The given proof shows that the conclusion is valid only if we assume that $x \in \sg(B_r) \cap W_r^{(t)}$.  The examples that follow, due to S. Encinas, show that  
the claimed equality may fail if $x \notin \sg(B_r)$. If we add the condition that $\sg(B_r) \cap W_r^{(t)} = \sg(B_r^{(t)})$ (that is, strong compatibility) then the equality is valid as stated (since 
$g_r(x)=0 \Leftrightarrow x \notin \sg(B_r) $, similarly at the fiber at $t$). 

In case $b=1$ in our basic object, the condition $\sg (B_r) \cap W_r^{(t)}=\sg (B_r^{(t)})$ is automatic. Indeed, then $x \notin \sg(B_r) \Leftrightarrow J_x={\cO}_{W,x} \Leftrightarrow 
J_x^{(t)} = {\cO}_{W_r^{(t)}}    \Leftrightarrow        x \notin \sg (B_r^{(t)})$. So, if $b=1$, Theorem 6.4 of \cite{EN} is correct as stated. This is the only case where that theorem is applied in the article \cite{EN}. Indeed, it is used for objects of the form $(W,J,1,E)$, which correspond to families of ideals (see section \ref{S:IV}). So, this necessary change in the statement of Theorem 6.4 does not affect the results of \cite{EN}.
\end{rem}

\begin{exa}
\label{E:san1} Let $\cB=(W \to T, J, 2, \emptyset)$, where $W \to T$ is induced, by taking spectra,  from the inclusion of polynomial rings 
$k[t] \subset k[t,x,y]$
 (where, say, $k$ is the complex numbers), $I=(x^2 + t y^2)$. Then, with $0$ the origin $(t)$ of $T={\bf{A }}^{1}$, 
$B^{(0)}= (\Spec (k[x,y],(x^2),2, \emptyset)$, $\sg (B^{(0)}=V(x)$, $\sg (B)=V(x,y)$. Any closed point $z=(t,x,y)=(0,u,0)$ with $u \not= 0$ is in $\sg(B^{(0)})$ but not in  $\sg(B)$. Hence, $g_0(z) =0$ but $g^{(t)}_0(z) > 0$, although $\cB$ is (vacuously) 0-compatible with the algorithm.
\end{exa}
  
\begin{exa}
\label{E:san2}
  Let $\cB=(W \to T, J, 2, \emptyset)$, where $W \to T$ is as in \ref{E:san1} and $J$ corresponds to the ideal $(x^2y^2+tx^4)$. Here, 
$B^{(0)}= (\Spec (k[x,y],(x^2y^2),2, \emptyset) $. The $T$-basic object $\cB$ is 1-compatible. Indeed, 
$\max (g_0)= \max(g^{(t)}_0) =( 4/2,0,1,0, \infty)$ and we have $C_0=\ma (g_0)=V(x,y) \subset \Spec (k[t,x,y] = W$, while  
$C^{(t)}_0 = \ma(g^{(t)}_0)=V(x,y)$. So, 
$C^{(t)}_0 = C_0 \cap W^{(t)}_0$, the intersection being transversal. 

Transform with center $C_0$. Working in the relevant affine open of  $W_1$ (the blowing-up of $W$) and using, to simplify, still $t, x, y$
 to denote the coordinates, we have the controlled transforms 
$J_1 = y^2(x^2+t y^2)$ and $J^{(0)}_1=(y^2x^2)$; here $y$ defines the exceptional divisor. Then 
$\sg(J_1,2)=V(y)$, $\sg(J^{(t)_1})=V(x) \cup V(y)$. If 
$z$ (lying over $t \in T$) is in $  V(x) \setminus V(y)$ then $g^{(t)}(z)=(2,0, \infty) > 0$ but $g_1(z)=0$. 
\end{exa}

\begin{voi}
 \label{V:ct1}
Here we discuss  situations that insure compatibility with the algorithm. Consider a family $\cB$ as in \ref{V:ct}, whose notation we retain. 

(a) As mentioned, $\cB$ is always $0$-compatible. Assume now 
$\cB$ satisfies condition $A_0$. Then $\cB$ satisfies $(R)$ and hence, for all $t \in T$, 
$\sg(B) \cap W ^{(t)}= \sg (B^{(t)})$ and $\cB$  is strongly 0-compatible. Also, $C_0$ is smooth over $T$, which is equivalent to saying that, for all $t \in T$, $C_0$ is transversal to the fiber $W^{(t)}$. By the strong $0$-compatibility   that we have, \ref{V:ct} (b) implies that 
$g_0(x)=g^{(t)}_0$, for each $x \in W^{(0)}$. It easily follows that  
$\ma (g_0) \cap W_0^{(t)}= \ma(g_0^{(t)})$, i.e., $C_0 \cap W_0^{(t)}=C_0^{(t)}$. Thus (if $r > 1$) we have 1-compatibility. Assume now  that ($A_1$) holds. i.e., $C_1$ is smooth over $T$, and (in the notation of \ref{V:prep}) $\cB _1$ satisfies condition $R$. Then we also have strong 1-compatibility.  
 Proceeding as above, we see that
$g_1(t)=g^{(t)}_1(x)$ for each $x \in W_1^{(t)}$ and $C_1 \cap W_1^{(t)}=C_1^{(t)}$; moreover if $r >2$ we have 2-compatibility. If    $(A_2) $ (and hence $(R)$) holds, we have strong 2-compatibility. Repeating this process we see  that if $\cB$ satisfies condition $(A_{q})$ then,
 for  $i=0, \ldots, q$, 
 we have strong $i$-compatibility.  Hence,  
$g_i(x)=g^{(t)}_i(x)$ for each $x$ lying over $t$
 and   $C_i \cap W_i^{(t)}=C_i^{(t)}$,  $i=0, \ldots, q$. 

(b) Similarly, assume $\cB$ is such that 
$g_0(x)=g^{(t)}_0(x)$  for each $x \in W^{(t)}_0$. By \ref{V:ct} (b), this implies that $C_0$ is transversal to $ W^{(t)}_0$ 
 and hence that $C_0$ is  smooth over $T$. As in (a), we see that $C^{(t)}_0 =C_0 \cap W^{(t)}_0 $, hence $\cB$ is 1-compatible. If 
$g_1(x)=g^{(t)}_1(x)$  for each $x \in W^{(t)}_1$, again $C_1$ is smooth over $T$ and, as in (a), we have 2-compatibility. Repeating, and using th identifications of \ref{V:ct} (a), it makes sense to write 
$g_i(x)=g^{(t)}_i(x)$  for each $x \in W^{(t)}_i$,  $i < r$ and, if this is valid, then $C_i$ is smooth over $T$ for all $i$. 

(c) The discussion of (a) together with  \ref{R:us1} (c) show that a $T$-basic object $\cB$  as above (with $T$ irreducible) is always compatible with the algorithm at the generic point of $T$.  

\end{voi}

\begin{Def}
\label{D:f} {\it Conditions ($F_n$)}. A family $\cB$ (as in \ref{V:ne2}) satisfies condition ($F_n$) (where  $0 \le n <r$) if 
for all 
$x \in {W_i}^{(t)}$, 
 we have 
$g_i(x)=g^{(t)}_i(x)$, $0 \le i \le n$. Here we use the conventions of \ref{V:ne4} (i.e. we view $ g^{(t)}_i$ as a function with codomain $\Lambda ^{(d)}$,  $d= \dim (\cB)$, for all $i$) and the remarks of \ref{V:ct1} (b). 
\end{Def}

The discussion of \ref{V:ct1} may be summarized as follows.

\begin{pro}
\label{P:AnFn} Conditions $(A_n)$ and $(F_n)$ are equivalent.
  \end{pro}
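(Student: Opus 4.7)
The plan is to prove both implications simultaneously by induction on $n$, since at each step one needs the previous step of the \emph{other} condition in order to use \ref{V:ct} (b). The key technical tool throughout is the inequality/equality criterion of \ref{V:ct} (b): if $\cB$ is $q$-compatible with the algorithm at $t$ and $x \in \sg(B_q) \cap W_q^{(t)}$, then $g_q(x) \leq g_q^{(t)}(x)$, with equality exactly when the stratum $S_{q,j}$ containing $x$ is transversal to $W_q^{(t)}$; moreover, under strong $q$-compatibility the same comparison extends to all of $W_q^{(t)}$ (via \ref{R:Ay}).

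For $(A_n) \Rightarrow (F_n)$, I would argue exactly as sketched in \ref{V:ct1} (a). The base case $n=0$: condition $(R)$ gives $\sg(B) \cap W^{(t)} = \sg(B^{(t)})$, so $\cB$ is strongly $0$-compatible at every $t \in T$; smoothness of $C_0 \to T$ translates into transversality of $C_0$ with every fiber $W^{(t)}$, so all strata $S_{0,j}$ are transversal to the fibers (by \ref{V:np}, since $C_0$ is itself one of the $S_{0,j}$, and the others fit into the same smooth picture). Hence the equality clause of \ref{V:ct} (b) applies and gives $g_0(x) = g_0^{(t)}(x)$ for all $x \in W^{(t)}$. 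For the inductive step, assume $(A_{n}) \Rightarrow (F_{n})$ up to index $n-1$ and suppose $(A_n)$ holds. By induction $(F_{n-1})$ holds, hence in particular $C_{n-1} \cap W_{n-1}^{(t)} = \ma(g_{n-1}) \cap W_{n-1}^{(t)} = \ma(g_{n-1}^{(t)}) = C_{n-1}^{(t)}$; combined with smoothness of $C_{n-1}\to T$ given by $(A_{n-1})$ and Proposition \ref{P:bu}, this identifies $W_n^{(t)}$ with the fiber at $t$ of the blowing-up $W_n \to W_{n-1}$, giving $n$-compatibility. Condition $(R)$ for $\cB_n$ then upgrades this to strong $n$-compatibility, and again $(A_n)$ gives smoothness of $C_n \to T$, hence transversality of the strata $S_{n,j}$ to every fiber, hence equality in \ref{V:ct} (b): $g_n(x) = g_n^{(t)}(x)$ on all of $W_n^{(t)}$.

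For $(F_n) \Rightarrow (A_n)$ I would run the analogous induction as in \ref{V:ct1} (b). Base case: suppose $g_0(x) = g_0^{(t)}(x)$ for every $x \in W^{(t)}$. Taking $x \notin \sg(B)$ (so $g_0(x)=0_d$) forces $g_0^{(t)}(x)=0_d$, i.e.\ $x \notin \sg(B^{(t)})$; since the reverse inclusion $\sg(B) \cap W^{(t)} \subseteq \sg(B^{(t)})$ is automatic, condition $(R)$ for $\cB$ follows. Next, on $\sg(B) \cap W^{(t)} = \sg(B^{(t)})$ the equality $g_0 = g_0^{(t)}$ and the equality clause of \ref{V:ct} (b) imply that every stratum $S_{0,j}$ meets $W^{(t)}$ transversally; applied to the top stratum this gives $C_0 \cap W_0^{(t)} = C_0^{(t)}$ with transversal intersection. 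Since this holds at every $t$, \ref{V:np} (or directly the standard criterion that a closed subscheme of a $T$-smooth scheme transversal to every geometric fiber is $T$-smooth) yields that $C_0 \to T$ is smooth; surjectivity follows because $C_0^{(t)} \neq \emptyset$ at every $t$ where $\sg(B^{(t)}) \neq \emptyset$, which is every $t$ (otherwise the resolution of $B^{(t)}$ would be shorter than that of $B$, contradicting the length compatibility encoded in the equality $g_0=g_0^{(t)}$). This gives $(A_0)$, and Proposition \ref{P:bu} again produces the identification $W_1^{(t)} \cong (W^{(t)})_1$ needed for the inductive step to apply.

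The inductive step for this direction is a direct repetition with $B$ replaced by $B_n$ and with the already-established $(A_{n-1})$ providing the $n$-compatibility that makes \ref{V:ct} (b) applicable at level $n$. The main obstacle I anticipate is not the logic of the induction but the careful bookkeeping of the identifications $W_i \cap W_i^{(t)} = W_i^{(t)}$ (and $C_i \cap W_i^{(t)} = C_i^{(t)}$) at each level, which must be justified by Proposition \ref{P:bu} together with the smoothness of $C_i \to T$ obtained at the previous stage; without these identifications the comparison $g_i$ versus $g_i^{(t)}$ is not even literally meaningful. The dimension-shifting convention of \ref{V:ne4} is also used tacitly throughout to make the values of $g_i$ and $g_i^{(t)}$ lie in a common ordered set.
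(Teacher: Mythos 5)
Your argument is the paper's own: Proposition \ref{P:AnFn} is proved there simply by summarizing the discussion of \ref{V:ct1}(a)--(b), and your two inductions reproduce that discussion step for step, with the welcome extra detail that condition $(R)$ for $\cB_i$ is extracted from $(F_i)$ by evaluating $g_i=g^{(t)}_i$ at points outside $\sg(B_i)$ (a step \ref{V:ct1}(b) leaves implicit).

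The one place where you go beyond the paper --- surjectivity of $p_0:C_0\to T$, which is part (i) of Definition \ref{D:ae} --- is also where your argument has a genuine gap. You assert that $\sg(B^{(t)})\neq\emptyset$ for every $t$ because otherwise ``the length compatibility encoded in $g_0=g^{(t)}_0$'' would be violated; but $(F_n)$ carries no length information (that is clause (ii) of condition $(F)$ in \ref{D:fe}, which is \emph{not} part of $(F_n)$). The pointwise equality $g_0=g^{(t)}_0$ on $W^{(t)}$ is vacuously compatible with $\sg(B)\cap W^{(t)}=\sg(B^{(t)})=\emptyset$, and even when $\sg(B^{(t)})\neq\emptyset$ it does not force $\max(g_0)$ to be attained on the fiber $W^{(t)}$, i.e.\ it does not force $C_0\cap W^{(t)}\neq\emptyset$. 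The paper's own Example \ref{E:bad} illustrates this: there both $g_0$ and $g^{(0)}_0$ vanish identically on the fiber over $t=0$ (and the family is locally a product over $t\neq 0$), yet $C_0\to T$ misses the point $t=0$. So surjectivity genuinely does not follow from $(F_n)$; the equivalence is literally true only if the word ``surjective'' is dropped from \ref{D:ae} (which is evidently how the paper uses it --- its proofs of $(F)\Rightarrow(A)$-type implications, here and in \ref{P:AFC} and \ref{P:tau}, establish smoothness of the $p_i$ and never surjectivity). Apart from this clause, your proof is on exactly the same footing as the paper's, including the shared reliance on \ref{V:ct}(b) and \ref{V:np} to pass from smoothness of the centers to transversality of all the strata $S_{i,j}$.
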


\begin{voi}
\label{V:ne6.1} 
Here is an important example of a $T$-permissible sequence (see \ref{V:ne6}). We work with the notation and assumptions of \ref{V:ne2}. In particular $T$ is smooth, $B$ is the basic object associated to $\cB$, (1) of \ref{V:ne2} is  the algorithmic resolution of $B$. Assume 
$\cB$ satisfies condition $(A_{q-1})$, in particular 
 the projections $p_i:C_i \to T$ are smooth, for $0 \le i < q < r$. Let $\cB _i = (\pi _i : W_i \to T, I_i,b, E_i)$. We claim that each $C_i$ is a $\cB _i$-center, $i=0, \ldots, q-1$. Thus using these centers we obtain a $T$-permissible sequence 
$\cB _0 \leftarrow \cdots \leftarrow \cB _q$. If $q=r$, this is called the {\it T-sequence associated to the resolution sequence of $B$}. 

To verify this assertion, all we must check is that $\nu(I_i,D)=\nu(I_i^{(t)},D^{(t)})\ge b$, for each irreducible component $D$ of $C_i$, $i=0, \ldots, q-1$, $t \in T$. 
 If $i=0$, this follows from the the fact that $g_0(x)=(\o _0(x),n_0(x))=(\nu _x (I) /b,n_0(x))$ (similarly for  $g_0^{(t)}(x)$) and that 
$g_0(x)=g_0^{(t)}(x)$ for all $x \in W_0 ^{(t)}$, $t=\pi (x)$. Indeed, this is true because $(A_n)$ (with $n=q-1$) holds, and hence (by \ref{P:AnFn}) $(F_n)$ is also valid. For $i > 0$, one uses again the equality $g_i(x)=g_i^{(t)}(x)$ (a consequence of $(F_n)$ and Proposition 
5.3 of \cite{NE}, which relates the transforms $I_i$ and ${\bar{I_i}}$.
\end{voi}

\begin{rem}
\label{R:C}
The assumptions and notation are those  of 
\ref{V:ne2}. For all $t \in T$, we always have $p_0^{-1}(t) = C_0 \cap W^{(t)}$ (scheme-theoretic intersection), where $p_0:{C_0 \to T}$ is the induced projection.  Suppose that, in addition,  
   ${C_0 \cap W^{(t)}}=C^{(t)}_0$ (the zero-th  center of the algorithmic resolution of the fiber $B^{(t)}$), for all $t \in T$.
    We claim that this implies that  $C_0$ is flat over $T$.
    
    To verify this statement, by the fact that both $C_0$ and $T$ are regular, by \cite{M}, page 179  this will be true if for every $t \in T$, the dimensions of  the generic fiber of the projection $p_0:C_0 \to T$  and of    $p_0^{-1}(t)$    agree.  Now, $p_0^{-1}(t)=C_0 \cap W^{(t)}$, by our assumption this equals $C^{(t)}_0$. Moreover, by the regularity of the center $C^{(t)}_0$, this intersection is transversal and so, by \ref{V:np}, $g^{(t)}_0(w) = g_0(w)   $, for any point $w \in C^{(0)}_0$ (where 
  $g^{(t)}_0$ and $  g_0   $ are zeroth-resolution functions of $ B^{(t)}$ and $B$ respectively). Now, since by \ref{R:us1} (a) the restriction of $\cB$
to a suitable non-empty open set of $T$ satisfies condition ($A_{r-1}$), and hence, by \ref{P:AnFn}, condition ($F_{r-1}$), the zeroth-resolution function $g^{(u)}_0$ of the generic fiber $W^{(u)}$ satisfies 
$g^{(u)}_0(z)=g_0(z)$, for any $z \in  C_0 \cap W^{(u)}$ ($u$ is the generic point of $T$). Since $g_0$ is constant along $C_0 = {\rm {Max}}(g_0)$, $g^{(t)}(w)=g^{(u)}(z)$. By a property of the V-algorithm, the fiber $p_0^{-1}(t)=C^{(t)}_0$ and the generic fiber $p_0^{-1}(u)=C_0 \cap W^{(u)}=C^{(u)}_0$ are equidimensional, of the same dimension. Thus, the projection 
$C_0 \to T$ is flat at any point of $p_0^{-1}(t)$. Since $t$ is arbitrary,  $p_0: C_0 \to T$ is flat.

Let $\cB _1={\cT}(\cB,C_0)$ be as in \ref{V:prep} (hence $B_1$ of \ref{V:ne} (1) is the associated basic object to the family $\cB_1$).   
Then, for all $t \in T$, by  \ref{P:bu} (in case $T' \subset T$ is the inclusion of the point $t \in T$)  there is an identification of $(B^{(t)})_1$ and  $[{\cB} _1]^{(t)}$ (the fiber of $\cB _1$ at $t$). In particular, $W^{(t)}_1=us((B^{(t)})_1$) may be identified to $\pi _1 ^{-1}(t)$  (a subscheme of $W_1$) and $p_2 ^{-1}(t) = C_1 \cap W^{(t)} _1$. Assume 
  $C_1 \cap W^{(t)} _1 = (C^{(t)}) _1$, the first resolution center in the sequence  $(2)_t$ in \ref{V:ne}.
  Reasoning as above, we conclude that $C_1 \to T$ is flat. So, if $\cB _2 = \cT(\cB _1, C_1)$, then $({B}^{(t)})_2$ can be identified to $(B_2)^{(t)}$, for all $t \in T$,  and so on. 
  
  With these identifications the following definition  makes sense:
  \end{rem}
  
\begin{Def}
\label{D:cn} {\it Conditions ($C_n$)}.  A family $\cB$ (as in \ref{V:ne2}) satisfies condition ($C_n$) (with $0 \le n <r$) if, for all $t \in T$, in the sequence $(1)$ of \ref{V:ne2}  we have  $C_i \cap W^{(t)}_i=C^{(t)}_i$, $i=0, \ldots, n $, where $ C^{(t)}_i$ is the $i$-th center used in the resolution $(2)_t$ of \ref{V:ne2} (identified to a subscheme of $us({\cB}_i)$ as in \ref{R:C}). Here the intersection is in the scheme-theoretic sense, hence equal to $p_i^{-1}(t)$. 
\end{Def}

\section{Some notions of algorithmic equiresolution}

\label{S:BE}
\begin{voi}
\label{V:ne}  
Throughout this section we make the following assumptions:

(i)  all the schemes are in the class $\cS$ (see \ref{V:a2.0}). 

(ii) {\it The algorithm} will mean the V-algorithm of resolution for basic objects in $\cS$ (or, equivalently, the W-algorithm, see \ref{V:a13.0}, \ref{V:a14.0} and \ref{P:igua}). 

(iii) All the families of basic objects that we shall consider will be parametrized by a regular scheme.

We work throughout with a family 
${\cB}=(W \to T, I, b, E)$, $T$ regular, with associated object $B$.  We shall use the algorithmic resolutions $(1)$ and $(2)_t$ of   \ref{V:ne2}.

We intend to introduce several possible notions of algorithmic equiresolution on such a family $\cB$.  
\end{voi}

\begin{Def}
\label{D:A}
{\it Condition ($A$)}.  We say that $\cB$ satisfies condition ($A$) if:  
 ($i$) $\cB$ satisfies condition $A_{r-1}$  (see  \ref{D:ae}, $r$ is as in (1) of \ref{V:ne2}) and 
  ($ii$) $\sg (B^{(t)}_r) = \emptyset$, for all $t \in T $.

Part ($ii$) is equivalent to the assertion that $\cB _r$ satisfies condition ($R$) (see \ref{V:R}).
\end{Def}

\begin{voi}
\label{V:ne3}
 Part $(ii)$ of the previous definition does not follow from $(i)$. For instance, take 
$\cB = (W \to T, (tx+x^3),3, \emptyset)$, with $W= \Spec ({\bf C}[t,x])  $, $T=\Spec ({\bf C}[t])$.  Since $\sg (aso(\cB)) =\emptyset$, part ($i$) is vacuously satisfied here, but $\sg (B^{(0)}) \not= \emptyset$.

\begin{Def}
\label{D:fe} {\it Condition ($F$)}. A family $\cB$  satisfies condition ($F$) if: ($i$) ${\cB}$ satisfies condition ($F_{r-1}$) 
(see \ref{D:f})    and ($ii$) for all $t \in T$, in $(1)$ and $(2)_t$ of \ref{V:ne2} we have $r=r_t$.
\end{Def}

\begin{Def}
\label{D:c} {\it Condition ($C$)}.  A family $\cB$  satisfies condition ($C$) if: ($i$) it satisfies condition $C_{r-1}$ (see \ref{D:cn}) and ($ii$)  $r=r_t$ for all $t \in T$ (notation of \ref{V:ne2}). 
\end{Def}

\begin{pro}
 \label{P:AFC} Conditions ($A$), ($F$) and ($C$) are equivalent.
\end{pro}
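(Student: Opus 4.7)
The plan is to establish $(A) \Leftrightarrow (F)$ by combining Proposition \ref{P:AnFn} with a direct comparison of the two termination clauses, and then to prove $(A) \Leftrightarrow (C)$, where the forward direction is immediate from the strong-compatibility discussion of \ref{V:ct1}(a) and the reverse requires a separate verification of condition $(R)$ via a descent along the sequence $(1)$ of \ref{V:ne2}.

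First, for $(A) \Leftrightarrow (F)$: Proposition \ref{P:AnFn} already identifies parts $(i)$, so it suffices to compare the termination clauses under the common hypothesis $(A_{r-1}) = (F_{r-1})$. In this situation, \ref{V:ct1}(a) shows that $\cB$ is strongly $i$-compatible with the algorithm for every $i \le r-1$; in particular, each projection $C_i \to T$ is surjective and $C_i \cap W_i^{(t)} = C_i^{(t)}$, so $C_i^{(t)} \ne \emptyset$ for $i < r$, which forces $r_t \ge r$ for every $t \in T$. Since the algorithmic resolution of $B^{(t)}$ terminates at step $r_t$, one has $r_t = r$ if and only if $\sg(B_r^{(t)}) = \emptyset$, and this is precisely the equivalence of clause $(ii)$ of $(A)$ with clause $(ii)$ of $(F)$.

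For $(A) \Rightarrow (C)$, the same strong-compatibility discussion supplies, for $i = 0, \ldots, r-1$, the scheme-theoretic equalities $C_i \cap W_i^{(t)} = C_i^{(t)}$; indeed, both sides are reduced closed subschemes of $W_i^{(t)}$ sharing the same underlying set (the left because $C_i \to T$ is smooth, the right because $C_i^{(t)}$ is a permissible center of an $\cS$-basic object, hence regular). Combined with $(ii)$ of $(A)$ this yields $(C)$.

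The reverse implication $(C) \Rightarrow (A)$ is the main obstacle. Iterative application of Remark \ref{R:C} first shows that each $p_i: C_i \to T$ is flat; being a regular scheme, flat over the regular base $T$ with regular fibers $C_i^{(t)}$, it is in fact smooth, and surjective because $r_t = r > i$ forces $C_i^{(t)} \ne \emptyset$. The delicate point is the verification of $(R)$ for each $\cB_i$, and the plan is to argue by contradiction along the resolution sequence: suppose some $x \in \sg(B_i^{(t)}) \setminus \sg(B_i)$ existed. Then $x \notin C_i$ (since $C_i \subseteq \sg(B_i)$), hence $x \notin C_i \cap W_i^{(t)} = C_i^{(t)}$; therefore $W_{i+1} \to W_i$ is an isomorphism near $x$ and, by Proposition \ref{P:bu} applied to $(C_i)$, so is $W_{i+1}^{(t)} \to W_i^{(t)}$ near $x$. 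Since no exceptional divisor meets the common lift $x_{i+1}$, the orders of the controlled transforms are preserved: $\nu_{x_{i+1}}(I_{i+1}) = \nu_x(I_i) < b$ while $\nu_{x_{i+1}}(I_{i+1}^{(t)}) = \nu_x(I_i^{(t)}) \ge b$. Iterating this lifting produces a point $x_r \in W_r^{(t)}$ lying in $\sg(B_r^{(t)})$, contradicting $\sg(B_r^{(t)}) = \emptyset$ (which follows from $r_t = r$). This yields $(R)$ for each $\cB_i$, completing $(A_{r-1})$; clause $(ii)$ of $(A)$ is just $r = r_t$.
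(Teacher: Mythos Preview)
Your proof is correct and follows the paper's overall structure closely: the equivalence $(A)\Leftrightarrow(F)$ via Proposition~\ref{P:AnFn} plus comparison of termination clauses, the passage $(A)\Rightarrow(C)$ via the strong-compatibility discussion of \ref{V:ct1}(a), and the smoothness of each $p_i$ in $(C)\Rightarrow(A)$ via Remark~\ref{R:C} together with flatness plus regular fibers, are all handled as in the paper.

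The one genuine difference is your verification of condition $(R)$ for each $\cB_i$ in the direction $(C)\Rightarrow(A)$. The paper argues via the stratification of \ref{V:np}: a hypothetical point $x\in\sg(B^{(t)})\setminus\sg(B)$ lies in some stratum $S_{0,\alpha}^{(t)}$ with $\alpha>0$, and the property recorded in \ref{V:np} produces an index $j$ and a point $x_j\in C_j^{(t)}\setminus C_j$, contradicting $(C)$ directly. Your argument instead lifts the bad point step by step along the resolution sequence, using that away from the centers the blow-ups are isomorphisms (so orders are preserved), until you reach a point of $\sg(B_r^{(t)})$, contradicting $r_t=r$. Your route is more elementary in that it avoids the stratification machinery of \ref{V:np} and handles all $\cB_i$ uniformly in one pass; the paper's route is shorter once \ref{V:np} is available, since it jumps immediately to the relevant center rather than iterating to level $r$. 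Both are valid.
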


\begin{proof}
 We already saw (in \ref{P:AnFn}) that $(A_{r-1}) \Leftrightarrow (F_{r-1})$ (notation as in \ref{V:ne2}). So, if ($A$) holds, part (i) of condition ($A$) implies that ($F_{r-1})$ is valid. Since $x \in \sg(B^{(t)})$ if and only if $g^{(t)}_{0}(x) > 0$, this implies that the length $r_t$ of the algorithmic resolution of the each fiber $B^{(t)}$ must satisfy $r_t \ge r$, for all $t \in T$. But (ii) in condition ($A$) implies that $r_t$ must be exactly $r$, for all $t \in T$. Thus, part ($ii$) in Condition ($F$) also holds.

Conversely ($ii$) in condition ($F$) clearly implies that  $\sg (B_r^{(t)})=\emptyset$, i.e., ($ii$) in condition ($A$). This proves the equivalence of ($A$) and ($F$)

\smallskip

{\it The implication $(F) \Rightarrow (C)$}. Use the notation of \ref{V:np}. 
 By \ref{V:ct} we see from our hypothesis that $C_0 =S_{0,m_0} \subset W_0$ is smooth over $T$. Moreover, for any $t \in T$, the intersection $C_0 \cap W^{(t)}$ is transversal, hence this is regular scheme (in particular reduced). Since  $C^{(t)}$ is also regular, to check the equality of schemes $C_0 \cap W^{(t)} = C^{(t)}$ it suffices to check the equality of underlying sets, and this easily follows from our assumption ($F$). Since $C_0$ is smooth over $T$, the basic object $B_1$ induces, as in 
\ref{V:prep},
 the family ${\cB}_1=\cT(\cB,C_0)$, and we may use the same argument as before to get $C_1 \cap W^{(t)}_1 = C^{(t)}_1$ and the smoothness of $C_1 $ over $T$. Iterating, we see that condition ($C$) holds.

\smallskip

 {\it The implication $(C) \Rightarrow (A)$}. We prove first that the morphism $p_0:C_0 \to T$ is smooth. Since both $T$ and $C_0$ are schemes over a field of characteristic zero it suffices to show that: (a) $p_0$ is flat and (b) the fibers are regular schemes. Since both $C$ and $T$ are regular schemes, to verify the flatness it suffices to verify that all the components of non-empty closed fibers have the same dimension (see \cite{M}, page  179).
 But for  a closed point $t$ of $ T$, $p_0^{-1}(t)=C_0 \cap W_0^{(t)}$ (scheme-theoretic intersection), and by assumption this is equal to $C^{(t)}_0$ (the zero center of the algorithmic resolution $(2)_t$ of \ref{V:ne2}). We know that this is regular, so (b) above is verified. Moreover, the regularity of the intersection forces $C_0$ and $W_0^{(t)}$ to meet transversally at each common point. Thus, by \ref{V:np}, for all $x \in C^{(t)}_0$, $g^{(t)}_0(x)=g_0(x)$, which must be equal to max ($g_0$). But it is known that the dimension of  the component of 
 $C^{(t)}_0$ containing $x$ is determined by the value $g^{(t)}_0(x)$. Thus, the equality just gotten implies that all the components of fibres $p_0^{-1}(t)$ have the same dimension, i.e., (a) above is valid. So, $p_0$ is smooth. 
 
  As in 
 \ref{V:np}, the basic object $B_1={\cT}(B,C_0)$ (in (1) of \ref{V:ne2}) determines a family 
 ${\cB}_1=(W_1 \stackrel{\pi _1}{\rightarrow} T,I_1,b,E_1)$, with $\pi _1$ the composition 
 $W_1 \to W \stackrel{\pi}{\rightarrow} T$. We may repeat the argument above (using the equality 
 $p_1^{-1}(t)=C_1 \cap W_1^{(t)}=C^{(t)}_1$, with $p_1$ the restriction of $\pi _1$ to $C_1$) to obtain the smoothness  
  of $p_1$. Iterating, by the assumed validity of condition ($C$), we get: $p_i$ is smooth, 
  $i=0, \ldots, r-1$. 

Now we check that $\cB$ satisfies condition $(R)$. Let $x \in W \setminus \sg (B)$, $t= \pi (x)$. We must show: 
$x \in  W^{(t)} \setminus \sg (B^{(t)})$. We may assume $t$ is a closed point. Let 
$x \in \sg (B^{(t)})$, i.e., $x \in S_{i \alpha} ^{(t)}$, $\alpha > 0$ (notation of \ref{V:np}). There is an index $j$ and a point $x_j \in W_j$ such that, locally, $S_{i \alpha} ^{(t)}$ is isomorphic to the algorithmic center $C_j ^{(t)} \subset W_j$ (near $x_j$). But, were 
$x \notin \sg(B)$ then $x_j $ is not in $\sg (B_j)$ and hence $x_j \in C_j \subseteq \sg (B_j)$. Thus  
$C_j \cap W_j ^{(t)} \not= C_j^{(t)}$, contradicting condition ($C$). Thus,   by \ref{V:ne6.1},        $C_0$ is a permissible $T$-center. In a similar way, we prove the $\cB _1 = \cT (\cB, C_0)$ satisfies condition $(R)$. Iterating, we see that 
${\cB}_i$ satisfies condition ($R$), $0 \le i < r$. So, this discussion, together to the previous one, shows that condition ($A_{r-1}$), i.e., part ($i$) of (A), holds.
 Part ($ii$) of condition ($A$) is clear.
 Thus, ($C$) $\Rightarrow$ ($A$) is proved.
\end{proof}

In the next condition we need certain auxiliary functions (see \cite{EN}, 2.2). Given a family $\cB$ (as in  \ref{V:ne}), if $t \in T$ let $c_i(t)$ denote the number of irreducible components of $C^{(\bbar t)}_i$ (the $i$-th  center of the algorithmic resolution of the geometric fiber $B^{(\bbar t)}$). Write 
 $${\tau}_{\cB}(t) := ({\rm max}(g^{(\bbar t)}_0), c_0(t), \ldots, {\rm max}(g^{(\bbar t)}_{r_t}), c_{r_t}(t), \infty, \infty, \ldots) $$
where $\infty$ is the maximum of $\Lambda ^{(d_t)}, ~ d_t=\dim \, W^{(t)}$.
 \end{voi}

\begin{Def}
\label{D:tau} {\it Condition $\tau$}. A family of basic objects $\cB$ parametrized by $T \in \cS$ (not necessarily regular) satisfies condition $\tau$ if the function ${\tau}_{\cB}$ is locally constant. 

\smallskip
Since $\tau _{\cB}$ is upper-semicontinuous, the requirement in condition $\tau$ simply means that $\tau _{\cB}$, restricted to each connected component of $T$, is constant.   
\end{Def}

\begin{rem}
\label{R:nume}
 If $q:C \to T$ is a smooth, proper morphism of noetherian schemes, with $T$ integral, then $c(t)$, the number of irreducible components of the geometric fiber at $t \in T$, is independent of $t$. A proof of this result is found in \cite{EN}, (2.5) (iii).
\end{rem}

\begin{pro}
 \label{P:tau} Let $\cB$ be a family, assume all the projections $C_i \to T$ are proper morphisms. Then $\cB $ satisfies condition (A) if and only if it satisfies condition $(\tau)$.

\end{pro}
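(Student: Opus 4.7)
The direction $(A) \Rightarrow (\tau)$ is the easier one. My plan is to invoke the strong $r$-compatibility with the algorithm that condition $(A)$ provides: by \ref{V:ct1}(a) together with Proposition \ref{P:AFC}, for every $t \in T$ one has $r_t = r$, $C_i \cap W_i^{(t)} = C_i^{(t)}$, and $\max(g_i^{(t)}) = \max(g_i)$ for $i = 0, \dots, r-1$, so the first coordinates of $\tau_{\cB}(t)$ are independent of $t$. The remaining coordinates $c_i(t)$ are locally constant because $p_i \colon C_i \to T$ is smooth (by $A_{r-1}$) and proper (by hypothesis), which is exactly the situation of Remark \ref{R:nume}.

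For the converse $(\tau) \Rightarrow (A)$, I would reduce at once to the case $T$ connected, so that $\tau_{\cB}$ is actually constant. By Remark \ref{R:us1}(c) there is a dense open $G \subseteq T$ over which $\cB|_{G}$ satisfies $(A_{r-1})$; the direction already proved shows that $\tau_{\cB|_G}$ is the constant $(\max(g_0), c_0, \dots, \max(g_{r-1}), c_{r-1}, \infty, \infty, \dots)$, where $r$ is the length of the resolution of the associated object $B$. The assumption then forces $\tau_{\cB}(t)$ to equal this value for every $t \in T$, so $r_t = r$, $\max(g_i^{(\bar t)}) = \max(g_i)$, and $c_i(t) = c_i$ for all $t$ and all $i<r$.

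To conclude $(A)$, I plan to verify condition $(C)$ by induction on $i < r$ and then apply Proposition \ref{P:AFC}. Assuming $(C_{i-1})$, the family $\cB_i = \cT(\cB_{i-1}, C_{i-1})$ is well-defined (see \ref{R:C}) with fibers identified with the $B_i^{(t)}$, and the equality $C_i \cap W_i^{(t)} = C_i^{(t)}$ is known for $t \in G$. For arbitrary $t \in T$, properness of $p_i \colon C_i \to T$ makes its image closed and containing $G$, hence equal to $T$, so the fiber $p_i^{-1}(t) = C_i \cap W_i^{(t)}$ is non-empty. The inequality in \ref{V:ct}(b) (Theorem~6.4 of \cite{EN}, as corrected in Remark \ref{R:Ay}) together with the constancy $\max(g_i^{(\bar t)}) = \max(g_i)$ gives $(C_i)_{\bar t} \subseteq C_i^{(\bar t)}$ set-theoretically. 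Upper-semicontinuity of fiber dimension for the proper map $p_i$, compared against the constancy of $\dim C_i^{(\bar t)}$ (which is determined by $\max(g_i^{(\bar t)})$) and of the component count $c_i(t) = c_i$ supplied by $(\tau)$, then forces $(C_i)_{\bar t} = C_i^{(\bar t)}$ as sets, and hence (using that $C_i^{(\bar t)}$ is regular) as reduced closed subschemes of $W_i^{(\bar t)}$; finally, regularity of $C_i$ promotes this to a scheme-theoretic equality.

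The main obstacle I expect is the last step of this dimension/component bookkeeping: upgrading the set-theoretic containment $(C_i)_{\bar t} \subseteq C_i^{(\bar t)}$ to a full scheme-theoretic equality at an arbitrary (non-generic) point. This is precisely where the properness hypothesis on $p_i \colon C_i \to T$ is needed — it converts generic smoothness plus numerical constancy of $\tau$ into the uniform fibral behavior required by $(C)$, and hence by $(A)$.
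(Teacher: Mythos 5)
Your overall route coincides with the paper's: the forward direction is exactly the paper's argument (condition $(F)$ gives $r_t=r$ and the constancy of the $\max(g_i^{(t)})$, condition $(C)$ identifies $p_i^{-1}(\bar t)$ with $C_i^{(\bar t)}$, and Remark \ref{R:nume} handles the component counts), and for the converse the paper likewise proves $(\tau)\Rightarrow(C)$ by induction on $i$, using the generic open set of \ref{R:us1}, the containment $p_i^{-1}(\bar t)\subseteq C_i^{(\bar t)}$ coming from the sandwich $\max(g_i)=g_i(x)\le g_i^{(\bar t)}(x)\le\max(g_i^{(\bar t)})$, and a dimension-plus-component-count comparison closed off by properness.

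There is, however, a genuine gap at exactly the step you flag as the crux. Condition $(\tau)$ controls $c_i(t)$, the number of components of the algorithmic center $C_i^{(\bar t)}$ \emph{of the fiber}; it says nothing directly about the number of components of $p_i^{-1}(\bar t)=(C_i)_{\bar t}$. Your containment $(C_i)_{\bar t}\subseteq C_i^{(\bar t)}$ together with equidimensionality only shows that $(C_i)_{\bar t}$ is a union of \emph{some} of the $c_i$ components of $C_i^{(\bar t)}$; upper semicontinuity of fiber dimension cannot rule out that it misses several of them. The paper bridges this by first proving that $p_i:C_i\to T$ is smooth (its step $(b_0)$: for $x\in C_i$ the sandwich forces $g_i(x)=g_i^{(\bar t)}(x)$, and \ref{V:ct}(b) then gives transversality, hence smoothness), and only then applying Remark \ref{R:nume} to the smooth \emph{and} proper $p_i$ to conclude that the number of components of $p_i^{-1}(\bar t)$ equals its value at the generic point, which over the good open set $G$ equals $c_i$. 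You should insert this smoothness step and the appeal to \ref{R:nume}; without it the passage from the set-theoretic containment to equality does not go through. (The final upgrade from set-theoretic to scheme-theoretic equality is then fine, since smoothness makes $p_i^{-1}(\bar t)$ reduced and $C_i^{(\bar t)}$ is regular.)
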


\begin{proof}
{\it The implication $(A) \Rightarrow (\tau)$}. We know that ($A$) implies condition ($F$). Hence  $r_t = r$ for all $t \in T$ and the sequence $({\mathrm max}(g^{(t)}_0), \ldots, {\mathrm max}(g^{(t)}_r))$ is independent of $t \in T$. Now we have to check that the sequence 
 $(c_0(t), \ldots, c_r(t))$ is constant. 
  Since by condition ($C$) (valid, because it is equivalent to (A)) we have that $p_i^{-1}(t)=C_i^{(t)}$ for all $t \in T$. Then \ref{R:nume}  implies that $c_i(t)$ is independent of $t$. This proves the desired implication.

 {\it The implication $(\tau) \Rightarrow (A)$}. Since $(A) \Leftrightarrow (F)$, we prove that $(\tau) \Leftrightarrow (C)$. First we shall verify the following facts about the family $\cB _0$ (we use the notation of \ref{V:ne2}).
 \begin{itemize}
 \item [($a_0$)] max($g_0$)=max($g^{(t)}_0$), for all $t$ in $T$. 
 
 \item [($b_0$)] $p_0:C_0 \to T$ is smooth.
 
 \item [($c_0$)]$C_0$ intersects $W^{(t)}$ transversally, and $C_0 \cap W_0^{(t)}=C^{(t)}_0$, for all $t \in T$.
 \end{itemize}
 
 {\it Proof of $(a_0)$}. Let $z$ be the generic point of $T$ and $\alpha$ the common value of ${\mathrm {max}}(g^{(t)}_0)$, $t \in T$. 
 Since $\cB$ is compatible with the algorithm at the generic point $z$ (\ref{V:ct1}), by \ref{V:ct} (b) we have 
$\alpha = g_0(z)=\max \{ g_0(w): w \in W^{(z)} \} \le \max ({g_0})$. On the other hand, if $\max (g_0)$ is reached at $x \in E$ (with $t=\pi (x)$) we have: 
 $\max (g_0) = g_0(x) \le g^{(t)}_0 \le \max (g^{(t)}_0)= \alpha$. These inequalities imply $\max (g_0)= \alpha$, i.e., $(a_0)$.

 {\it Proof of $(b_0)$} By \ref{V:ne2}, for $x \in C_0$ we have $g_0(x) \le g^{(t)}_0(x)$. Since $C_0$ is the locus of maximum value of $g_0$, by $(a_0)$ both members of this inequality must be $= \alpha$, so 
$g_0(x) = g^{(t)}_0(x)$.
  Again by \ref{V:ct} (b),  $p_0$ is smooth at $x$. Since $x$ is arbitrary, $p_0 : C_0 \to T$ is smooth.

 {\it Proof of $(c_0)$} The transversality follows from the smoothness of $p_0$. Hence, 
 $C_0 \cap W^{(t)}$ is regular, a fortiori reduced. Also $C^{(t)}_0$
 is regular, hence reduced. Consequently, to establish 
the equality stated in $(c_0)$ we may proceed set-theoretically. The inclusion 
$C_0 \cap W_0^{(t)} \subseteq C^{(t)}_0$ easily follows from $(a_0)$. Note that 
$C_0 \cap W_0^{(t)} = p_0 ^{-1}(t)$. To show that we must have an equality, we may work with the geometric fiber, i.e., to show that 
$$ (1) \quad p_0^{-1}(\bbar{t})=C^{\bbar{t}}_0$$
with $\bbar{t}$ the geometric point of $T$ at $t$. Now, since for each point $x \in C_0$ we have 
$g_0(x)= g^{(t)}_0(x) = \alpha$, it follows that each irreducible (or connected) component of each side has the same dimension (see 
\cite{EN}, (1.2)(vi) and (1.15)). So, to show the equality (1) it suffices to show that the number $c'$ of components of 
$ p_0^{-1}(\bbar{t})$ equals the number $c$ of components of  $C^{\bbar{t}}_0$. Now, following \ref{R:us1} (a), restrict to an open set $U \subseteq T$ where $(\cB _0)_{|U}$ satisfies Condition (A). Note that the generic point $z$ is in $U$. Hence, by ($C$) (equivalent to (A)), $C^{(z)}_0=C_0 \cap W^{(z)}=p_0^{-1}(z)$. By \ref{R:nume}, by the properness of $p_0$, $c'=c_0(z)$. So, $c=c_0(t)=c_0(z)=c'$, as needed to prove $(c_0)$.
 
 Now consider ${\cB}_1= \cT({\cB}_0,C_0)$. The fiber $B_1(t)$ is identified to 
 $\cT(B^{(t)}_0, C_0 \cap W^{(t)})$ and hence, by $(c_0)$, to $\cT(B^{(t)}_0, C^{(t)}_0)$, i.e. the term $(B^{(t)})_1$  in the sequence $(2)_t$ of \ref{V:ne2}. So, we may state properties $(a_1)$, $(b_1)$ and $(c_1)$, analagous to $(a_0)$, $(b_0)$ and $(c_0)$, and prove them exactly as before. Iterating this procedure,  eventually we get that, in particular,  $(c_j)$ is valid for all $j$, i.e., (i) of condition ($C$). Part (ii) of ($C$) follows from the fact that $\max (g_i ^{(t)})$ is constant.  
 \end{proof}

 \begin{rem}
 \label{R:nosa} Equiresolution should be regarded as a form or equisingularity. Accordingly, one should expect that when a family (say, of basic objects) is equisolvable, then the different fibers would be ``equally complicated'', e.g., have ``the same'' algorithmic resolution. But in this sense, in general, Condition (A), or the ones equivalent to it, are not very satisfactory. If we fix a closed point $t=0 \in T$ and consider the fiber $B^{(0)}=(W^{(0)},I^{(0)},b,E^{(0)})$ of the basic family $\cB =(W \to T, I, b, E)$ at $0$, then we get an open neighborhood $U$ of $W^{(0)}$ in $W$ such that for all $t \in T$ (near $0$) $(B^{(t)})_{|U}$ has the ``same'' resolution as $B^{(0)}$. But, it might be the case that for all $t \not= 0$ in $T$ the whole fiber $B^{(t)}$ might be very different to  $B^{(0)}$. See the example that follows, where $B^{(0)}$ is non-singular (hence already resolved) but $B^{(t)}$ is not, for $t \not= 0$. 

Problems like those of this example cannot occur if all the projections $p_i :C_i \to T$ (notation of \ref{V:ne2}) are proper, as the equivalence (A) and ($\tau$) in this case indicates. So, following the present approach, it seems that the requirement 
  ``$p_i$ is proper for all $i$'' is important to insure satisfactory results. Of course, this requirement is satisfied under the assumption that $W$ is proper over $T$. 
 \end{rem}
 
 \begin{exa}
 \label{E:bad} In this example condition (A) is satisfied and  
  $B^{(0)}$ is non-singular (already resolved) while $B^{(t)}$ is not, for $t \not= 0$. In this case the projection $p_0:C_0 \to T$ is not proper. 
   Here, $k$ is a characteristic zero field, $R=k[x,y,t]$, a polynomial ring in three variables, $O$ the origin of ${\bf A}^{3}=\Spec (R)$ (so, $\{O\} = V(x,y,t)$), $W={\bf A}^{3} - \{O\}$, $T=\Spec (k[t])$, $\pi:W \to T$ the morphism induced by the inclusion $k[t] \subset R$, 
 $\cB = (\pi:W \to T, I,2,\emptyset)$, where $I$ is the restriction to $W$ of the sheaf of ideals (on ${\bf A}^{3}$) 
 defined by the ideal $(x^2-y^2)R$, $B=(W,I,2,\emptyset)$. Here, $\sg (B)=V(x,y)$ (the $t$-axis, restricted to $W$), write $C=\sg(B)$. If ${B}_1=\cT({B},C)$, 
 then $B \leftarrow {B}_1$ is the algorithmic resolution of 
 $B$. Clearly $C$ is smooth over $T$, hence $\cB$ satisfies condition (A). But $\sg (B^{(0)})=\emptyset$ (so, $B^{(0)}$ is resolved, where $0$ is the origin of ${\bf A}^{3}_{k(t)}$) while $\sg(B^{(t)})\not= \emptyset$, fot $t \not= 0$. 
  \end{exa}

\section{Condition ($E$)}

\label{S:CE} 
 
\begin{voi}
\label{V:ce2}

   We want to introduce another equiresolution condition, condition ($E$), which makes sense even for families whose parameter space is not reduced. Our approach uses the theory developed in \cite{NE}.

For the remainder of this article, the symbol ${\mathcal A}$ denotes the collection of artinian local rings $(A,M)$ such that the residue field $k=A/{M}$ has characteristic zero. Such a ring is necessarily a complete $k$-algebra.

A family of basic objects of the form 
$$(1) \quad \cB = (W {\stackrel{\pi}{\rightarrow}} S,I,b,E),~S=\Spec (A),~A \in \cA$$ 
  will be called an {\it infinitesimal family}.
Denote by  
 $B^{(0)}=(W^{(0)},I^{(0)},b,E^{(0)})$  its only fiber (so, $W^{(0)}$ is a smooth $k$-scheme, $k=A/M$). 

The main observation is that, given an infinitesimal   family,  sometimes  the algorithmic resolution of the fiber ``reasonably spreads'' to    a resolution of the family.
   When this is possible, we say that the family is algorithmically equisolvable.   Condition ($E$) requires that all the infinitesimal families naturally induced at the  points of $T$ be algorithmically equisolvable. We shall summarize  results of \cite{NE} that we need in order to make this idea precise.  Although in \cite{NE} one works exclusively with infinitesimal families, some of its results can be presented in greater generality. This fact will be useful in section 7.
\end{voi}

\begin{voi}
\label{V:ne7} We briefly indicate how several concepts discussed in section \ref{S:AL} for basic objects (over fields) can be adapted to $T$-basic objects, $T \in \cS$.
 
 \smallskip
 
 (a) In \ref{V:a4.0} we defined the $W$-ideal $\Delta(I)$ (or $\Delta (I/k)$),  for a basic object $B=(W,I,b,E)$, with $W$ a scheme (in $\cS$) over the field $k$. 
 Given a family $\cB$ as in \ref{D:fa}, we may similarly introduce a {\it relative}  sheaf ${\Delta}(I/T)$. Namely, we set    
 ${\Delta}(I/T):=  I+ {\mathcal F}_{d-1}({\Omega}_{{\nu}(I)/T}) $, where $\cF$ denotes {\it Fitting ideal}, $d$ is the relative dimension of $W$ over $T$, and ${\Omega}_{{\nu}(I)/T}$ is the sheaf of relative differentials. This is a $W$-ideal. By iteration we define the ideals 
${\Delta}^{i}(I/T)$. If $t$ is a closed point of $T$, $W^{(t)}$ the fiber of $W \to T$ at $t$, then ${\Delta}^{i}(I/T){\cO}_{W^{(t)}}$ can be identified to 
${\Delta}^{i}(I^{(t)}/k(t))$ (see \ref{V:a4.0}).

\smallskip

(b) {\it  W-equivalence.} Two $T$-basic objects $\cB=(W \to S, I, b, E)$ and $\cB'=(W \to S, J, c, E')$ are {\emph {pre-equivalent}} if the following conditions hold: $C \subset W$ is a $\cB$-permissible center if and only if it is a $\cB'$-permissible center, if $\cB_1=\cT(\cB,C)$ and $\cB'_1=
\cT(\cB',C)$ respectively then $C_1\subset us(\cB _1))=us(\cB _1 '))$ is a $\cB_1$-permissible center if and only if it is a $\cB'_1$ permissible center, and so on. We say that $B$ and $B'$ are {\emph {W-equivalent}} if they are pre-equivalent and, in addition, the  fibers 
$B^{(t)}$ and $B'^{(t)}$ are also pre-equivalent, for all $t \in T$ (see \cite{NE}, 4.8). 

\end{voi}

\begin{voi}
\label{V:ne8.1}
We continue discussing how to  adapt notions introduced in \ref{V:a10.0} and  \ref{V:a12.0} to  families of basic objects  
${\cB}=(\pi:W \to S,I,b,E)$, $S \in \cS$.
 
\smallskip

(a) We get a {\it relative} homonegeized ideal ${\cH}(I/S,b)$ (or just ${\cH}(I/S) $, if $b$ is clear) by using formula (1) in \ref{V:a10.0}, but with $\Delta^{i} (I)$ substituted by the relative $W$-ideal $\Delta^{i} (I/S)$, and $T= \Delta^{b-1} (I/S)$. 
 Thus we obtain a homogeneized family ${\cH}({\cB})$ associated to $\cB$.  The families $\cB$ and ${\cH}({\cB})$ are W-equivalent. 

\smallskip

(b) We define $S$-hypersurface, $S$-transversality to $E$, etc., as in \ref{V:a12.0} ($\alpha$) but now  using (partial) $S$-regular systems of parameters, in the sense of \ref{V:ne0}, instead of regular systems. We define ``a $S$-hypersurface is adapted to $\cB$ (or $\cB$-adapted, or just relatively adapted or $S$-adapted, if this is clear) as in \ref{V:a12.0}, replacing (A1) by ``$I(Z) \subseteq {\Delta}^{b-1}(I/S)$'', (A2) by ``$Z$ is $S$-transversal to $E$''. If, moreover, $Z$ satisfies the analogue of (A3) of \ref{V:a12.0}, it is called {\it inductive}. A family is {\it nice} if it admits a $S$-adapted hypersurface. The {\it relative coefficient ideal} ${\mathcal{C}}(I/S)$ is defined as in \ref{V:a12.0} ($\beta$), but now using the sheaves $\Delta^{i} (I/S)$ rather than $\Delta ^{i}(I)$. 

In a similar way we introduce the inductive $A$-object ${\cB}_{Z}=(Z \to S,{\mathcal C}(I/S,Z),b!,E_Z)$, as in \ref{V:a12.0} ($\beta$) (where now  $\cB$ is nice, $Z$ is a $S$-inductive hypersurface,  $E_Z$ consists of the intersections of each $H$ in $E$ with $Z$ and the morphism $Z \to S$ is induced by $\pi$).

\end{voi}

\begin{voi}
\label{V:ne8} In \cite{NE} other notions  studied in Section \ref{S:AL} in the case where we work over a field are extended  
 to the case of infinitesimal families. Actually, as we shall see, those results hold in a more general setting, for instance if  $\cB $ is an $A$-basic object,  $A$ a local ring such that 
$S=\Spec (A) \in \cS$. We let $0$ denote the closed point of $S$ and $B^{(0)}$ the 
closed (or {\it special}) fiber of $\cB$.  In this subsection, all the local rings we consider will be such that their spectra are in $\cS$ of  \ref{V:a2.0}. 

 (a) {\it Monomial objects}.  Let $A$ and $S$ be as above, we use the notation of \ref{V:a7.0} (i)).  An $A$-basic object     ${\cB}=(W \to S, I,b,E)$, where $E=(H_1, \dots, H_m)$, is {\it premonomial} if its  closed 
fiber $B^{(0)}$ is monomial. Let ${\Gamma}=(\Gamma_1,\Gamma_2,\Gamma_3) := {\Gamma}_{B^{(0)}} $. We say that  $B$ is {\it monomial} if it is premonomial and, letting
$(i_1, \ldots, i_p,0,0, \ldots) = {\rm max}~ (\Gamma_3)$, then
$C:= H_{i_1} \cap \cdots \cap H_{i_p}$ is a $B$-permissible center. This is called the {\it canonical center } of the monomial $A$-basic object ${\cB}$.

(b) {\it t-permissible centers}. Let  $A $ a local ring, consider  a sequence  $A$-permissible transformations of $A$-basic objects  
$$\quad (1) \quad  {\cB}_0 \leftarrow {\cB}_1 \leftarrow \cdots \leftarrow {\cB}_r$$
where we write  ${\cB}_j=(W_j \to S, I_j, b, E)$, for all $j$.
 We shall define, by induction on the length $r$, what it means that (1) is $t$-permissible, or simply a $t$-sequence.

If $r=0$ (i.e. there is just one basic object), the sequence (reduced to one object) is, by definition, $t$-permissible.

Next, assume the notion of $t$-permissible center is defined, by induction, if the sequence has length $\leq r$, in such a way that it  induces a sequence  of special fibers which is $t$-permissible, in the sense of  \ref{V:a7.0}(ii). We declare a sequence of length $r+1$ to be $t$-permissible if the following conditions $\alpha$ and $\beta$ hold. ($\alpha$) The $r$-truncation of (1) is $t$-permissible . Then
 by looking at special fibers we have functions 
 $\o _i$ and  $t_i$, $i=1, \ldots, r$ 
  satisfying 
${\rm max} ~(t_i) \geq {\rm max}~(t_{i+1}) $, $i=0, \ldots , r$.  
 Let $s$ be the largest index such that 
${\rm max}\,({\o}_s) = {\rm max} \,({\o}_r)$. Let $E_r^{-}$ consist of the hypersurfaces in $E_r$ which are transforms of those in $E_s$ and $(b_r/b,\bbar{n})= {\mathrm  max}(t_r)$. Then we demand: ($\beta$) any component $C$ of the center $C_r$ used to obtain $B_{r+1}$ satisfies:  
 $\nu (\bbar{I_r},C) =  \nu (\bbar{I^{(0)}_r},C^{(0)})= b_r$ and  for each closed point $y \in C$,  
 the number of hypersurfaces in $E^{-}_r$ containing $y$ is equal to $\bbar{n}$. 

If (1) is 
  a $t$-permissible sequence, each center $C_j$ used in it is said to be $t$-permissible for ${\cB}_j$, and by 
  the function $t _j$ of (1) we mean the function $t _j$ of the corresponding sequence of special fibers. 

  If (1) is a $t$-sequence, then the induced sequence of closed fibers is a $t$-sequence, in the sense of \ref{V:a7.0}. The $t$-functions of (1) are, by definition, the $t$-functions of the induced sequence of fibers (\ref{V:a7.0}).
    
  (c)  The sequence (1)   is called $\rho$-permissible if there is an integer $s \ge 0$ such that: 
(a) ${\cB}_0 \leftarrow \cdots \leftarrow {\cB}_s$  is $t$-permissible, (b) ${\cB}_j$ is monomial if $s \le j$ and,  for all such $j$, ${\cB}_j \leftarrow {\cB}_{j+1}$ is the transformation with the canonical center of ${\cB}_j$. In particular, it could be $s =r$, in this case the sequence is $t$-permissible.
\end{voi}

 \begin{voi}
 \label{V:ce3}

If $\cB =(W \to S,I,b,E)$ is a family of basic objects, $S=\Spec (A)$, $A \in \cA$,
$w \in W$ is a closed point, and 
 $a_1, \ldots, a_n$ are elements of ${\cO}_{W,w}$ inducing a regular system of parameters of ${\cO}_{W^{(0)},w}$, then the  completion $R^*$ of $\cO _{W,w}$ with respect to the ideal $(a_1, \ldots, a_n)$ is isomorphic to a power series ring  $R^*=A'[[x_1, \ldots,x_n]]$ (with $A'={\cO}_{W,w}/(a_1,\ldots,a_n)$  and  $ \Delta ^i (J/S)R^*$ is the ideal generated by elements of $I$ and their partial derivatives of order $\leq i$ (see \cite{NE}, Prop. 11.6). 
 
   If $C$ is an irreducible closed subscheme of $W$, in \ref{V:ne1.1} we introduced the integer $\nu (I,C)$.  In 
 3.9 and 3.10 of \cite{NE} it is proved that  $\nu (I,C) \ge b$ if and only if 
$\Delta ^{b-1}(I/S)_w \subseteq J_w$ for $w$ in a dense open subset of $C$.  
 Also, if $w$ a is closed point of $ C$, one has:  ${\Delta}^{b-1}(I/S)_w \subseteq J_w$ if and only if for every $f \in I_w$ the corresponding power series in 
 $I_wR^* \subset A'[[x_1,\ldots,x_n]]$ has order $\ge b$.
\end{voi}

\begin{voi}
\label{V:bdp}
 Here we assume $S=\Spec (A)$, $A$ a local ring. If (1) of \ref{V:ne8} is a $t$-permissible sequence of $A$-basic objects 
 one  defines, locally at each point 
$x \in \ma (t_r) \subset W^{(r)}= us({\cB}_r)$, 
 the associated nice $A$-basic object  ${{\cB}_r}''$ using essentially the same procedure as in \ref{V:a12.0} ($\beta$). Here are some properties of ${{\cB}_r}''$ (assuming, to simplify the notation, that it is globally defined): 
 (a) $({\cB}_r'')^{(0)} = ({\cB}_r^{(0)})''$. 
 (b)  A center $C$ for ${\cB}_r$ is $t$-permissible if and only if it is ${\cB}_r''$-permissible. 
 (c) Let $C \subset W_r$ be a ${\cB}_r$-center that is $t$-permissible, consider the transformations ${\cB}_r'' \leftarrow ({\cB}_r'')_1$ and 
${\cB}_r \leftarrow {\cB}_{r+1}$ with center $C$, and the  object ${\cB}_{r+1}''$ associated to ${\cB}_{r+1}$,  assume 
$\mathrm{max}(t_r) = \mathrm{max}(t_{r+1})$. Then, 
$({\cB}_r'')_1=({\cB}s_{r+1})''$. 
 (d) If $A_n=A/{r(A)}^{n+1}$ (a ring in the class $\cA$), $S_n = \Spec (A_n)$, 
${\cB}^{(n)}$ is the $A_n$ basic object induced by $\cB$, then ${{\cB}}''$ induces the associated $A_n$-basic object 
${{\cB}^{(n)}}''$.
 We leave it to the reader to precisely state and verify the analogous statement for the $S$-basic object ${{\cB}_r}''$ that appears in the sequence (1) of \ref{V:ne8}.

Statements (a), (b), (c) 
 are discussed in detail in \cite{EN}, section 8,  in case $A \in \cA$, but the extension to the situation where $A$ is just local is  straightforward. Statement (d) follows from the definitons.
\end{voi}

\smallskip
Condition ($E$) involves the use of certain auxiliary conditions ${\cE}_j$, introduced in \cite{EN}, that we review next.

\smallskip

\begin{voi} 
\label{V:ne9}  
{\it Conditions $\cE _j$}. Let $\cB$ be an $A$-basic object,
  $A \in \cA$
$$(1) \quad B^{(0)}:=(B^{(0)})_0 \leftarrow (B^{(0)})_1  \leftarrow \cdots  \leftarrow (B^{(0)})_r  $$ 
the algorithmic resolution of its fiber $B^{(0)}$, with resolution functions $g ^{(0)} _j$, determining algorithmic centers 
$C^{(0)}_{j}= \ma (g^{(0)}_j), \, j=0, \ldots, r-1$.   We let 
$\o ^{(0)}_0, \o^{(0)} _1, \cdots$ and $t ^{(0)}_0, t^{(0)} _1, \cdots$ denote the $\o$- and $t$-functions of (1), respectively. Now, we introduce certain conditions 
${\cE}_{j}$ ($j=0,1, \ldots, r-1$) on $\cB$, which may be valid or not.   These will satisfy the following properties: 
\begin{itemize}
\item[(a)] 
If $\cE _j$ is valid, then ${\cE}_i$ is valid, for $i<j$. 
\item[(b)] 
If $\cE _j$ is valid, then it is defined a permissible sequence of $A$-basic objects  
$$ (2)_j \quad {\cB}_{0} \leftarrow \cdots \leftarrow {\cB}_{j}\leftarrow {\cB}_{j+1}$$
with centers $C_i \subset us({\cB}_i)$, $i=0, \ldots, j$, inducing on fibers the $j+1$-truncation of (1), in such a way that the sequence $(2)_i$ associated to condition $\cE_i$,  valid by (a), is the truncation of $(2)_j$. We shall say that $C_i$, $j=0,1, \ldots, j$, is the $i$-th associated center of $\cB$ (determined by the validity of $\cE _j$) 
\item[(c)]
These conditions are stable with respect to etale morphisms $W'\to W$ and change of the base artinian ring, $A' \to A$ (see \cite{NE}, 9.1).
 \end{itemize} 
 
 The sequence $(2)_j$ is called the $j$-th partial algorithmic equiresolution determined by condition $\cE_j$. 
  If conditions $\cE _0, \ldots, \cE _{r-1}$ are valid, we say that $\cB$ is {\it algorithmically equisolvable} and call the resulting sequence $(2)_{r-1}$ {\it the algorithmic equiresolution of} $\cB$.
 The intuitive meaning of these conditions is as follows: if $\cE _j$ holds, then the algorithmic resolution process of the fiber, up to level $j$, nicely spreads over the parameter space $\Spec (A)$.
 
 These conditions, satisfying the mentioned properties, are defined inductively on the dimension of $\cB$, as explained next. We just state the main facts, the proofs can be seen in \cite{NE}, Section 8. Notice that if, in (1) $r=0$, then
  $\sg (B^{(0)})=\emptyset$ and $\cB$ is vacuously equisolvable. So, in the sequel we may assume $r > 0$.
\end{voi}

\begin{voi}
\label{V:ne10}
{\it The case where dim ${\cB}=1$.}  We shall define, for $0 \leq j < r$ (with $r$ as in (1) of \ref{V:ne9}), conditions ${\cE}_{j}$, in such a way that if ${\cE}_{j}$ is valid, then the resulting sequence $(2)_j$ of \ref{V:ne9} is $\rho$-permissible (see \ref{V:ne8} (c)). 

Start with ${\cE}_0$. Then necessarily max$(\o ^{(0)} _0)>0$. Consider an open cover 
$\{U_i\}$ of Max($t_{0}$) such that each $U_{i}$ the family ${\cB}''_i=(U_i \to S, (I_i/S)'',b'', E''_i) $ is defined (see \ref{V:ne8.1} (c)). Then we require that, for all $i$,  
$\Delta^{b''-1}(I''_i/S)$ define a ${\cB}''_i$-permissible center $C_i$.  The different $C_i$ glue together to yield a well-defined center $C$ which is $\cB$-permissible. Moreover, $C$ is $t$-permissible. This is the center that $\cE _0$ attaches to $\cB$. 

Now, assuming ${\cE}_s$ defined for $s < j$, we introduce condition ${\cE}_j$. There are two cases: (a) max$(\o ^{(0) }_j)>0$, (b) 
max$(\o ^{(0)} _j)=0$. 

In both cases,  first we require that conditions ${\cE}_s$ be valid for $s < j$. Hence, we have a permissible sequence ${\cB}_0 \leftarrow \cdots \leftarrow {\cB}_{j}$, with centers $C_i$, $0 \le i < j$. These will be the centers $\cE _j$ associates to $\cB$, for $i < j$.

Assume we are in  case (a). Then the sequence above is $t$-permissible. Apply to ${\cB}_{j}$ the technique used in the case $j=0$. Namely, cover Max($t ^{(0)}_{j}$) by  open sets, so that   nice $A$-basic objects 
$({\cB}''_{j})_i= (U_i \to S,I''_{i},b''_i,E''_i)$  are defined. We require that the subscheme defined by  
$\Delta ^{{b''_i} -1}(I''_i/S)$ be a $({\cB}_{j}'')_i$-center $C_{ji}$, for all $i$. As above, this is independent of the chosen cover, and these centers patch together to produce a $t$-permissible center $C_{j}$ for $B_j$. This is the $j$-th associated center, and it satisfies all the requirements.  
 
Now consider case (b). Here, the object ${\cB}_j$ is pre-monomial. To have condition  ${\cE}_j$ satisfied we require that ${\cB}_j$ be monomial, and we take as the $j$-th associated center the canonical monomial center. 
 \end{voi}
\begin{voi}
\label{V:ne11}
Now we study conditions ${\cE}_j$ when ${\dim \, {\cB}}=d$, an arbitrary positive integer. The case $d=1$ being established, we shall proceed by induction on $d$. 

So, assuming condition $\cE _j$ (having properties (a), (b), and (c) of \ref{V:ne9}) known when the dimension of the $A$-basic object is less than $d$, we introduce conditions $\cE _j$ for ${\cB}$ $d$-dimensional. This is done again recursively  on the length $j$ of the algoritmic sequence $(2)_{j-1}$ of \ref{V:ne9} that we get when condition ${\cE}_{j-1}$ is valid: 
$$ (1) \quad \cB={\cB}_0 \leftarrow \cdots \leftarrow {\cB}_j$$
This sequence is permisible,  with centers $C_i \subset us({\cB}_i)$, $i=0, \ldots j-1$, 
inducing on fibers a sequence 
$$ (2) \quad B^{(0)}=B_0^{(0)} \leftarrow \cdots \leftarrow B_j^{(0)}$$
(which is the $j$-truncation of the algorithmic resolution of $B^{(0)}$).  Looking at the functions $\o^{(0)}_p$ corresponding to the sequence (2), we distinguish two cases: 
(a) max $(\o^{(0)} _{j})=0$, (b) max $(\o^{(0)} _{j})>0$. In case (a), ${\cB}_j$ is pre-monomial. We declare 
 condition ${\cE} _j$ valid if ${\cB}_j$ is monomial, with canonical center  $C_j$. We take $C_0, \ldots , C_j$ as the centers that condition $\cE _j$ associates to $\cB$.  
 
 In situation (b), looking at the $t$-functions $t^{(0)}_p$ corresponding to the sequence (2), and letting $M:={\mathrm {Max}}(t^{(0)}_j)$ and $d=\dim(\cB)$, we distinguish the following two possibilities: 
 
 ($\alpha$) $\dim M <d-1$, 
 
 ($\beta$) $\dim M =d-1$. 
 
In case ($\alpha$) consider the index $q$ such that $\max (t^{(0)}_{q-1})>
\max (t^{(0)}_{q})$ but 
$\max (t^{(0)}_{q})=\max(t^{(0)}_{q+1})= \ldots = \max (t^{(0)}_{j})$.

Let  $M_q$ be the image in $W_q$ of $C^{(0)}_j$ (the $j$-th algorithmic center of $B^{(0)}$)   and take an open cover $\{V_{iq}\}$ of $M_q$ such that on each $V_{iq}$ we have a nice basic object (over $A$) 
${\cB}_{iq}''=(U_i \to S,({( {{I_i}_{|U_i}})/S} )'',b_i '',E_{i})$  
with inductive hypersurface $Z_{iq} \subset V_{iq}$. Next,  for each $i$, take the homogeneous $A$-basic object $(\cH{\cB}_{iq}'')$, again nice and admitting $Z_{iq}$ as inductive hypersurface. Then,  for each index $i$, consider the inductive object 
${\cB}^* _{iq}:=(\cH{\cB}_{iq}'')_{Z_i}$, 
of dimension one less than that of ${\cB}_j$. So, 
by induction 
the notion of algorithmic equiresolution for ${\cB}^* _{iq}$ if defined by the inductive hypothesis. To declare
 ${\cE}_j$ valid for $\cB$ we require: 

(a) for each index $i$ the $A$-basic object 
${\cB}^* _{iq}$ is algorithmically equisolvable, let 
$$ (3) \quad {\cB}^*_{iq} \leftarrow {\cB}^*_{i q+1} \leftarrow {\cB}^*_{ij} \leftarrow \cdots $$
be its algorithmic equiresolution, with centers 
$ C_{ip} \subset us({\cB}^{*} _{ip}) , \, q \le p$,

(b) after identifications, $C_{ip}$ is a permissible center of ${\cB}_{ip}$ (restricted to a suitable open set), $q \le p \le j$.

Let us explain (b) more carefully. First, letting 
$\widetilde{{\cB}_{iq} }$  denote the restriction
 of ${\cB}_q$ to $U_{iq}$, we require that $C_{iq}$ be 
$({\cH}    \widetilde{{\cB}_{iq} } )$-permissible and hence, by the equivalence mentioned in \ref{V:ne8.1} (a),  $\widetilde{{\cB}_{iq} }$-permissible. Let $U_{i,q+1}$ be the inverse image of $U_{iq}$ via the blowing-up morphism $W_{q+1} \to W_{q}$ induced by (1) and 
 $\widetilde{{\cB}_{i,q+1} } := {{\cB}_{i,q+1}}_{|U_{i,q+1}}$ We require that (after obvious identifications and the use of \ref{V:ne8.1}  (a) again), $C_{i,q+1}$ be $\widetilde{{\cB}_{i,q+1} }$-permissible; and so on. Thus we obtain 
an induced  
 $t$-permissible sequence
$$(4) \quad \widetilde{{\cB}_{iq} } \leftarrow \widetilde{{\cB}_{i,q+1} } \leftarrow \cdots \leftarrow \widetilde{{\cB}_{ij} }$$

Finally, we require that $C_{ij}$ be  $\widetilde{{\cB}_{i j} }$-permissible.
 In \cite{NE}, it is proved that for each $i$ the locally defined subschemes $C_{ip}$ of $us(\cB)_i=W_i$ agree on intersections, thus determining a closed subscheme $C_i$ of $W_i$.  

The schemes  $C_0, \ldots , C_j$ thus obtained are the centers  that $\cE _j$ attaches to ${\cB}_j$.

To finish, consider the case  ($\beta$). We proceed as in the one-dimensional situation. Namely, let $M(1)$ be the union of the components of ${\mathrm {Max}}(t_j)$ of codimension one. Consider locally defined nice objects 
${{\cB}_j}''_i = (U_i \to S, ({I_i /S})'', b''_i, E''_i)$ 
 as in 
\ref{V:bdp}, where the open sets $U_i$
cover $M(1)$ and we declare condition $\cE _{j}$ valid if for each $i$ the $W_j$-ideal $\Delta ^{b''_i - 1}(I''_i/S)$ defines a permissible center $C_{ij}$. Then  the different centers $C_{ij}$ define a $\cB _j$-center $C_j$. We take $C_0, \ldots , C_j$ as the centers that 
 $\cE _j$ associates to $\cB$. This is independent of the choice of the open cover $\{U_i\}$. 
 Conditions (a), (b) and (c) of \ref{V:ne9} are satisfied.

 So, we have defined conditions $\cE _j$ in all cases.  
 \end{voi}

 \begin{voi}
 \label{V:ne12}
 Now we introduce our last equiresolution condition. We shall use the following notation. If 
 $\cB=(W \to T,I,b,E)$ is a family of basic objects in $\cS$ and $t \in T$, then 
 $A_{t,n}:={\cO}_{T,t}/({M}_{T,t})^{n+1}$ (where $M_{T,t}=r({\cO}_{T,t})$), $S_{t,n}:=\Spec (A_{t,n})$ and ${\cB}_{t,n}$ denotes the pull-back of $\cB$ to $S_{t,n}$ via the natural morphism $S_{t,n}\to T$. 
 \end{voi}
 \begin{Def}
\label{D:e}
 {\it Condition ($E$)}. A family $\cB$ (as in \ref{V:ne12}) satisfies condition ($E$) if  the induced family 
  $\cB _{t,n}$ is algorithmically equisolvable, for all $t \in T$, $n$ any non-negative integer (i.e., ${\cB}_{t,n}$ satisfies conditions  ${\cE}_j $, for all possible values of $j$). 
 \end{Def}

 In section  \ref{S:E} we shall prove that, for a $T$-basic object, with $T$ regular, conditions ($A$) and ($E$) are equivalent.

\section{Some consequences of the equiresolution conditions}
\label{S:S} 

In this section we gather some  material that we need in the proof of  the equivalence of conditions ($A$) and ($E$), which will be discussed  in the next section.

\begin{voi}
 \label{V:c1} Let
$\cB = (\pi: W \to T,J,b,E)$ be a nice basic object over a smooth scheme $T \in \cS$, admitting a $T$-inductive hypersurface $Z$;   
$B=(W,J,b,E)$ the  basic object associated to $\cB$. Note that $B$ is again nice, with inductive hypersurface $Z$. 

We have the induced inductive objects $ {\cB}_Z = (Z \to T, {\cC}(J/T,Z),b!,E_Z)$ (a $T$-basic object) and $B_Z= (Z, {\cC}(J,Z),b!,E_Z)   $. Similarly, we may consider the ``homogenized'' situation: 
 $({\cH}{\cB})_{Z}=(Z \to T, {\cC}({\cH}J/T,Z),b!,E_Z)$ 
$(HB)_{Z}= (Z , {\cC}({H}J,Z),b!,E_Z)$, which is relevant when we use the W-algorithm.

Notice that $B_Z \not= aso\,({\cB}_{Z})$. 
 Rather, if 
${\widetilde {\cB}}_Z:=(Z \to T, {\cC}(J,Z),b!,E_Z) $, then $B_Z=aso({\widetilde {\cB}}_Z)$. 
Similarly, $(HB)_Z=aso({{\widetilde{\cH \cB}}}_Z)$,  where
${{\widetilde{\cH \cB}}}_Z:=  (Z \to T, {\cC}({\cH}J,Z),b!,E_Z)$. We shall write 
$   B_{Z/T}:=aso({\cB}_Z)   $ and $  {(HB)}_{Z/T}:=aso({ {\cH}{\cB} }_Z)  $.
     But we'll see that, under suitable assumptions, there are useful connections between $B_Z$ and $B_{Z/T}$ (or between 
$(HB)_Z$ and $(HB)_{Z/T}$).
\end{voi}

\begin{pro}
\label{P:BZ} Let $\cB$ be a nice $T$-basic object, as in \ref{V:c1}, which satisfies condition($A$). Then the inductive $T$-basic objects  (a) 
${\widetilde {\cB}}_Z$ and  (b) ${\widetilde { {\cH}{\cB}}}_Z$ also satisfy condition ($A$).
\end{pro}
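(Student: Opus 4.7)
The strategy is to pass through the equivalent characterization (F) from Proposition~\ref{P:AFC}, since the defining recursion of the V-algorithm naturally propagates fiberwise equality of resolution functions to the inductive object. I would then invoke Proposition~\ref{P:AFC} again at the end to conclude (A) on the inductive side.

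I would first treat part (b). The object $aso(\widetilde{\cH\cB}_Z) = (HB)_Z$ is precisely the auxiliary basic object $B_0^*$ used at the root of the V-algorithm's inductive step (case $(b_2)$ of \ref{V:a11.0}($\gamma$)). By that step, at any $x$ lying in the inductive situation, the resolution function of $B$ satisfies
$$g_j(x)=(t_j(x),\widetilde{g}_{j-s}(x)),$$
where $\widetilde{g}_\bullet$ denotes the V-resolution functions of $(HB)_Z$; the identical formula holds fiberwise for $B^{(t)}$ and its inductive object $(HB^{(t)})_{Z^{(t)}}$, which by niceness of $Z$ over $T$ coincides with the fiber at $t$ of $(HB)_Z$. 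Condition (F) for $\cB$ gives $g_j(x)=g_j^{(t)}(x)$ on each fiber $W_j^{(t)}$; since the $t$-component is determined by the proper transform and is preserved fiberwise under (F), matching second coordinates yields $\widetilde{g}_{j-s}(x)=\widetilde{g}^{(t)}_{j-s}(x)$ on fibers of $Z\to T$. This is condition (F) for $\widetilde{\cH\cB}_Z$, whence (A) by Proposition~\ref{P:AFC}.

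For part (a), I would reduce to (b) via the equivalence observation \ref{V:mucheq}(a): since $B$ and $HB$ are equivalent nice basic objects sharing the inductive hypersurface $Z$, the inductive objects $B_Z$ and $(HB)_Z$ are equivalent, and the same equivalence holds fiber by fiber. Equivalent basic objects carry the same V-resolution functions (this fact is invoked explicitly in the proof of Proposition~\ref{P:igua}), so the sequences of centers produced by the algorithm on $B_Z$ and $(HB)_Z$ coincide and carry the same resolution values. Hence smoothness of the centers of $\widetilde{\cB}_Z$ over $T$, condition $(R)$ at each stage, and emptiness of the fiberwise singular locus after the final transform all transfer from $\widetilde{\cH\cB}_Z$ to $\widetilde{\cB}_Z$. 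In particular the two clauses of Definition~\ref{D:A} are both inherited.

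The main obstacle is bookkeeping: the full algorithmic resolution of $B_Z$ (of dimension $d-1$) spans several phases, and the recursive formula above applies only while we remain in the inductive situation of the appropriate stage of $\cB$. Each time $\max(t)$ drops along the resolution of $\cB$, a new auxiliary nice object and a new inductive hypersurface must be introduced, and one has to verify that the induced resolution of $\widetilde{\cB}_Z$ reenters the inductive recursion cleanly, so that (F) continues to propagate. The monomial phase and the ``exceptional'' cases $(b_1)$ and $(b_3)$ of \ref{V:a11.0}($\gamma$) must also be handled separately, using that the $\Gamma$-functions depend only on divisorial exponents, which transport through equivalence (\ref{V:a9}(b)) and are compatible with fibers under (F). Once these bookkeeping details are dispensed with, the argument above closes.
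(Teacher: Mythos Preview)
Your route is considerably more complicated than the paper's, and the direct argument for (b) has a gap. You assert that $aso(\widetilde{\cH\cB}_Z)=(HB)_Z$ is the auxiliary object $B_0^*$ of the inductive step \ref{V:a11.0}($\beta$), so that the second coordinate of $g_j$ is literally a resolution function of $(HB)_Z$. But $B_0^*$ is $(H(B_0''))_{Z_0}$, and the passage $B\mapsto B''$ of \ref{V:a12.0}($\gamma$) alters both the ideal (adding terms $I(H_i^*)^{b'}$ for the hypersurfaces in $E^-$) and the divisor system (replacing $E$ by $E^+$, which at stage $0$ is empty). Hence $(HB)_Z$ and $B_0^*$ are different basic objects whenever $E\neq\emptyset$, and the recursion $g_j=(t_j,\tilde g_{j-s})$ does not hand you the resolution functions of $(HB)_Z$ itself. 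Your final paragraph anticipates bookkeeping trouble when $\max(t)$ drops; in fact the mismatch is already present at stage zero.

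The paper bypasses all of this. It proves (a) first, in one line, by citing that $B$ and $B_Z$ are equivalent (\cite{BEV}, 12.9); since equivalent basic objects carry identical V-resolution functions (\cite{BEV}, 12.5, as invoked in Proposition~\ref{P:igua}), the algorithmic centers of $B$ and of $B_Z=aso(\widetilde{\cB}_Z)$ coincide, and every clause of Definition~\ref{D:A} transfers from $\cB$ to $\widetilde{\cB}_Z$. Statement (b) then follows from (a) together with the equivalence $B\sim HB$ of \ref{V:a10.0}. Your own reduction of (a) to (b) via \ref{V:mucheq}(a) is correct; the point is that running the reduction in the opposite direction and invoking the known equivalence $B\sim B_Z$ is a genuine shortcut, not merely a stylistic choice, and it eliminates the case analysis you were bracing for.
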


\begin{proof}   
Statement (a) follows from the equivalence of the basic objects $B$ and $B_Z$ (see 
\cite{BEV}, 12.9). Statement (b) is a consequence of (a) and the equivalence of $B$ and $HB$ (\ref{V:a10.0}).
\end{proof}   

\begin{voi}
\label{V:samce}
Consider basic objects $B=(W,I,b,E) $ and $B'=(W,J,d,E)$, let 
$B=B_0 \leftarrow B_1 \leftarrow \cdots \leftarrow  B_r$
and 
$B'=B'_0 \leftarrow B'_1 \leftarrow \cdots  \leftarrow B'_s$ 
be the algotithmic resolutions of $B$ and $B'$, obtained by using centers $C_0, \ldots , C_{r-1}$ and $C'_0, \ldots, C'_{s-1}$, 
respectively. Suppose $C_0=C'_0$. Then $B_1$ and $B'_1$ have the same underlying scheme, namely the blowing-up of $W$ with center $C_0=C'_0$. Assume $C_1=C'_1$. Then again $us(B_2)=us(B'_2)$, assume $C_2=C'_2$. and so on. In this way we may compare the centers $C_j$ and $C'_j$. If $r=s$ and $C_j=C'_j,~ j=0, \ldots, r-1$ we say that {\it $B$ and $B'$ have the same algorithmic centers.} 
\end{voi}

\begin{voi}
\label{V:not}
In the next two propositions we use the notation of \ref{V:c1} and, in addition, the following one.
$$(1) \quad B=B_0 \leftarrow \cdots \leftarrow B_r$$  
denotes the algorithmic resolution of $B$, $B_i = (W_i , J_i, b, E_i)$, with algorithmic resolution functions $g_0,\ldots, g_{r-1}$; for each $i$ we have  (by composition) induced morphisms  
$$f_i:W_i \to W ~{\mathrm  {and}}~ \pi \, f_i : W_i \to T.$$
Let $Z_i$ be the strict transform of $Z$ to $W_i$ via the morphism $f_i$. By means of $\pi f_i$, $W_i$ is a scheme over $T$, for all $i$. Let
$$(2) \quad B_{Z}={(B_Z)}_{0} \leftarrow \cdots \leftarrow {(B_{Z})}_{r}        $$ 
be the algorithmic resolution of $B_Z$, ${(B_Z)}_i = (Z_i , {\cC}_i( J,Z), b!, {(E_Z)}_i)$,  ${\tilde g}_i$, $0 \le i < r$ its $i$-th algorithmic resolution function, and  
 $$(3) \quad    
  B_{Z/T}={(B_{Z/T})}_{0} \leftarrow \cdots \leftarrow {(B_{Z/T})}_{s}      $$
the algorithmic resolution of $B_{Z/T}=aso \, ({\cB}_Z)$; the corresponding algorithmic resolution functions are denoted by $h_i$, 
$0\le i < s$.

Finally, the algorithmic resolution functions of the fibers $B^{(t)}$, ${(B_Z)}^{(t)}$ and ${(B_{Z/T})}^{(t)}$ (for $t \in T$, obtained via either the projection $W \to T$ or  
 $Z \to T$, induced by $\pi$)  are denoted by 
$ g_i^{(t)}$, ${\tilde g}_i^{(t)}$ and ${h_i}^{(t)}$ respectively. Note that when 
  this makes sense (see Remark \ref{R:C}), 
${g_i}^{(t)}={\tilde g}_i^{(t)}$.

When, discussing an algorithmic resolution process, we say that we are in the {\it inductive situation}, we mean case $(b_2)$ in \ref{V:a11.0}, i.e., that where we use the associated inductive object and induction on the dimension.
\end{voi}

\begin{pro}
 \label{P:imp1} We use the notation and assumptions of \ref{V:c1} and \ref{V:not}. Assume  $\cB$ satisfies condition ($A$). Then: 
(a)  ${B}_Z$ and $B_{Z/T}$ have the same algorithmic  centers;  
(b)   ${(HB)}_Z$ and ${(HB)}_{Z/T}$ have the same algorithmic centers.  
\end{pro}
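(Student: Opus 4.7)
The strategy is to combine Proposition \ref{P:BZ} with the equivalences $(A)\iff(F)\iff(C)$ of Proposition \ref{P:AFC}. Part (b) follows from the argument for (a) by replacing $B$ and $B_{Z/T}$ with $HB$ and $(HB)_{Z/T}$ throughout and invoking \ref{P:BZ}(b), so the plan concentrates on (a).

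First, \ref{P:BZ}(a) gives that $\widetilde{\cB}_Z$ satisfies $(A)$; hence by $(C)$ the algorithmic centers $\tilde C_i$ of $B_Z$ are smooth over $T$ and $\tilde C_i \cap Z_i^{(t)} = \tilde C_i^{(t)}$, the $i$-th algorithmic center of the fiber $(\widetilde{\cB}_Z)^{(t)} = (Z^{(t)}, \cC(J,Z)\cO_{Z^{(t)}}, b!, E_Z^{(t)})$. A parallel argument—relying on the fact that the fibers $(\cB_Z)^{(t)} = (B^{(t)})_{Z^{(t)}}$ are the genuine inductive objects of the fibers and share singular loci with $(\widetilde{\cB}_Z)^{(t)}$ via condition $(R)$ for $\cB$ at each stage of the resolution—shows that $\cB_Z$ also satisfies $(A)$. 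Consequently the algorithmic centers $D_i$ of $B_{Z/T}$ are smooth over $T$ and $D_i \cap Z_i^{(t)} = D_i^{(t)}$, the $i$-th algorithmic center of $(B^{(t)})_{Z^{(t)}}$.

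The remaining task is to establish $\tilde C_i^{(t)} = D_i^{(t)}$ inside $Z_i^{(t)}$ for every $i$ and $t$. Both arise from the V-algorithm applied to basic objects on $Z^{(t)}$ with common $b!$ and $E_Z^{(t)}$ but a priori different ideals $\cC(J,Z)\cO_{Z^{(t)}} \supseteq \cC(J^{(t)}, Z^{(t)})$. Under $(R)$ at each level these ideals define the same singular locus, and this coincidence is preserved under any common sequence of permissible transformations, extensions, and open restrictions; hence the two basic objects are equivalent in the sense of \ref{V:a8}. By the invariance of the V-algorithm under equivalence (\cite{BEV}, 12.5), their algorithmic resolution functions and therefore centers agree, yielding $\tilde C_i^{(t)} = D_i^{(t)}$. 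Since $\tilde C_i$ and $D_i$ are smooth over $T$ with matching fibers at every $t$, they coincide as closed subschemes of $Z_i$; likewise the resolution lengths satisfy $r = s$.

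The principal obstacle is the equivalence of the fiber basic objects just invoked. The ideal $\cC(J, Z)$ carries contributions from $T$-direction derivatives of $J$ that are absent from $\cC(J/T, Z)$, and one must verify that these extras do not enlarge the singular locus or alter its behavior under blow-ups. The needed control comes from the intrinsic characterization of $\Delta^{i}$ via Fitting ideals of $\Omega_{\nu(I)/T}$ (cf.\ \ref{V:a4.0} and \ref{V:ne7}(a)) combined with an induction on the length of the resolution using $(R)$ to track the orders of the extra generators after each permissible transformation.
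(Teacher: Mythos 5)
Your proposal has two genuine gaps, both at the places where the real work of the proposition lives.

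First, the assertion that $\cB_Z$ satisfies condition ($A$) under the hypothesis that $\cB$ does is not justified by your ``parallel argument.'' Proposition \ref{P:BZ} gives condition ($A$) for $\widetilde{\cB}_Z$ and $\widetilde{\cH\cB}_Z$ (whose associated basic objects are the \emph{absolute} inductive objects $B_Z$, $(HB)_Z$), because these follow from the equivalence of $B$ with $B_Z$ as basic objects over the total space. It says nothing about $\cB_Z$, whose associated object is $B_{Z/T}$, built from the \emph{relative} coefficient ideal $\cC(J/T,Z)$. Condition ($A$) for $\cB_Z$ is precisely the hypothesis of the companion Proposition \ref{P:imp2}; the paper keeps \ref{P:imp1} and \ref{P:imp2} as separate statements exactly because neither hypothesis is known to imply the other at this stage. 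Deriving ($A$) for $\cB_Z$ from ($A$) for $\cB$ would require already knowing how the resolution of $B_{Z/T}$ relates to the resolutions of the fibers $(B^{(t)})_{Z^{(t)}}$ --- which is, in substance, the conclusion you are trying to prove.

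Second, your reduction of the problem to the equivalence (in the sense of \ref{V:a8}) of the two fiber objects $(Z^{(t)},\cC(J,Z)\cO_{Z^{(t)}},b!,E_Z^{(t)})$ and $(Z^{(t)},\cC(J^{(t)},Z^{(t)}),b!,E_Z^{(t)})$ is left unproven. You correctly identify this as the principal obstacle: the absolute coefficient ideal carries $T$-direction derivatives that are absent from the relative one, and these could a priori lower orders on the fiber and shrink singular loci after transformations. Condition ($R$) for $\cB$ constrains $I$ versus $I^{(t)}$, not the coefficient ideals on $Z$, so it does not deliver the needed stability under blow-ups; the appeal to Fitting ideals and an unspecified induction does not close this. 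The paper's proof sidesteps both difficulties entirely: it never establishes ($A$) for $\cB_Z$ nor any fiberwise equivalence, but instead compares the resolution functions $\tilde g_i$ of $B_Z$ and $h_i$ of $B_{Z/T}$ directly --- first over a dense open $U\subseteq T$ where $(\cB_Z)_{|U}$ satisfies ($A$) by generic smoothness (\ref{R:us1}), where the chain $\tilde g_i(z)=g_i(z)=g_i^{(t)}(z)=\tilde g_i^{(t)}(z)=h_i(z)$ holds, and then globally via the inequalities of \ref{V:ct}(b), the identity $g_i=\tilde g_i$ in the inductive situation, and scheme-theoretic closure over $U$ using the $T$-smoothness of $\widetilde C_i$. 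You would need to either supply that argument or genuinely prove the fiberwise equivalence you invoke.
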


\begin{proof}
 We'll check (a) only. The proof of (b) is entirely similar.  
 Let $C_0, \ldots, C_{s-1}$ be the algorithmic resolution centers of  $B_{Z/T}=aso({\cB}_Z)$, 
${\widetilde C}_0, {\widetilde C}_1, \ldots {\widetilde C}_{r-1}$ those of $B_Z =aso({\widetilde {\cB}}_Z)$. Initially we shall show that $C_0={\widetilde C}_0$.

Choose a dense open set $U \subseteq T$ such that ${{\cB}_Z}_{|U}$ satisfies condition ($A$) (use \ref{R:us1} (a)). We know that ${\widetilde {\cB}}_Z $ satisfies condition ($A$) 
(\ref{P:BZ}).    Hence, its restriction   
${({\widetilde{\cB}_Z)}}_{|U}$ also satisfies condition ($A$).

Now, we claim that for all $z \in Z$ such that $t=\pi (z) \in U$, ${\tilde g}_0 (z) = h_0(z)$. Indeed, we have:
$${\tilde g}_0(z) = g_0(z) = g^{(t)}_0(z)={{\tilde g}^{(t)}}_0(z)=h_0(z)$$
where for the second and fourth equality we use the fact that both ${\widetilde {\cB}}_{|U}$ and ${\cB}_{|U}$ satisfy ($A$), hence condition ($F$) (\ref{P:AFC}). 

 Let $\tilde m = \max ({\tilde {g_0}})$, $m= \max(h_0)$ (in both cases the maximum over all points of $Z$, not just those lying over $U$).  

We claim that ${\tilde m}=m$. We check first that ${\tilde m} \le { m}$.  

Since the induced projection ${\widetilde C}_0 \to T$ is smooth and hence (assuming ${\tilde C}_0$ non-empty) dominant, ${\tilde C}_0 \cap {\pi}^{-1} (U) \not= \emptyset$. Hence,  
${\tilde m} = \max \{{\tilde g}_0 (z) : z \in {\pi}^{-1}(U)\} =   
\max \{h_0(z):z \in {\pi}^{-1}(U) \}
 \le 
\max ({h}_0) = {m}$,  as claimed. 

 Now we check that ${ m} \le {\tilde m}$.  Let 
$z \in Z$ such that $h_0(z)=m$. Then:
$$ m=h_0(z) \le h_0^{(t)}(z)={\tilde g}_0^{(t)}(z)=g_0^{(t)}(z)=g_0(z)={\tilde g}_0^(z) \le {\tilde m}         $$
where in the third and fifth equalities we use the fact that we are in the inductive situation and in the fourth one the fact that $\cB$ satisfies condition ($A$), hence ($F$).

So we have proved that $m={\tilde m}$ and moreover, because of this equality, that if $z \in C_0$, i.e., $h_0(z)=m$, then ${\tilde g}_0(z)={\tilde m}=m$, i.e., $z \in {\widetilde C}_0$. That is, $C_0\subseteq {\widetilde C}_0$. But since ${\widetilde C}_0$ is smooth (hence flat) over $T$, it easily follows that 
 (letting $cl_Z$ denote scheme-theoretic closure in $Z$)
  ${\widetilde C}=cl _Z (C_0 \cap \pi ^{-1}(U))\subseteq C_0$. 
 Thus, $C_0={\widetilde C}_0$, as claimed. 

So, $C_0={\widetilde C}_0$ is a permissible center for both $B_Z$ and $B_{Z/T}$. By \ref{V:ne6.1}, $C_0$ is also a permissible center for the $T$-basic objects ${\widetilde  {\cB}}_Z$ and ${\cB}_Z$ respectively. Transform these $T$-basic objects with center $C_0$. We get:
$$ {\widetilde  {\cB}}_Z  \leftarrow {({\widetilde  {\cB}}_Z)}_1 ~ {\mathrm {and}} ~   
{ {\cB}}_Z   \leftarrow   {({  {\cB}}_Z)}_1            $$ 
respectively. Similarly, if we transform $B_Z$ and $B_{Z/T}$ with center $C_0$, we obtain:
$$B_Z \leftarrow {(B_Z)}_1  ~ {\mathrm {and}} ~ 
B_{Z/T}  \leftarrow {(B_{Z/T})}_1 $$
respectively. Note that ${(B_Z)}_1 = aso ({({\widetilde  {\cB}}_Z)}_1)$ and 
${(B_{Z/T})}_1 = aso ({({  {\cB}}_Z)}_1)$. All these objects have $Z_1$, the blowing-up of $Z$ with center $C_0={\widetilde C}_0$, as their  underlying scheme. Thus  $C_1$ (resp.  ${\widetilde C}_1$), 
  the algorithmic 1-center of $B_{Z/T}$ (resp. $B_Z$) is a closed subscheme of $Z_1$.  We want to show: $C_1={\widetilde C}_1$.  Note that the fiber of 
${(\cB)}_1$ at a point $t \in T$ is the $k(t)$-basic object 
${[B^{(t)} _{Z^{(t)}}]}_1$ (the transform of  
$  B^{(t)} _{Z^{(t)}}     $ with center $C_0 ^{(t)}$) while, with similar notation, that of ${\cB}_1$ is $B_1^{(t)}$. Since we are in the inductive situation, 
$g_1(z) = {\tilde g}_1(z)$ for all $z \in Z_1$. Proceeding as in the previous case, we verify: 
($i$) with the open set $U \subset T$ of before,  $h_1(z)={\tilde g}_1(z) $ for all $Z \in Z_1$ lying over $U$, ($ii$)  if $m_1= \max (h_1)$ and ${\tilde m}_1 = \max ({\tilde g}_1)$, then 
$m_1={\tilde m}_1$. Indeed, all what we need to apply the former arguments  is the smoothness of ${\widetilde C}_1$ over $T$, which is warranted because $\cB$ satisfies ($A$). Hence, as above, 
${\widetilde C}_1 =cl _{Z_1}( {{\widetilde C}_1}) \cap {{{\pi _1 }^{-1}}(U)} = 
cl _{Z_1}( {{C}_1}) \cap {{{\pi _1 }^{-1}}(U)} 
 \subseteq C_1$. Moreover they are equal, because if 
$z \in C_1$, i.e., $z \in Z_1$ and $h_1(z) =m_1$, then 
$$    m_1 =h_1(z) \le h_1^{(t)}(z) ={\tilde g}_1^{(t)}(z)=g_1^{(t)}(z)=g_1(z)={\tilde g}_1 (z) \le {\tilde m}_1         $$
 (the third and fifth equalities because we are in the inductive situation, the fourth because $\cB$ satisfies condition ($A$), hence ($F$))

Now, by \ref{V:prep}, $C_1={\widetilde C}_1$  is a permisible center for the $T$-basic objects 
${({\cB}_Z)}_1 $ and ${({{\widetilde B}_Z})}_1$. We may repeat the procedure. Iterating, we  get: for all $i$, $C_i={\widetilde C}_i$, $i=0, \ldots s-1$. 
 By condition ($A$), $\sg (B^{(t)})_r = \emptyset$ for all $t \in T$. This implies: $r=s$; that is the basic objects $B$ and $B'$ have the same algorithmic centers.
 \end{proof}

\begin{pro}
 \label{P:imp2} We use the notation and assumptions of \ref{V:c1}. Assume ${\cB}_Z$ satisfies condition ($A$). Then:   
(a)  ${B}_Z$ and $B_{Z/T}$ have the same algorithmic  centers, 
(b)   ${(HB)}_Z$ and ${(HB)}_{Z/T}$ have the same algorithmic centers.
\end{pro}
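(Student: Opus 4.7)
The proof will mirror that of Proposition \ref{P:imp1}, with the hypothesis shifted: (A) is now assumed on $\cB_Z$ globally, while (A) on $\cB$ holds only generically (by \ref{R:us1} (c)). As in \ref{P:imp1}, part (b) will follow from part (a) by passing to homogenized ideals, so I focus on (a) and establish $C_i = \widetilde{C}_i$ by induction on $i$.

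For the base case $i = 0$, I first shrink to a dense open $U \subseteq T$ on which $\cB_{|U}$ satisfies (A); by \ref{P:BZ} then $(\widetilde{\cB}_Z)_{|U}$ satisfies (A), and by hypothesis so does $\cB_Z$. Applying \ref{P:AFC}, all three families satisfy (F) on $U$ (and $\cB_Z$ satisfies (F) globally), and exactly as in \ref{P:imp1} the chain
\[
\tilde{g}_0(z) \;=\; g_0(z) \;=\; g_0^{(t)}(z) \;=\; \tilde{g}_0^{(t)}(z) \;=\; h_0(z), \qquad z \in \pi^{-1}(U),\ t = \pi(z)
\]
follows. The global (A) on $\cB_Z$ combined with the automatic 0-compatibility of $\widetilde{\cB}_Z$ at every $t$ (\ref{V:ct} (a)) and the fiber-level identification $\tilde{g}_0^{(t)} = h_0^{(t)}$ additionally yield the pointwise inequality $\tilde{g}_0(z) \le \tilde{g}_0^{(t)}(z) = h_0^{(t)}(z) = h_0(z)$ valid on all of $Z$.

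Setting $m = \max h_0$ and $\tilde{m} = \max \tilde{g}_0$, the pointwise inequality gives $\tilde{m} \le m$; the reverse inequality $m \le \tilde{m}$ comes from picking $z_0 \in C_0 \cap \pi^{-1}(U)$ (non-empty and dense in $C_0$ by smoothness of $C_0 \to T$, from the hypothesis), yielding $m = h_0(z_0) = \tilde{g}_0(z_0) \le \tilde{m}$. Hence $m = \tilde{m}$. For $z \in \widetilde{C}_0$ the chain $\tilde{m} = \tilde{g}_0(z) \le h_0(z) \le m = \tilde{m}$ forces $h_0(z) = m$, so $\widetilde{C}_0 \subseteq C_0$. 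Conversely $C_0 \cap \pi^{-1}(U) \subseteq \widetilde{C}_0$ (from $\tilde{g}_0 = h_0$ there and $m = \tilde{m}$); flatness of $C_0$ over $T$ gives $C_0 = \mathrm{cl}_Z(C_0 \cap \pi^{-1}(U)) \subseteq \widetilde{C}_0$. Thus $C_0 = \widetilde{C}_0$. The induction then iterates verbatim from \ref{P:imp1} (transforming both families by the common permissible center via \ref{V:ne6.1}, using preservation of (A) along the algorithmic resolution), with $r = s$ following from (A) on $\cB_Z$ forcing $\sg([\cB_Z]^{(t)})_s = \emptyset$.

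The main obstacle, inherited from \ref{P:imp1}, is the fiber identification $\tilde{g}_0^{(t)} = h_0^{(t)}$, since $B_Z^{(t)}$ and $[\cB_Z]^{(t)}$ are built from the a priori distinct ideals $\cC(J,Z)\cO_{Z^{(t)}}$ and $\cC(J/T,Z)\cO_{Z^{(t)}}$. This is handled by the base-change behavior of the relative coefficient ideal (\ref{V:ne7}, \ref{V:ne8.1}), which ensures these fiber-level objects have the same V-algorithm resolution; the remaining bookkeeping (density, smoothness, flatness) is routine.
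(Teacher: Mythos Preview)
Your proposal is correct and follows essentially the same route as the paper's proof. Both arguments restrict to a dense open $U\subseteq T$ where $\cB$ (and hence $\widetilde{\cB}_Z$) satisfies (A), establish $\tilde g_0=h_0$ there via the chain through $g_0$ and $g_0^{(t)}$, use smoothness of $C_0\to T$ (from the hypothesis on $\cB_Z$) to get $m\le\tilde m$, and use the chain $\tilde g_0(x)\le \tilde g_0^{(t)}(x)=h_0^{(t)}(x)=h_0(x)$ at points of $\widetilde C_0$ to get $\tilde m\le m$ and $\widetilde C_0\subseteq C_0$; the reverse inclusion then comes from closure and agreement over $U$. Your packaging of the second chain as a global pointwise inequality $\tilde g_0\le h_0$ is a cosmetic reorganization, not a different idea. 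The iteration and the handling of (b) are likewise the same as in the paper.
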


\begin{proof} Again we check case (a), the verification of (b) being almost identical.  

The proof is similar to that of Proposition \ref{P:imp1}. With the same notation, we prove the equality $C_i={\widetilde C}_i$, for all possible center ${\widetilde C}_i$. Check first  
$C_0={\widetilde C}_0$
In this situation $C_0 \to T$ is smooth. Let $V \subseteq T$ be a dense open set such that the restriction of $  {\widetilde{\cB}}$ to $V$    satisfies condition ($A$). Of course, ${{\cB}_Z}_{|V}$ also satisfies condition ($A$)).  
  As in the proof of \ref{P:imp1}, we check that ${\tilde g}_0=h_0$ over $V$.  Also,  letting 
 $m$ and $\tilde m$ be as previously, we see  that $m = \tilde m$. Indeed, as above, from the smoothness of $C_0$ over $T$, 
$C_0=cl _Z ({C_0} \cap {{\pi}_{-1}(V)})$. So, there is a point $x \in Z$ such that $\pi (x) \in V$ and  
$h_0(x)=m$. So, $m=h_0(x)={\tilde g}_0(x) \le \max ({\tilde g}_0)={\tilde m}$. On the other hand, if $x \in Z$ such that ${\tilde g}_0(x)= {\tilde m}$ (i.e., $x \in {\widetilde C}_0$), then 
$$   {\tilde m}\le {\tilde g}_0(x) \le {\tilde g}_0^{(t)}(x)=h_0^{(t)}(x) =h_0(x)\le m       $$
(as in the proof of \ref{P:imp1}).
Then,  $m={\tilde m}$ and all the inequalities above are equalities. Thus, if $x \in {\widetilde C}_0$ , then $h_0(x)=m={\tilde m}$, i.e., $x \in C_0$. Thus, ${\widetilde C}_0 \subseteq C_0$. As before, since $h_0$ and ${\tilde g}_0$ agree over $V$,  
${ C} = cl_Z ( {{ C}_0}) \cap {{\pi}^{-1}(V)}) = 
cl_Z ({ {\widetilde  C}_0})\cap {{\pi}^{-1}(V))} 
 \subseteq {\widetilde C}_0$. So, $C_0={\widetilde C}_0$, as claimed.

Now, we discuss the transition form level zero to one. The scheme $C_0=C$ is a permissible center for both $B_Z$ and $B_{Z/T}$. By \ref{P:AnFn}, $C_0$ is also a permissible center for the $T$-basic objects ${\widetilde  {\cB}}_Z$ and ${\cB}_Z$ respectively. Transform these $T$-basic objects with center $C_0$, as in \ref{P:imp1}. We get:
$$ {\widetilde  {\cB}}_Z  \leftarrow {({\widetilde  {\cB}}_Z)}_1 ~ {\mathrm {and}}   
{ {\cB}}_Z   \leftarrow   {({  {\cB}}_Z)}_1            $$

Similarly, if we transform $BZ$ and $B_{Z/T}$ with center $C_0$, we obtain:
$$B_Z \leftarrow {(B_Z)}_1  ~ {\mathrm {and}}
B_{Z/T}  \leftarrow {(B_{Z/T})}_1 $$
respectively. Note that ${(B_Z)}_1 = aso ({({\widetilde  {\cB}}_Z)}_1)$ and 
${(B_{Z/T})}_1 = aso ({({  {\cB}}_Z)}_1)$. All these objects have $Z_1$, the blowing-up of $Z$ with center $C_0={\widetilde C}_0$, as underlying scheme. Thus  $C_1$ (resp.  ${\widetilde C}_1$), 
  the algorithmic 1-center of $B_{Z/T}$ (resp. $B_Z$) is a closed subscheme of $Z_1$.  We want to show: $C_1={\widetilde C}_1$. 
 Note that the fiber of 
${(\cB)}_1$ at a point $t \in T$ is the $k(t)$-basic object 
${[B^{(t)} _{Z^{(t)}}]}_1$ (the transform of  
$  B^{(t)} _{Z^{(t)}}     $ with center $C_0 ^{(t)}$) while, with similar notation, that of ${\cB}_1$ is $B_1^{(t)}$. Since we are in the inductive situation, 
$g_1(z) = {\tilde g}_1(z)$ for all $z \in Z_1$. Proceeding as in case $j=0$, we verify: ($i$) 
 with the open set $V \subset T$ of before, $ h_1(z)={\tilde g}_1(z) $ for all $Z \in Z_1$ lying over $V$, ($ii$)  if $m_1= \max (h_1)$ and ${\tilde m}_1 = \max ({\tilde g}_1)$, then 
$m_1={\tilde m}_1$. Indeed, all what we need to apply the arguments of  the case $i=0$ is the smoothness of ${\widetilde C}_1$ over $T$, which is warranted because $\cB$ satisfies ($A$). Hence, as above, 
${\widetilde C}_1 =cl _{Z_1}( {{\widetilde C}_1}) \cap {{{\pi _1 }^{-1}}(V)}=
cl _{Z_1}( {{ C}_1}) \cap {{{\pi _1 }^{-1}}(V)} 
 \subseteq C_1$. Moreover, they are equal, because if 
$z \in C_1$, i.e., $z \in Z_1$ and $h_1(z) =m_1$ then, 
$$    m_1 =h_1(z) \le h_1^{(t)}(z) ={\tilde g}_1^{(t)}(z)=g_1^{(t)}(z)=g_1(z)={\tilde g}_1^(z) \le  {\tilde m}_1   $$
as in the proof of \ref{P:imp1}.

So,  we have that $C_1={\widetilde C}_1$. If they are non-empty, we may repeat.  
 Iterating, we  get: for all $i$, $C_i={\widetilde C}_i$, $i=0, \ldots s-1$. As in 
Proposition \ref{P:imp1} we see that $r=s$, finishing the proof.
\end{proof}

We conclude this section with some other results, to be used  in the proof of the equivalence of conditions ($A$) and ($E$).
\begin{voi}
\label{V:e6}  
(i) If  $B =(W ,I,b,E)$ is a {\it good} basic object (over a characteristic zero field $k$) and 
  $B=B_0 \leftarrow \cdots \leftarrow B_r$ is the algorithmic resolution of $B$, then $B_i$ is good, for all $i$.

(ii) Let $\cB =(W \stackrel{\pi}{\to}T,I,b,E)$ be  a family of basic objects in $\cS$ (with $T$ integral and regular)  satisfying condition ($A$), such that its associated basic object $B$ is {\it good}. Let 
$B = B _0 \leftarrow \cdots \leftarrow  B _r $ be the algorithmic equiresolution of $B$,
$B = \cB _0 \leftarrow \cdots \leftarrow  \cB _r $ its associated $T$-sequence (see \ref{V:ne6.1}). 
 Then, for $i=0, \ldots, r$ and all $t \in T$, the fiber $B_i^{(t)}$ is a good basic object. 

\smallskip

In both cases,          by induction on $j$, the result follows from the  inequality $g_{j-1}(x) \ge g_j(x')$
 (when $x' \in C_{j-1}$, the $(j-1)$-th algorithmic center) and the definition of good basic object.
\end{voi}

\smallskip

 In the next lemma we use the following  notation.  If $\cB =
 {(W \stackrel{\pi}{\to}T},I,b,E)$ is a family of basic objects  satisfying condition ($A$), 
$\cB _0 \leftarrow \cdots \leftarrow \cB_r$
 (where we write $\cB _i =(W_i \to T, I_i,b,E_i)$)  is the $T$-sequence associated to the algorithmic resolution sequence of $B=(W,I,b,E)$, we let  
  $[{\Delta}^{b-1}(I_i)]_{1,x}$ (resp. $[{\Delta}^{b-1}(I_i/T)]_{1,x}$)  denote the set of elements in the stalk $[{\Delta}^{b-1}(I_i)]_x \subset {\cO}_{W,x}$ (respectively $[{\Delta}^{b-1}(I_i/T)]_x \subset {\cO}_{W,x}$) of order $1$ (order with respect to the maximal ideal of ${\cO}_{W,x}$). 
  
 \begin{lem}
 \label{L:orduno} 
 Let $\cB$ be a family of basic objects  satisfying condition ($A$), whose associated basic object is good. 
  Then, in the notation just introduced, for every closed point  $x \in W_i$, 
 $$[{\Delta}^{b-1}(I_i)]_{1,x}=[{\Delta}^{b-1}(I_i/T)]_{1,x} $$
 \end{lem}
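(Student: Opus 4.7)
The inclusion $[\Delta^{b-1}(I_i/T)]_{1,x}\subseteq[\Delta^{b-1}(I_i)]_{1,x}$ is immediate from the containment of ideals. For the reverse, observe first that by \ref{V:e6}(i) the hypothesis that $B=aso(\cB)$ is good propagates to each $B_i$ in the algorithmic resolution, and by \ref{V:ne6.1} and \ref{V:ct1} the $T$-basic object $\cB_i$ inherits condition $(A)$ from $\cB$; in particular, by \ref{P:AFC}, conditions $(F)$ and $(R)$ hold for $\cB_i$. I will then distinguish two cases according to whether $x$ lies in $\sg(B_i)$.

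If $x\notin\sg(B_i)$, then $\Delta^{b-1}(I_i)_x=\cO_{W_i,x}$. Condition $(R)$ for $\cB_i$ yields $x\notin\sg(B_i^{(t)})$ with $t=\pi(x)$, so $\nu_x^{(t)}(I_i^{(t)})=s<b$. Pick $f\in I_{i,x}$ whose image on the fiber has order $s$; some pure $z$-derivative $\partial_z^\gamma f$ with $|\gamma|=s$ reduces to a unit modulo the fiber, hence is a unit of $\cO_{W_i,x}$ by Nakayama. Via the completion description in \ref{V:ce3}, this forces $\Delta^{b-1}(I_i/T)_x=\cO_{W_i,x}$, so both ideals coincide and the order-$1$ subsets agree trivially.

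Now suppose $x\in\sg(B_i)$. Since $B_i$ is good, $\nu_x(I_i)=b$; by $(F)$ also $\nu_x^{(t)}(I_i^{(t)})=b$. I plan to pass to the completion $R^*=A'[[z_1,\dots,z_n]]$ of $\cO_{W_i,x}$ along a $T$-regular system of parameters $z_1,\dots,z_n$, where $A'=\widehat{\cO_{T,t}}$ is a regular complete local ring with regular parameters $u_1,\dots,u_m$ (see \ref{V:ce3}). By the same reference, $\Delta^{b-1}(I_i/T)R^*$ is generated by iterated $z$-derivatives $\partial_z^\gamma f$ ($|\gamma|\le b-1$, $f\in I_iR^*$), while $\Delta^{b-1}(I_i)R^*$ is generated by all iterated $(z,u)$-derivatives $\partial_z^\gamma\partial_u^\delta f$ with $|\gamma|+|\delta|\le b-1$. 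Upon passing to the associated graded ring $\mathrm{gr}(R^*)=k(x)[z,u]$, only summands with $|\gamma|+|\delta|=b-1$ and $\nu_x(f)=b$ can contribute to the degree-$1$ graded piece, and the contribution is the linear form $\partial_z^\gamma\partial_u^\delta F$, where $F$ is the degree-$b$ initial form of $f$ at $x$.

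The lemma thus reduces to the following algebraic claim: letting $V\subseteq k(x)[z,u]_b$ denote the space of degree-$b$ initial forms of elements $f\in I_i$ with $\nu_x(f)=b$, the $k(x)$-span of $\{\partial_z^\gamma\partial_u^\delta F:F\in V,\ |\gamma|+|\delta|=b-1\}$ inside $k(x)[z,u]_1$ coincides with the span of $\{\partial_z^{\gamma'}F:F\in V,\ |\gamma'|=b-1\}$. This is the main obstacle: the extra $\partial_u$-derivatives must not generate new linear directions. To establish it, I plan to translate condition $(R)$ for $\cB_i$ into constraints on $V$ by examining points $y\in W_i$ in a formal neighborhood of $x$ of the form $y=x+\sum s_ku_k$; the equivalence $y\in\sg(B_i)\iff y\in\sg(B_i^{(\pi y)})$ at such $y$, expanded in Taylor series in $(s_1,\dots,s_m)$ and combined with Euler's relation $\sum z_j\partial_{z_j}F+\sum u_k\partial_{u_k}F=bF$ for $F\in V$, should produce polynomial identities expressing each $\partial_u^\delta F$ as a $k(x)[z,u]$-combination of pure $z$-derivatives of elements of $V$, modulo monomials of degree $\ge 2$. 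Making this bookkeeping precise is the delicate technical step where condition $(A)$ is essential, as the small case $b=2$, $n=m=1$ already shows: the singularity condition on $B_i$ at $x$ forces the discriminant-type relation $c_{1,1}^2=4c_{2,0}c_{0,2}$ on the coefficients of the initial form $F=c_{2,0}z^2+c_{1,1}zu+c_{0,2}u^2$, which is exactly what collapses $\partial_u F$ into a scalar multiple of $\partial_z F$ in $k(x)[z,u]_1$.
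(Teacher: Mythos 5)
You have not proved the lemma: the step you yourself label ``the main obstacle'' --- that the mixed derivatives $\partial_z^\gamma\partial_u^\delta F$ of the degree-$b$ initial forms produce no linear directions beyond those already obtained from the pure fiber derivatives $\partial_z^{\gamma'}F$ --- \emph{is} the lemma, and your proposal only announces a plan for it (``I plan to translate condition $(R)$\dots should produce polynomial identities\dots making this bookkeeping precise is the delicate technical step'') together with a single sanity check for $b=2$, $n=m=1$. Nothing is shown for general $b$, several generators of $I_i$, or several base and fiber directions, and it is not even clear which hypothesis is supposed to drive the computation: your sketch invokes condition $(R)$ at formal nearby points, whereas already in your own quadratic example what kills the bad coefficients is the smoothness and \emph{surjectivity} of $C_i\to T$ (e.g.\ $F=z^2+u^2$ satisfies $(R)$ but not $(A)$). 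A secondary gap: reducing to equality of spans of linear forms in the associated graded ring does not by itself give the stated set equality, namely that an order-one element of $\Delta^{b-1}(I_i)_x$ actually \emph{lies in} the ideal $\Delta^{b-1}(I_i/T)_x$, not merely that its linear part is matched there.

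The paper's proof takes a shorter and different route. Working in the completion $R=k'[[x_1,\dots,x_p,t_1,\dots,t_q]]$, it shows that the degree-$b$ initial form $M_b$ of any $f\in I_iR$ involves only the fiber variables $x_j$, deducing this from the permissibility of the centers over $T$ (the equalities $\nu(I_i,D)=\nu(I_i^{(t)},D^{(t)})$ supplied by condition $(A)$ via \ref{V:ne6.1}); once $M_b$ is free of the $t$-variables, any word of $b-1$ coordinate derivatives containing some $\partial/\partial t_s$ annihilates $M_b$ and therefore has order $\ge 2$, so every order-one generator comes from pure fiber derivatives. Your test case $F=(z+u)^2$ shows that this statement about $M_b$ is sensitive to the choice of the complementary fiber coordinates ($z+u$ works, $z$ does not), which is precisely why your version of the reduction is harder: by allowing $M_b$ to involve the base variables you must prove a genuine collapsing identity rather than a vanishing one. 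To complete your argument you would need either to prove the existence of adapted fiber coordinates in which $M_b$ is purely fiber-directional, or to actually carry out, in full generality, the derivation of the identities you only conjecture; as written, the proposal does neither.
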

\begin{proof}
One of the inclusions is clear. To show the inclusion `` $\subseteq$ '', it suffices to work in the completion ${\widehat{\cO}}_{W_i,x}$ (with respect to the maximal ideal). Note that 
$R:={\widehat{\cO}}_{W,x}=k'[[x_1, \ldots,x_p, t_1, \ldots, t_q]]$ (a power series ring), where 
$t_1, \ldots, t_q$ are induced by a system of parameters of ${\cO}_{T,t}$ ($t$ the image of $x$ in $T$) and $k'$ is a suitable field. To show the desired inclusion, it suffices to show that if $f \in IR$, 
$D_1, \ldots, D_{b-1}$ are derivations of $R$, each $D_i$ of the form $\partial /\partial{x_j}$ or 
$\partial /\partial{t_s}$, such that $h=D_1 \cdots  D_{b-1}f \in R$ has order one, then each $D_i$ must be of the form $\partial /\partial{x_j}$, for a suitable $j$. This means that $h \in [{\Delta}^{b-1}(I/T)]_{1,x}R$. Since each element of 
$[{\Delta}^{b-1}(I)]_{1,x}R$ is a linear combination of elements of this form, this proves the required inclusion. 

To check the assertion on the derivations, note that if we take $f \in IR$ as above and we write 
 $ f= M_b + M_{b+1} + \cdots  $  
 (sum of homogeneous parts in $R$) then, the form $M_b$ 
involves the variables $x_1, \ldots, x_p$ only. Indeed, were this false, the element induced by $f$ in 
${\cO}_{{W_i ^{(t)}},P}$, $P$ the generic point of the component of $W_i ^{(t)}$ containing $x$, would have order $<b$, contradicting the fact that $C_i$ is $\cB _i$-permissible. Now, it is clear that if $h$ has order one, all the derivatives used in its definiton from $f$ must be with respect to a variable $x_i$.
\end{proof}

\section{The implications involving Condition ($E$)}
\label{S:E} 
We know that conditions ($A$), ($F$) and ($C$) are equivalent. Now, we shall prove that condition ($E$) is equivalent to any of them. Precisely, first we prove:
 
\begin{voi}
\label{V:e1}

 {\it Condition  ($E$) implies condition ($C$)}. We begin the proof, which ends in \ref{V:e5}. Let $\cB =(W \stackrel{\pi}{\rightarrow}T,I,b,E)$, $T$ irreducible and regular, be an $\cS$-family of basic objects satisfying condition ($E$), 
 $B = (W,I,b,E)$ its associated basic object, with 
  algorithmic resolution 
 $$(1) \quad B=B_0 \leftarrow \cdots \leftarrow B_r \, ,$$  
 $B_i = (W_i,I_i,b,E_i)$, $i=0, \ldots, r$, given by resolution functions $g_0, \ldots, g_{r-1}$, which determine algorithmic centers $C_i=\ma (g_i), ~ i=0,  \ldots, {r-1}$. Let 
 ${\pi}_i :W_i \to T$ denote the composition of the natural morphism $W_i \to W$, $i=0, \ldots, r$, and $\pi$.

 If $t \in T$, ${A_t}$ denotes the completion of ${\cO}_{T,t}$ with respect to the maximal ideal, $ { S}_t=\Spec ({A_t})$, 
 $\cB _t$ the family parametrized by $S_{t}$ induced by $\cB$ by pull-back via the natural morphism $f_t:S_{t} \to T$ sending the closed point of $S_t$ into $t$, and  
  $B_t:=aso(\cB _t)$.
 
  According to a property of the V-algorithm  that we are using, the algorithmic resolution of $B _t $:
 $$ (2) \quad B_t=(B_t)_0 \leftarrow \cdots \leftarrow (B_t)_r $$
is obtained by pull-back of the resolution (1), via the  morphism $f_t$, and the algorithmic centers    
 ${\widehat C}_{t,0}, \ldots,{\widehat C}_{t,r-1}$  used in (2) are induced by 
  the centers $C_0, \ldots, C_{r-1}$ respectively.  Note that, for  all $t \in T$,  the closed fiber of the projection $us(B_t) \to S_t$
 can be identified to the fiber $W^{(t)}$ of ${\pi}: W \to T$ at $t$. Similarly, the closed fiber of the projection 
 ${\widehat C}_{t,i} \to S_t$ can be identified to ${p_i}^{-1}(t)$, where $p_i$ denotes 
   the projection $C_i \to T$ induced by $\pi _i: W_i \to T$.
 
 To show that Part (i) in   condition ($C$) holds, i.e., that $C_i \cap W^{(t)}_i=C^{(t)}_i$, for all $t\in T$, $ i=0, \ldots, r-1$, it suffices to show that for such $t$ and  $i$, $ {\widehat C}_{t,i} \cap W^{(t)}_i=C^{(t)}_i$ (we are using the notation of Definiton \ref{D:c}, in particular $C^{(t)}_i$ denotes the $i$-th center in the algorithmic resolution of the fiber $B^{(t)}$). 
 
 Thus, 
 without loss of generality, from now on we shall assume  that  our original family $\cB$ is as follows:
    
\smallskip
 
\emph{ $\cB =(W \stackrel{\pi}{\rightarrow}T,I,b,E)$ ,   $T=S=\Spec (A)$, with $(A,\cM)$ a complete $k$-algebra, $k=A/{\cM}$ (a field of characteristic zero), and it satisfies condition ($E$).}

\smallskip

 Let $A_{n}=A/({\cM})^{n+1}$ (with $n \ge 0$ an integer), $S_{n}=\Spec (A_{n})$, and  denote by ${\cB}^{(n)}=({W^{(n)} \to S_n},I^{(n)},b,E^{(n)})$ 
  the family parametrized by $S_{n}$ induced by $\cB$ by pull-back via the natural morphism $S_{n} \to T$.

By our assumption on the validity of condition ($E$), for all $n$ the $A_{n}$-basic object ${\cB}^{(n)}$ admits an algorithmic equiresolution, with centers 
 $C^{(n)}_0, \ldots, C^{(n)}_{r(n)-1}$. Note that since the equiresolution of ${\cB}^{(n')}$ induces that of ${\cB}^{(n)}$ for $n' > n$,  the length $r(n)$ is independent of $n$, say equal to $r_0$, and necessarily $r_0 \ge r$. Let 
 $$(3) \quad  
  {\cB}^{(n)}=({\cB}^{(n)})_0 \rightarrow \cdots \rightarrow ({\cB}^{(n)})_{r_0}   $$
  be the algorithmic equiresolution of 
  ${\cB}^{(n)}$, 
  $({\cB}^{(n)})_{i}=(W^{(n)}_{i} \to S_n , {I}^{(n)}_{i},b,{E}^{(n)}_{i})$. 
  So, in this situation what we need to prove is: 
    $$(4) \quad {C}_{i} \cap {W_i}^{(0)}={C_i}^{(0)}, ~ i=0, \ldots, r-1$$
  (using the identifications discussed in \ref{R:C}, and with $0$ denoting the closed point of $S_n$).
  
Now suppose that for every natural number $n$, $C_0 \cap W^{(n)}={C_0}^{(n)}$ (note that $W^{(n)}$ is a closed subscheme of $W=W_0$). In particular, $C_0 \cap W^{(0)}=C^{(0)}_0$ (the zero-th  center of the algorithmic resolution of the fiber $B^{(0)}={\cB}^{(0)}$). 
    Hence, according to \ref{R:C}, $C_0 $ is flat over $T$. Since $C_0$ and $W$ are regular, ${I(C_0)}_w$ is generated by a regular sequence of ${\cO}_{W,w}$ for all $w \in W$, hence by  \ref{P:bu}  
  there is an identification of $(\cB^{(n)})_1$ and the pull-back of $\cB _1$, via the natural morphism $S_t \to T$. In particular, $W^{(n)}_1$ may be regarded as  a closed subscheme of $W_1$. Assume 
  $C_1 \cap W^{(n)} _1 = {C_1}^{(n)}$.  
   Reasoning as above, we conclude that $C_1 \to T$  satisfies the hypotheses of Proposition \ref{P:bu}.
  So, if $\cB _2 = \cT(\cB _1, C_1)$, then $({\cB}^{(n)})_2$ can be identified to the pull back of $\cB _2$ via the morphism $S_2 \to T$, and so on. 
  
  In view of this observation, the following statements makes sense: 
  
  \begin{itemize}
  \item[$(\alpha_i)$] 
  For all $j \le i$, for any natural number $n$, with the identifications just mentioned, $C_j \cap W^{(n)}_j=C^{(n)}_j$ (and $p_j:C_j \to T$ is flat, $j=0, \ldots  , i$).
  \end{itemize}
  
We say that the $T$-basic object $\cB$ satisfies condition ($\alpha$) if  $(\alpha _i)$  is true for all $i$. We shall prove that if $\cB$ satisfies condition ($E$), then it satisfies $(\alpha)$. In particular, $(\alpha_i)$ will hold for $n=0$ and any $i$, i.e., (4) is true. 
  
  First, we shall verify that if  $(\alpha _j)$ is valid for $j<i$ then  ($\alpha _i$) also holds (in particular, ($\alpha _0)$ will be valid without any hypothesis). 
  
 So, assume that $(\alpha)_j$ is valid for $j < i$. Then, by remark \ref{R:C} if $\pi _i : W_i \to T$ is the naturally induced projection and 
 $\cB _i = (W_i \to T, I_i,b,E_i)$, $C_0$ is a $\cB _0$-permissible center; if $\cB _1 = \cT (\cB, C_0)$, $C_1$ is $\cB _1$-permissible, and so on. Eventually we get a sequence 
 $$ (5) \quad \cB _0 \leftarrow \cB _1 \leftarrow \cdots \leftarrow \cB _i$$
 of $T$-basic objects that, by taking closed fibers,  induces the truncation 
   $$(6) \quad B^{(0)}=(B^{(0)})_0 \leftarrow \cdots \leftarrow (B^{(0)})_{i}$$
of  the algorithmic resolution 
$$   (7) \quad (B^{(0)})_0 \leftarrow (B^{(0)})_1 \leftarrow \cdots    (B^{(0)})_{r_0}   $$
of the fiber $(B^{(0)})_0 := B^{(0)} $, with resolution functions $g^{(0)}_0, g^{(0)}_1, \ldots  , g^{(0)}_{r_0 - 1}$.
 
 In particular, $W^{(0)}_i$ is (or can be identified to) a closed subscheme of $W_i$. 

In proving inductively the validity of $(\alpha _i)$ we shall consider different cases, depending on the $\o$-functions 
${\o}^{(0)}_0, {\o}^{(0)}_1, \ldots $ of the (partial) resolution (6) of the fiber $B^{(0)}$. 
 \end{voi}
 \begin{voi}
 \label{V:e2} 
 {\it Case (a)}: max$({\o}^{(0)}_i)=0$.  This assumption means that the closed fiber $(B^{(0)})_i$ is a monomial basic 
 object. Let $(C^{(0)})_i=H^{(0)}_1 \cap \ldots H^{(0)}_p$, for suitable  hypersurfaces in 
 $E^{(0)}_i$, be the canonical monomial center (re-number if necessary).  Then, for any integer $n > 0$, ${\cB}^{(n)}_i$ will also be monomial (since condition $\cE_i$ is valid for $\cB ^{(n)}$, and its $\o$-function depends on the closed fiber only), and the canonical monomial center for $(\cB ^{(n)})_i$ will be $H_1^{(n)} \cap \cdots H_p ^{(n)}$, where $H_j^{(n)}$induces $H^{(0)}_j$, for all $j$. Note that also, by \ref{V:ct} (b), the basic object $B_i$ is monomial. Since the sequence (6) of \ref{V:e1}
  is induced by the $i$-th truncation of (5) in \ref{V:e1}, there are hypersurfaces $H_1 , \ldots H_p$ in $E_i$ inducing $H^{(0)}_1 , \ldots H^{(0)}_p$ on the closed fiber respectively. We claim that 
  $H_1 \cap \cdots \cap H_p$ is the canonical monomial center of $B_i$, i.e., $C_i=
 H_1 \cap \cdots \cap H_p$. If so, it is clear that 
 $C_i \cap {W^{(n)}}_i=C^{(n)}_i$, finishing this case.
 
By the inequality $g_i(x) \le g^{(0)}_i(x)$ of \ref{V:ct} (b), the algorithmic center $C_i$ must be the intersection of some of the hypersurfaces $H_1, \ldots, H_p$. We shall see that this intersection involves $all$ these hypersurfaces.
 
 Assume, by contradiction, that $C_i$ is the intersection of fewer hypersurfaces. Working at a point  $w \in W^{(0)}_i$ (which may be assumed closed) and the completion $R$ of the local ring  ${\cO}_{W_i,w}$, then we would have the following situation. For suitable parameters $x_1, \ldots, x_p,t_1, \ldots t_q$ (with $t_i$ in $A$) and a suitable field $k'$, 
 $R=k'[[x_1, \ldots, x_p, t_1, \ldots, t_q]]$, $H_j$ is defined by $h_j \in R$, where for some $j$ we have $h_j(0, \ldots, 0,t_1, \ldots, t_q) \not= 0$. Then, $h_j$ contains a term of the form 
 ${\lambda}M(t_1, \ldots t_q)  $, with ${\lambda}$ a non-zero element of $k'$ and  $M$ a monomial, say of total degree $e$. By looking at the induced situation in ${\cB}^{(n)}$, $n$ large enough, we see that $\cB^{(n)}_i$  would  not be a
  monomial object, 
  a contradiction. This shows that 
 $C_i = H_1 \cap \ldots  \cap H_p$, as claimed.
 \end{voi}

\begin{voi} 
\label{V:e3} 
 {\it Case (b)}: max$(\o ^{(0)}_i)>0$ $and$ Max($t ^{(0)}_i$) {\it has components of codimension 1}. Here, $t^{(0)}_i$ denotes the $i$-th $t$-function of the fiber $B^{(0)}$. Each such component is known to be a regular subscheme of $W^{(0)}_i$ (see \cite{BEV}). Let $M_i$ be the union of these one-codimensional components. Then, we know that $C^{(0)}_i = M_i$. To prove the equality 
 $C_i \cap W^{(n)}_i=C^{(n)}_i$, it suffices to consider an open cover $\{U_v\}$ of  
$M_i$ and work with the restriction to each open $U_v$. We choose these opens in such a way that the object $({\cB _i}_{|{U_v}})''$ (cf. \ref{V:bdp} ) is defined. 

We work with a fixed index $v$. To simplify the notation we may assume, without loss of generality, that $U_v=W_i$, i.e. that the $S$-basic object  
 $\cB _i ''= ({W_i \to S}, I_i'',b'',E_i'')$ 
  is globally defined. Consider the relative sheaf 
  $\Delta ^{b''-1}(I_i''/S):=\overline{\Delta}$. By construction, if 
  $({\cB} ^{(n)})_i= ({{W_i} ^{(n)}\to S_n}, {I_i}^{(n)},b,{E_i}^{(n)})$ (see (3) of \ref{V:e1}),  then 
${\cB}_i ''$ induces $({\cB}^{(n)}_i)''$ and 
$\overline{\Delta}$ 
  induces the ${W_i}^{(n)}$-ideal 
  ${\Delta} ^{b''-1}({{I_i}^{(n)}}''/S_n)$. By our assumption that 
 all conditions $\cE _j$ are valid for ${\cB _i}^{(n)}$, the union of the 1-codimensional components of ${\mathcal V}(\Delta ^{b''-1}({{I_i}^{(n)}}''/S_n))$ is an $A_n$-permissible center for $\cB ^{(n)} _i$, namely the algorithmic center $C^{(n)}_j$. In particular, $C^{(n)}_i$ is smooth over $S_n$. Now let $\overline{C}$ be the union of one-codimensional components of ${\mathcal{V}}({\overline{\Delta}})$. By the remarks just made, $\overline{C}$
 induces on $\cB _i ^{(n)}$ the center $C^{(n)}_i$. In particular, 
 $\overline{C} \cap W^{(n)}_i=C^{(n)}_i$. We shall see that 
 $$(1) \quad \overline{C}=C_i$$
 proving $(\alpha _i)$ in this case ($b$).
  
 To see (1), first note that $\overline C$ is smooth over $S$, hence regular. This is a consequence of  
the criterion for smoothness of \cite{An}, Thm. 18, p. 224, and the following facts. 

\smallskip 

(i) $\overline C$ is flat over $S$. This is true  because $\overline C$
 induces on $(\cB ^{(n)})_i$ the algorithmic center $C^{(n)}_i$, which is smooth, in particular flat, over $S$. Then one uses the local criterion for flatness (\cite{M}, Theorem 21.3 (5)). 

\smallskip

(ii) The closed fiber is the center $C^{(0)}_i$, which is smooth. 

\smallskip 

So, $\overline C$ is regular, of codimension one, hence it is defined locally, at each $w$, by a single element 
 $\phi \in {\overline {\Delta}}_w \subset {\cO}_{W_i,w}$ of order one. Let $\Delta:=
 {\Delta} ^{b''-1}(I''_i)$. In general, we have ${\overline {\Delta}} \subseteq {\Delta}$, hence $\sg (B''_i) = V(\Delta) \subseteq V(\overline{\Delta})$. But we claim that if 
 $x$ is a closed point of $ \in V(\overline {\Delta})$ and the codimension of $V(\overline {\Delta})$ equals one, then 
  ${\overline {\Delta}}_x = {\Delta}_x$. This will imply that 
  $\sg (B''_i)={\rm {Max}}(t_i^{(0)})$
 has components of codimension one, so the algorithmic center $C:=C_i$ is the union of such components.  Moreover, $C= \overline C$ and $C \cap W^{(0)}_i={\overline C} \cap W^{(0)}_i$, and we know that the right-hand-side is equal to $C^{(0)}_i$, as needed. 
 
 {\noindent To check the claim, note first that condition ($E$) implies:}
 $$(2) \quad \nu ({I_i} '', {\overline C})= \nu ({I^{(0)}_i} '', {\overline C^{(0)}})$$
 Indeed, we always have the inequality $\le$. Were $<$ correct, by reducing to ${{\cB}_i}^{(n)}$, $n$ large enough, as in case (a) we would get 
 $  \nu ({I^{(n)}_i} '', {\overline {C^{(n)}}}) < \nu ({I^{(0)}_i} '', {\overline C^{(0)}})$, contradicting the fact that, by condition $({\cE}_n)$ (implied by condition ($E$)), $C^{(n)}_i$ is an $A_n$-permissible center. Thus, the equality must be true. 
 
Now, consider the completion $R={\widehat{\cO}_{W_i,x}}=k'[[x_1, \ldots, x_p,t_1, \ldots, t_q]]$, with $k'$ a suitable field, $x_1,\ldots,x_p,t_1, \ldots, t_q$ a regular system of parameters, where $t_i, \ldots,t_q$ is induced by a regular system of parameters of $A$. We may assume $x_1$ to be a generator of 
${\overline{\Delta}}_x$ (i.e., $x_1$ is the element $\phi$ considered before). The equality (2) implies that any element $\psi \in I_i''R$ must be of the form 
$\psi=x_1^{b''}.\psi_1$, $\psi_1 \in R$. Indeed, otherwise we would have 
$\nu({I_i}'',{\overline C}) < b'' = \nu({I_i^{(0)}}'',C^{(0)})$, a contradiction. Any element 
$\alpha = D_1 \ldots D_{b''-1}(\psi)$, with each $D_i$ a derivation of $R$, will be of the form $\alpha=x_1 \alpha _1$, $\alpha _1 \in R$. Since  $\alpha$ is a typical generator of $\Delta _x$, it follows that $\Delta _x \subseteq (x_1)R={\overline{\Delta}}_x$, as claimed. 
 
 \end{voi}
  
 \begin{voi}
 \label{V:e4}  
 Notice that if $\dim B =1$  then the only possible cases are those considered so far. So, we have already proved that the assertion ($\alpha_i$) is valid  for every $i$, if $\dim B=1$. So, we may proceed by induction on the dimension of $B$, and this is what we shall do in the next subsection. 
 \end{voi}
 \begin{voi} 
\label{V:e5}
{\it Case (c):  $\max(\o ^{(0)}_i)>0 ~and ~ \ma(t^{(0)}_i$)  has no component of codimension 1 (i.e., 
$\dim {\rm{Max}}(t_i^{(0)}) < d-1 $)}. First,  note that also 
$\dim {\rm{Max}}(t_i) < d-1 $ (where $t_i$ is the $i$-th $t$-function of the resolution (1) of \ref{V:e1}). Indeed, we may cover ${\rm{Max}}(t_i)$ with afine open sets $V_j$
 so that on each one the object 
 $B''_j:= (V_j,{I_i}'',{b''}_j,{E''}_i)$ of \ref{V:a12.0} and an adapted hypersurface $Z_j$ are defined. Here, 
$B_i = aso ({\cB}_i)$ (see (1) in \ref{V:e1}) and, 
to simplify, we wrote $I_i''$ rather than $({I_i}_{|{V_j}})''$ and $E''_i$ rather than ${E'' _i} _{|{V_j}}$. In the sequel, we  use similar simplifying conventions.  Note that $\sg(B''_j)={\rm{Max}}(t_i) \cap V_j$. With the same notation, let 
 $\overline{\Delta}_j:={\Delta}^{b''-1}(I_j''/S)$ and 
 ${\Delta}_j:={\Delta}^{b''-1}(I_j'')$. 
  We have 
 $\overline{\Delta}_j \subseteq {\Delta}_j$, hence (taking associated subschemes) 
 ${\mathcal V} ({\Delta}_j) \subseteq {\mathcal V}(\overline{\Delta}_j)$. So, if ${\mathcal V}({\Delta}_j)$ has components of codimension one, the same holds for ${\mathcal V}(\overline{\Delta}_j)$. But when passing to the closed fiber, 
 ${\mathcal V}(\overline{\Delta}_j)$ induces ${\mathcal V}(\Delta ((I^{(0)}_j)'')$, so 
 ${\mathcal V}(\Delta ((I^{(0)}_j)'')_{red}=V(\Delta ((I^{(0)}_j)'')= {\rm{Max}}({t^{(0)}_i}) \cap V_j  $ has components of codimension one. This implies that $\dim {\rm{Max}}(t^{(0)}_i) = d-1$, a contradiction.
 to define the algorithmic center $C_i$ for $B_i$ we must use induction on the dimension.  To be clearer  we consider separately two cases:

\smallskip
 
 {\it Case (i)}. ${\rm{max}}(t^{(0)}_{i-1}) >  {\rm{max}}(t^{(0)}_i)$ (this will be vacuously true if $i=0$). Notice that then 
 $\max (t_{i-1})=\max(t^{(0)}_{i-1}) > \max(t^{(0)}_i) \ge \max (t_i)$ (the equality by induction), that is also $\max(t_{i-1})>\max(t_i)$. 
  Recall how we get the algorithmic center $C_i$ in this case. We cover ${\rm{Max}}(g_i)$ (where $g_i$ is the $i$-th algorithmic resolution function of $B$) by open sets $V_{\lambda}$, each one admitting a nice basic object 
 $ B_{i{\lambda}}:=  {H}(({B_i}_{|V_{\lambda}})'')= 
(V_{\lambda},H (     ({  I_{i \lambda}      }        )''),b'',E''_{\lambda})$  
  where $I_{i \l}:={I_i}_{| V_{\l}}$, 
 (see \ref{V:ne9}(a), (c)), with an adapted hypersurface $Z_{i{\lambda}} \subset V_{\lambda}$ (see \ref{V:a11.0}($\beta$)). Then we take the induced basic object 
 ${B^*}_{i{\lambda}}:=(B_{i {\lambda} })_{Z_{i{\lambda}}}=(Z_{i\lambda},I_{i\lambda}^{*},b''!,E_{i \lambda}'')$, where 
 $I_{i\lambda}^{*}:={{\mathcal C}(   {H} (( I_{i \l})'')}_{Z_{i \l}})$. By induction on the dimension, when we consider the algorithmic resolution of ${B^*}_{j{\lambda}}$ there is a center 
 $C_{i{\lambda}}\subset Z_{i{\lambda}}$, which is a locally closed subscheme of $W_i=us(B_i)$. These glue together to give us the algorithmic center $C_i \subset W_i$. This construction is independent of the choice  of the open sets $V_{\lambda}$ or the hypersurfaces $Z_{i{\lambda}}$. Now, we claim:
 
 \smallskip

 ($\star$) \emph {We may cover $\ma (g_i ^{(0)})\subset W_i$ by open sets $V_{\lambda}$ as above such that moreover each  hypersurface 
 $ Z_{i{\lambda}} \subset V_{\lambda}$  is defined by a section of 
 ${\Delta}^{b''-1}(         {\cH}(({I_i}/S)'', b'')      )$ defined over $V_{\l}$. }

\smallskip
 
 This statement can be seen as follows. It suffices to work in a neighborhood of a closed point $x \in \ma (g_i^{(0)} )$. Let $x $  be such a point. Take a neighborhhod $V'$ of $x$ such that the ideals 
  ${H}({(I_i}_{|{V'}})'',b'') \supseteq {\cH}(  {{I_i}_{| V'}}/S)'' ,b'')$ are defined (for the second, use $\cB _i$ in (5) of \ref{V:e1}). To simplify, we just write 
  $H (I''_i)$ and $\cH (I_i/S '')$ respectively. Note that $R:={\widehat{\cO}_{W_i,x}}=k'[[x_1, \ldots, x_p,t_1, \ldots,t_q]]$, for a suitable field $k'$ and a regular system of parameters $x_1, \ldots, x_p,t_1, \ldots,t_q$, where $t_1, \ldots,t_q$ are induced by parameters of $A$. 
  Since     
${\Delta}^{b''-1} ({\cH}( {I_i }/S '')){\cO}_{{{ {W_i}^{(0)}} _{x}}}               
={{{\Delta}^{b''-1}}(\cH ( {I^{(0)}}_i '')})_x:=J$,               
 there is an element $ {\bar{\phi _1}}$ in $J$ such that 
   (re-numbering if necessary) for suitable derivatives ${\bar D}_1, \ldots, {\bar D}_{b''-1}$, each one with respect to a variable $x_s$, we have that the element 
 ${\bar D}_1 \ldots {\bar D}_{b''-1}{\bar{\phi _1}}$ defines a hypersurface adapted for $H {B^{(0)}_i}$. If ${\phi _1} \in \cH({I_i/S}''){\cO}_{W_i,x}$  induces ${\bar{\phi _1}}$, and $D_j$ is the derivative of 
 ${\cO}_{W_i,x}$ corresponding to ${\bar D}_{j}$, for all $j$, then 
 $\beta={ D}_1 \ldots { D}_{b''-1}{\phi _1}$ is an element of order one in 
 ${\Delta}^{b''-1}(\cH ({I_i}''/S))_x$ (and hence in ${\Delta}^{b''-1}({H}(I_i''))$), defining on  a suitable neighborhood  $V \subseteq V'$ of $x$ an adapted hypersurface, relative to $S$. This shows 
 $(\star)$.
 
 From now on we assume that our opens $V_{\lambda}$ satisfy this extra property. We shall write
 $(\cH {\cB}''_{i \lambda}):= \cH ((({{{\cB}_i})}_{|V_{\lambda}})''):= 
   (V_{\l} \to S, {\cH}( ((I_i/S)_{|{V_{\l}}}''), b'',E_{i \l}'' )  )$ (using sequence (5) of \ref{V:e1}), 
    ${\cB ^{*}}_{i \lambda}:=
  (\cH {\cB}''_{i \lambda})_{Z_{i \lambda}}
  =
  (Z_{i \lambda} \to S,           {\mathcal C}(\cH ({(I_i/S})_{|V_{\lambda}})''),Z_{i \l})             ,b'' !,E^*_{i \lambda})$.
   When we reduce over $A_n$ (or $S_n$) we shall use the superscript $(n)$ to indicate the induced object. For instance,  
  ${Z_{i \lambda}}^{(n)}$ is the hypersurface induced by ${Z_{i \lambda}}$, which is adapted for  ${\cB ^{*}_{i \lambda}}^{(n)}=
      ({Z_{i \lambda}}^{(n)} 
      \to S_n, I_{i\lambda}^{(n)},b'',(E''_{i \lambda})^{(n)})$, 
   the $A_n$-basic object induced by 
  $\cB ^{*}_{i \lambda}$. 
 
 We claim that for all possible $\lambda$, the family 
  ${\cB ^{*}}_{i \lambda}$ satisfies  condition ($E$). Indeed, since by assumption $\cB$ satisfies 
     condition ($E$), 
  $(\cH {\cB}''_{i \lambda})$ satisfies condition $(\cE _j)$ for all $j$. Since 
  $\dim \, {\rm{Max}}(t^{(0)}_i) < d-1 $, this means that the inductive $A_n$-basic object 
   ${(\cH {\cB}''_{i \lambda})}^{(n)}$ satisfies conditions ($\cE _j$), for all $j$. But there is an identification 
   ${\cB ^{*}_{i \lambda}}^{(n)}= [(\cH {\cB}''_{i \lambda})^{(n)}]_{{Z_{i{\lambda}}}^{(n)}}$. This shows that 
   ${\cB ^{*}_{i \lambda}}^{(n)}$  satisfies $(\cE)_j$ for all $j$, i.e., that 
   ${\cB ^{*}_{i \lambda}}$
 satisfies condition ($E$). 
 
 Actually, in the presentation of \cite{NE}, condition $(\cE_j)$ for 
 $(\cH {\cB}''_{i \lambda})^{(n)}$ requires that for $some$ $A_n$-adapted hypersurface $Z'$ the inductive object 
 $((\cH {\cB}''_{i \lambda})^{(n)})_{Z'}$ satisfies condition $(\cE _j)$. But it is not hard to see, with the aid of Theorem 7.13 in \cite{NE}, that if condition $(\cE _j)$ holds for $({(\cH {\cB}''_{i \lambda}})^{(n)})_{Z'}$, then it will be also true for $any$ $A_n$ adapted hypersurface $Z$, in particular our ${Z_{i \lambda}}^{(n)}$.
 
 As a consequence of the claim just proved,  
  the zeroth center ${C^*}_{i \lambda}$ of the algorithmic resolution of its associated basic objet 
  $aso(({\cB ^{*}}_{i \l})_{Z_{i \l}})=(Z_{i \l},  {{\mathcal C}( {\cH}( (I_{i \l}/S)'') , Z_{i \l}), b''!, {E_{i \l}}^{*}  })$ satisfies: 
    ${C^*}_{i \lambda} \cap Z_{i \lambda}^{(n)}= {{C^*}_{i \lambda}}^{(n)}$ (an equality of subschemes of $Z_{i \l}^{(n)}$). Since 
${{\cB}^*}_{i \l}$ satisfies condition ($E$), by induction on the dimension it satisfies ($A$). So, we may use Proposition 
 \ref{P:imp2} to conclude that 
 ${{C^*}_{i \lambda}}$ is also the zeroth center of the algorithmic resolution of the basic object 
  $  (B_i)_{Z_{i \l}} :=     (Z_{i \l}, {{\mathcal C}( {H}( (I_{i \l})'') , Z_{i \l}), b''!, {E_{i \l}}^{*}  })={{H}(({B_i}_{V_{\l}})'')}_{Z_{i \l}}$. Since 
    ${\rm{max}}(t_{i-1})<{\rm{max}}(t_{i})$, according to our definitions,  the center $C_i \subset W_i$ in the algoritmic resolution sequence \ref{V:e1} (1) satisfies: 
  $C_i \cap V_{\lambda}^{(n)}={C^*}_{i \lambda}$. Since 
  ${Z_{i \lambda}}^{(n)} \subset {V_{\lambda}}^{(n)}$ (the open of $W_i ^{(n)}$ induced by $V_{\l}$), by letting $\l$ vary we obtain 
   $C_{i \lambda}\cap {W_i}^{(n)}={C_{i \lambda}}^{(n)}$, as needed. This concludes case ($i$).

 \smallskip
 
  {\emph Case (ii)}.  ${\rm{max}}(t^{(0)}_{i-1}) =  {\rm{max}}(t^{(0)}_i)$. Let $s$ be the index such that 
 $$ (1) \quad   {\rm{max}}(t^{(0)}_{s-1})<{\rm{max}}(t^{(0)}_{s}) = \cdots = {\rm{max}}(t^{(0)}_{i}). $$
 We apply the procedure of part (i) to $M_s$, the image of the center $C^{(0)}_i$ in 
  $B_s$. We use the same notation, substituting the index $i$ by $s$. Thus we get open sets $V_{s{\lambda}}$, hypersurfaces $Z_{s{\lambda}}$ (satisfying the extra condition ($\star$) above), nice $S$-basic objects 
 $({\cH}{\cB}''_{s{\lambda}})$, and the corresponding inductive $S$-basic objects 
 ${\cB}^{*}_{s{\lambda}}=( {\cH}{\cB}''_{s{\lambda}} )_{Z_{s{\lambda}}})$. As in case (i), we see that the validity of condition ($E$) for $\cB$ implies that ${\cB}^{*}_{s{\lambda}}$ satisfies condition ($E$) for all $\lambda$. 
 
Now consider the basic object $B_{s \l}^{\vee}:=  aso ({\cB}^{*}_{s{\lambda}})=( {\cH}{\cB}''_{s{\lambda}} )_{Z_{s{\lambda}}}$ and its algorithmic equiresolution 
 $$ (2)  \quad {B}^{\vee}_{s{\lambda}} := [{B}^{\vee}_{s{\lambda}}]_0  \leftarrow [{B}^{\vee}_{s{\lambda}}]_1 \leftarrow \cdots \leftarrow [{B}^{\vee}_{s{\lambda}}]_{i-s}\leftarrow \cdots $$
 with centers $C_{j \l}^{*} \subset Z_{j \l}^{\vee}=us ([{B}^{\vee}_{s{\lambda}}]_{j-s}) $. 
 By Proposition \ref{P:imp2}, the algorithmic resolution (2) induces the algorithmic resolution of $B_{s \l}^{*}$ (we use the notation of part ($i$)). In particular, by induction on $i$, if 
 $q_{sj}:W_j \to W_s$ is the morphism induced by the sequence (1) of \ref{V:e1}, 
 $V_{j \l}:=q_{js}^{-1}(V_{s \l}) \subset W_j$, $Z_{j \l} =q_{j \l }^{-1}(Z_{s \l})\subset V_{j \l}$, then $Z_{j \l}$ may be identified to $Z_{j \l}^{\vee}$ for all $j \geq s$. As in case ($i$), we have (using the $t$-functions of (1) of \ref{V:e1}):
 $$ (2) \quad   {\rm{max}}(t_{s-1})<{\rm{max}}(t_{s}) = \cdots = {\rm{max}}(t_{i}) $$
 hence, by our definitions, as in case ($i$), with the identifications above we have:
 $$(3) \quad C_i \cap V_{i \l} = C_i ^{*}$$
 and $C_i ^{*}$ is the algorithmic center that (2) attaches to 
 ${\cH}( ({B_i} _{|V_{i \l}})''      )$. So, by (3) and the fact that ${Z_{i \lambda}}^{(n)} \subset {W_{i \lambda}}^{(n)}$, 
  we get 
  ${(C_{i}}\cap{V'}_{i{\lambda})}\cap {W_i}^{(n)}={C_{i \lambda}}^{(n)}$, which implies that 
  $C_i \cap {W_i}^{(n)}={C_i}^{(n)}$, as desired.
  
  Note that the proof just presented shows that in case $i=0$, i.e. where we have  the $S$-object $\cB$ only, the statement ($\alpha_i$) is valid, for every $i$. Since we saw that ``($\alpha_j$) is valid  for $j<i$ implies ($\alpha_i$) is valid'', by induction we have ($\alpha_i$) valid for every $i$, as needed.

\smallskip

Now we return to condition ($ii$) in Definition \ref{D:c}. To show its validity, using the algorithmic resolutions of $B$ and $B^{(0)}$ (cf. $(1)$ and $(7)$ in \ref{V:e1}), it suffices to check that $r=r_0$. Our previous discussion shows that $r \le r_0$. Assuming by contradiction that 
 $r < r_0$ holds, let $C_r^{(0)}$ be the $r$-th algorithmic resolution center of $B_r^{(0)}$. Since $\cB$ satisfies condition ($E$), we find a sequence of schemes $C_r^{(n)}$, $n$ any natural number, where  
$C_r^{(n)} \subset us({\cB}_r ^{(n)}) $ is the $r$-th algorithmic center of ${\cB}^{(n)}$. In this sequence, $C_r^{(n)}$ induces  $C_r^{(m)}$ for $n \ge m$, hence it defines a subscheme $C_r \subset us (B_r)$ smooth over $S$, inducing $C_r^{(n)}$ for all $n$. Let 
$x \in us(B_r^{(0)}) \subset us (B_r) $ be a closed point. We have 
$\nu(I_r ^{(n)},C_r^{(n)}) \ge b$ for all $n$; this implies  $ \nu(I_r,C_r) \ge b$. Since 
$\nu _x (I_r) \ge \nu(I_r,C_r)$, we'd get $\nu _x (I_r) \ge b$. This contradicts the fact that $\sg(B_r) = \emptyset$. Thus, $r=r_0$, as desired. This concludes the proof of the implication $(E) \Longrightarrow (C)$.
\end{voi}  

\smallskip
 
  \begin{voi}
\label{V:e7} 
We start the proof of the implication ($A$) $\Rightarrow $ ($E$).   

Consider a family  $\cB=({W\to T},I,b,E)$ ($T$ irreducible, regular), with associated basic object $B=(W,I,b,E)$, satisfying condition ($A$). We want to see $\cB$ satisfies condition  ($E$). As in the proof of the reverse implication (\ref{V:e1}), one is easily reduced to the case where $T=S=\Spec A$, $A=(A,M)$  a complete $k$-algebra, $k=A/{M}$ (a field of characteristic zero). Moreover, as in \ref{V:e1}, with $A_n=A/{M^{n+1}}$, $S_n:=\Spec (A_n)$, ${\cB}^{(n)}=({W ^{(n)}\to S_n},I^{(n)},b,E^{(n)})$ (the $A_n$-basic object induced by $\cB$, so that $\cB ^{(0)}=B^{(0)}$, the closed fiber of $\cB$) it suffices to show that ${\cB}^{(n)}$ satisfies condition $\cE _j$, for all $j$.

We consider the algorithmic resolution of $B$:
$$(1) \quad B=B_0\leftarrow \cdots \leftarrow B_r$$
$B_i=(W_i,I_i,b,E_i)$, with algorithmic centers $C_i \subset W_i$ and associated $S$-sequence 
$$(2) \quad \cB=\cB_0\leftarrow \cdots \leftarrow \cB_r$$
(see \ref{V:ne6.1}). By the validity of condition ($A$) (or its equivalent ($C$)), the induced sequence of closed fibers of (2):
$$(3) \quad B^{(0)}=B^{(0)}_0\leftarrow \cdots \leftarrow B^{(0)}_r$$ 
is the algorithmic resolution of the fiber $B^{(0)}$.

Now, if $C$ is a closed scheme of $W$ and $x \in C$ is a closed point, let $\nu _x(I,C)$ denote the largest integer $b$ such that $I_x \subseteq [I(C)_x]^b$ (in ${\cO}_{W,x}$). It is known that there is a dense open set $U \subseteq C$ such that 
$\nu (I,C)=\nu _x(I,C)$, for all $x \in U$ (see \ref{V:ne1.1} and \cite{NE}, 3.10). We have, for any integer $n \ge 0$:
$$(4) \quad \nu _x(I,C) \le \nu _x(I^{(n)},C^{(n)}) \le  \nu _x(I^{(0)},C^{(0)})$$
(with $C^{(n)}=C \cap W^{(n)}$). If $C=C_0$ is the $0$-th center of the algorithmic resolution (1), then for $x$ in a dense open set of $C_0$, in (4) we have 
$\nu _x(I,C)=\nu_x(I^{(0)},C^{(0)})$, hence all the inequalities in (4) are equalities. This implies that $C_0^{(n)}$ is a permissible center for ${\cB}_0^{(n)}:={\cB}^{(n)}$.

If $q_1:W_1 \to W_0=W$ is the blowing-up with center $C_0$, then by \ref{P:bu} the transform $\cT ({\cB}_0^{(n)},C_0^{(n)})$ may be identified to $\cB _1^{(n)}$, the object induced by $\cB _1$ on the scheme $W_1^{(n)}:=q_1^{-1} (W_0^{(n)})$ (identifiable to the blowing-up of $W_0^{(n)}$ with center $C_0^{(n)}$). In other words, ${\cB}_1^{(n)}={\cT}({\cB}_0^{(n)},C_0^{(n)})$.  Iterating, if condition ($A$) is valid for $B=B_0$, we obtain from sequence (1) (after identifications as indicated above) the permissible sequence of $A_n$-basic objects 
$$(5) \quad   {\cB}^{(n)} =  {\cB}_0^{(n)} \leftarrow  {\cB}_1^{(n)} \leftarrow \cdots \leftarrow  {\cB}_r^{(n)} $$ 
with centers $C_i^{(n)}=C_i \cap W_i^{(n)}$, $i=0, \ldots , r-1$.    We shall see that:
\begin{itemize}
 \item [($\alpha$)] \emph { for any $n$,  $\cB^{(n)}$ satisfies condition $\cE_i$ for all possible $i$}, 
 \item [($\beta$)] \emph {the sequence (5) is the algorithmic equiresolution of $\cB ^{(n)}$ thus determined.}
\end{itemize}
Statement ($\alpha$) is a consequence  of the  fact that, as we shall see, the following assertions ($\gamma _i$) are valid, for all $i \ge 0$:

\smallskip
\begin{itemize}
\item[ ($\gamma _i$) ]
\emph  {If $\cB ^{(n)}$ satisfies conditions $\cE _j$ for $j<i$ and the $i$-th truncation of (5) is the partial equiresolution thus determined (see $(2)_j$ in \ref{V:ne9}), then $\cE _i$ is also valid and the $i$-th algorithmic center is $C_i^{(n)}:=C_i \cap W_i^{(n)}$  (hence the $(i+1)$-truncation of (5) is the associated partial equiresolution of $\cB ^{(n)}$).}
\end{itemize}
Consider  the $\o$- and $t$-functions of (1) and (3); the $q$-th ones are respectively denoted by 
$\o _q, \o ^{(0)}_q$ and $t_q, t^{(0)}_q$. Note that condition ($A$) implies:
$$ (6)  \quad {\mathrm{max}}({\o}_j) = {\mathrm{max}}({\o}^{(0)}_j) ~ {\mathrm{and}}~{\mathrm{max}}({t}_j) = {\mathrm{max}}({t}^{(0)}_j). $$
To prove ($\gamma _i$),  there are several cases to consider:

(i) ${\mathrm{max}}({\o}_i) = 0$ (monomial case).

(ii) ${\mathrm{max}}({\o}_i) > 0$ and ${\mathrm{Max}}(t_i)$ has components of codimension one (notation of \ref{V:ne11}).

(iii) ${\mathrm{max}}({\o}_i) > 0$ and ${\mathrm{Max}}(t_i)$ has codimension $>1$.
\end{voi}

\begin{voi}
\label{V:e8}
{\it Case (i)}. Note first that by the equalities (6) of \ref{V:e7}, both $B_i$ and $B^{(0)}_i$ are monomial basic objects. Then, for all $n \ge 0$,  the $A_n$-basic object ${\cB}^{(n)}_i$ is pre-monomial (since the fiber ${\cB}^{(0)}_i$ is monomial). We claim that $B^{(n)}_i$ is monomial (hence condition $(\cE _i)$ is valid) and that the canonical center is the restriction of the $i$-th algorithmic center $C_i$ of $B_i$.

Let  $E_i=(H_{i1},\ldots,H_{is})$, $E^{(0)}_{i}=(H^{(0)}_{i1},\ldots,H^{(0)}_{is})$ (where $H^{(0)}_{ij}=H_{ij}\cap W^{(0)}_i \subset W_i$).
 By the equality 
$\Gamma_i(w)=g_i(w)=g^{(0)}_i(w)=\Gamma ^{(0)}_i(w)$ for any $w \in W^{(0)}_i$ (valid because condition ($F$), equivalent to ($A$), holds) if 
$C_i = H_{i{s_1}} \cap \cdots \cap H_{i{s_q}}$, for indices $s_1 < \cdots < s_q$, then also the $i$-th algorithmic center for $B^{(0)}_i$ is $C^{(0)}_i = H^{(0)}_{i{s_1}} \cap \cdots \cap H^{(0)}_{i{s_q}}$. Let $C^{(n)}_i:=C_i \cap W_i^{(n)}$, so $C^{(n)}_i= H^{(n)}_{i{s_1}} \cap \cdots \cap H^{(n)}_{i{s_q}}$, with $H^{(n)}_j$ induced by $H_j$, for all $j$. Since for all irreducible component $C$ of $C_i$, $\nu(I_i,C) \le \nu(I^{(n)}_i,C^{(n)}) \le \nu(I^{(0)}_i,C^{(0)})$ (with $C^{(n)}$ and $C^{(0)}$ induced by $C$ on ${\cB}^{(n)}$ and $B^{(0)}$ respectively) and  $\nu(I_i,C) = \nu(I^{(0)}_i,C^{(0)})$ (because condition ($A$) holds, see \ref{V:ne6.1}), all the inequalities above are equalities. This implies that  $C^{(n)}_i$ is monomial and that  $C^{(n)}_i$ is the center. Hence,   $\cE _i$ holds for ${\cB}^{(n)}_i$ as claimed.
\end{voi}

\begin{voi}
\label{V:e9} 
{\it Case (ii)}. By the validity of condition ($A$), we have $C_i \cap W_i^{(0)}=C_i^{(0)}$. This implies that both  ${\mathrm {Max}}(t_i)$ and ${\mathrm {Max}}(t^{(0)}_i)$ have codimension one. So, to check condition $\cE_i$ for ${\cB}^{(n)}_i$, we are in the situation  ($\beta$) of \ref{V:ne11}. Let $x$ be a closed point of $C_i$, $U$ a neighborhood of $x$ in $W_i$ such that the object $({B_i}_{|U})''=(U, {I_i}_{|U}''),b'',E_i'')$ is defined and, moreover $U \cap L = \emptyset$ for any irreducible component $L$ of  ${\mathrm {Max}}(t_i)$ of codimension $>1$.  Then,  $C_i \cap U = {\mathcal V}(\Delta ^{b''-1}({I_i}_{|U} ''))$.  
 Without loss of generality, we may assume that  $U=W_i$, then we simply write $B_i ''$, $I_i ''$, etc. We always have: 
 $\Delta ^{b''-1}({I_i} ''/S)  \subseteq   \Delta ^{b''-1}({I_i} '')$ (using (2) of \ref{V:e7} to define  $\Delta ^{b''-1}({I_i} ''/S)$).  From the fact that these ideals are locally principal, generated by an order one element, and Lemma \ref{L:orduno}, it follows that the  inclusion above is an equality. Thus,   $C_i  = {\mathcal V}(\Delta ^{b''-1}({I_i}''/S))$. Now, 
  ${\Delta}^{b''-1}({I_i}''/S)$ 
induces over $\cB ^{(n)}$ the $W^{(n)}_i$-ideal    ${\Delta}^{b''-1}((I^{(n)}_i)''/S)$. Since 
$C_i \cap W^{(0)}_i = C^{(0)}_i$, 
then $C_i \cap W^{(n)}_i= C^{(n)}_i =  {\mathcal{V}}({\Delta}^{b''-1}(({I^{(n)}_i})''/S))$, and this is a permissible $\cB^{(n)}$-center. 
  But this means that 
  ($\gamma _i$) is valid for $\cB ^{(n)}$. This proves case (ii). 
 
 \smallskip

Note that so far we did not use induction on the dimension of $\cB$. If $\dim\,{\cB}=1$, the only possible cases are those discussed so far. So, the case $\dim\,{\cB}=1$ is settled. The base being established, in the remainder of the proof we shall use induction on the dimension.
\end{voi}

\begin{voi}
\label{V:e10} 
 {\it Case (iii)}. The equality (6) of \ref{V:e7} and the fact that $C_i \cap W_i^{(0)}=C^{(0)}_i$ imply that in this case to verify ($\gamma _i$) we are in situation ($\alpha$) of \ref{V:ne11}.
 Consider the smallest index $s$ such that 
${\mathrm{max}}(t_{s-1})>{\mathrm{max}}(t_{s})= \cdots = {\mathrm{max}}(t_{i})$ 
(it could be $s=i$). We shall prove that if we assume that 
 ${\cB}^{(n)}$ satisfies condition $\cE_{s-1}$ (i.e., in case $i=0$ we make no assumption) then, it also satisfies conditions $\cE _j$   and $C_j^{(n)}$ is the corresponding algorithmic equiresolution center, $s \le j \le i$. Firstly, we recall how the algorithmic center $C_i$ of 
\ref{V:e7} (1)   is obtained in this case. Cover 
$M_s := \max (g^{(0)} _s)$ by open sets (in $W_s$) $V_{s, \lambda}$ such that on each 
$V_{s \lambda}$ the nice basic object 
$ (HB_{s \lambda}) := H(({B_s}_{|V_{s \lambda}})'') $ is defined, with inductive hypersurface 
$Z_{s \lambda}$ ($\lambda$ in a suitable set).  Then we consider the inductive objects 
$B^* _{s \lambda}:={(HB_{s \lambda})}_{Z_{s \lambda}}$. Next take the algorithmic resolution of each object $B^* _{s \lambda}$ (defined by induction on the dimension) :
$$  (1) \quad B^*_{s \lambda}:=(B^*)_s \longleftarrow {(B^*_{\lambda})}_{s+1}
\longleftarrow \cdots \longleftarrow {(B^*_{\lambda})}_i \longleftarrow \cdots    $$
obtained by using algorithmic centers 
$C_{j \l} \subset us ((B^*)_j)$, $j=s, s+1, \ldots$. The sequence (1) induces a permissible sequence 
$$(2) \quad  (HB_{s \lambda}):=(HB_{\l} )_s \longleftarrow \cdots \longleftarrow 
(HB_{\l} )_i  \longleftarrow \cdots $$ 
using the same centers $C_{j \l}$ as before. Moreover, if $Z_{j \l}$ is the strict transform of $Z_{s \l}$ to $V_{j \l} \subset W_j$ (the inverse image of $V_{s \l}$ via the morphism 
$W_{s} \leftarrow W_j$ coming from (1) of \ref{V:e7}, we have  identifications 
${(B^* _{\l})}_j = {[{(HB_\l )}_j]}_{Z_{j \l}}, ~j \ge s$.
 So, $C_{j \l}$ is a locally closed subscheme  of 
$us({(HB _{\l})}_j) = V_{j \l}$. Then, the open sets $V_{i j}$ cover $\ma (g^{(0)}_i)$ and the $i$-th algorithmic center $C_i$ in \ref{V:e7} (1)  satisfies 
${C_i}_{|V_{i \l}} = C_{i \l}$. 

The hypersurface $Z_{s \l}$ is defined, say near a closed point $x_s \in M_s \cap W_s ^{(0)}$, by a first order element of ${[\Delta ^{b''-1}(H(I_s''))]}_{x_s}$ (where, to simplify, we wrote $I_s={I_s}_{|V_{s \l}}$, similar conventions will be used in the sequel). By 
Lemma \ref{L:orduno}, 
 using sequence (2) above, $Z_{s \l}$  may be defined, near $x_s$, by an element of 
${[\Delta ^{b''-1}(\cH( (I_s'' / S) )]}_{x_s}$.
 Moreover, this can be done in such a way that this element induces an element of order one on the fibers. With this choice we have 
$$   (\Delta ^{b''-1}(\cH (I_s''/S))) {\cO}_{W_s ^{(n)}}=
\Delta ^{b''-1}(\cH (I_s''/S_n))    $$ 
and $Z_{s \l} \cap W_s ^{(n)}=Z_{s \l} ^{(n)}$ is a 
$(\cH {\cB}_{s \l} ^{(n)})$-adapted hypersurface, where 
$({\cH {\cB}}_{s \l} ^{(n)})$
 denotes the $S_n$-basic object 
${{{\cH}(\cB_s ^{(n)}}'')}_{|V_{s \l}}$.
 This implies that 
$       {\cH} ({\cB _s}_{|V_{s \l}})''  $ induces the $S_n$-basic object   
${\cB ^*}_{s \l}:= 
{{(\cH ({\cB}_{s} ^{(n)})}_{|V_{s \l}})'')}_  {Z_{s \l} ^{(n)}}   $.

Now we assert that condition ($C$) (equivalent to ($A$)) holds for ${\cB ^*}_{s \l}$, for all $\l$. The non-trivial part is to show that  if $C^* _j$ are the centers used in the algorithmic resolution of $aso ({\cB ^*}_{s \l})$, then:

\begin{itemize}
\item[ ($\alpha$) ]
 $  C^*_j \cap Z_{j \l}^{(0)}=C_j^{(0} \cap Z_{j \l}^{(0)}  $, \emph{where $Z_{j \l}$ is the closed fiber of $Z_{j \l} \to S$ and we use the notation of \ref{V:e7} (3)}.
\end{itemize}

Now, by Proposition \ref{P:BZ}, 
$ {\widetilde {\cB}}_{s \l}^* := 
{(Z_{s \l} \to S, \cC(HI_{s \l}'',Z_{s \l}),b''!,E_s)} _{|Z_s \l})$
satisfies condition ($C$), hence the centers used in the algorithmic resolution of  $aso({\widetilde {\cB}}_{s \l}^*)$ satisfy ($\alpha$). But by 
Proposition \ref{P:imp1},
 the algorithmic resolution centers of $aso({\widetilde {\cB}}_{s \l}^*)$ and 
$aso({\cB ^*}_{s \l})$ coincide, hence our claim follows. Consequently, by induction on the dimension, ${\cB ^*}_{s \l}    $ satisfies condition ($E$) and the   algorithmic equiresolution center that is associated to  ${\cB}^*_{s\lambda}$ is precisely  $C^*_{s \l} \cap W^{(n)}_{s} \cap V_{s\lambda}=C^{(n)}_s \cap V_{s\lambda}$, where $C^*_{s\lambda}$  is the algorithmic center for $B^*_{s\lambda}$. But, by construction, $C^*_{s\lambda}=C_s \cap V_{s \lambda}$ (with $C_s$ the $s$-th algorithmic center of $B$).  Hence, 
 $C^{(n)}_s \cap V_{s\lambda}=C_s \cap W^{(n)}_s $.  As explained in \ref{V:e7}, this is  ${\cH}({{\cB}^{(n)}_s})_{ |V_{s\lambda}}$-permissible. Hence, condition $\cE_s$ is valid for $\cB ^{(n)}$ and, moreover, the $s$-th algorithmic center is precisely $C_s \cap W ^{(n)} _s$.  
Now look at sequence (5) in \ref{V:ne12}. We have  
${\cB}_{s+1}^{(n)}={\cT}({\cB}_s^{(n)},C_s^{(n)})$. With the previous notation, let 
${\cH}{\cB}_{s+1 \l}:={\cH}(({\cB}_{s+1 \l})'')$.
 As in  
\cite{NE}, 6.10 (Giraud's Lemma)  for every $\l$ the strict transform $Z_{{s+1}\l}$ of $Z_{s \l}$ to $V_{{s+1} \l}$ is an $S$-adapted hypersurface, locally defined by a section of the sheaf 
 $({\Delta}^{b''-1}(({\cH}(I_{s+1}/S))'')$.  We  may repeat the argument above to conclude that:
 $Z_{{s+1}  \l}$ induces  a  $({H{B}^{(n)}}''_{{s+1} \lambda})$-adapted hypersurface  $Z^{(n)}_{{s+1},\l}$;    $B^*_{{s+1}\lambda}$ induces 
a ${\cH}{\cB}_{s+1 \l}^{(n)}$; and   
${\cH}( ({{\cB}_{s+1 \l}}_{|V_{s+1 \l}}       )''      )$  
induces the $S_n$-basic object 
${{\cB}^*}_{s+1 \l}:={( {\cH}( ({{{\cB}^{((n))}}_{s+1}}_{|{V_{s+1 \l}}})'' ) )}_{Z_{s+1 \l}}$. 

Again by induction on the dimension we see that   
${\cB}^*_{{s+1}\lambda}$ 
 satisfies condition ($E$) and, as before, using Proposition \ref{P:imp2},   the algorithmic equiresolution center that is associated to  ${\cB}^*_{{s+1}\lambda}$ is precisely  $C^*_{s+1} \cap W^{(n)}_{{s+1}} \cap V_{{s+1}\lambda}=C^{(n)}_{s+1} \cap V_{{s+1}\lambda}$, where $C^*_{{s+1}\lambda}$  is the algorithmic center for $B^*_{{s+1}\lambda}$ and 
$C^{(n)}_i \cap V_{i\lambda}=C_i \cap W^{(n)}_i $. According to \ref{V:e7}, this is ${{\cH}(({{\cB}^{(n)}_{s+1}})'')}_{ |V_{{s+1}\lambda}}$-permissible. Hence, condition $\cE_{s+1}$ is valid for $\cB ^{(n)}$ and moreover the ${s+1}$-th algorithmic center is precisely $C_{s+1} \cap W ^{(n)} _{s+1}$.   
 Iterating,  we get that analogous results are valid for $j=s, \ldots, i$. In particular,  condition $\cE _j$ holds for $\cB ^{(n)}$ and the $j$-th algorithmic equiresolution center is precisely $C^{(n)}_j$, as claimed.
\
Finally, statement ($\beta$) in \ref{V:e7} follows from the fact that since $\cB$ satisfies condition ($A$) (or ($C$)), the length $r$ of sequence (1) in \ref{V:e7} is equal to the length $r_0$ of the algorithmic resolution sequence of the closed fiber $B^{(0)}$. Thus, the fact that $(A)$ implies ($E$) is proved.
\end{voi} 

\smallskip
Summarizing, we have proved that Conditions $(A)$ and $(C)$ are equivalent. Hence, in view of our results in Section \ref{S:BE}, we have:
\begin{thm}
 \label{T:mango} Conditions $(A)$, $(F)$, $(C)$ and $(E)$ are equivanent for families of basic objects parametrized by a regular schem $T$ (in the class $\cS$).
\end{thm}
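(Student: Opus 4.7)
The plan is to assemble this theorem entirely from results already established in Sections~\ref{S:BE} and~\ref{S:E}; no new arguments are needed. Proposition~\ref{P:AFC} gives the equivalence $(A) \Leftrightarrow (F) \Leftrightarrow (C)$, so it suffices to attach $(E)$ to this group of equivalences, which I would do by combining the two implications $(E) \Rightarrow (C)$ and $(A) \Rightarrow (E)$ proved in the preceding section.

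For $(E) \Rightarrow (C)$ I would simply invoke \ref{V:e1}--\ref{V:e5}: reduce to $T = S = \Spec A$ with $(A,\cM)$ a complete $k$-algebra, set $A_n = A/\cM^{n+1}$, and prove by induction on $i$ the statement $(\alpha_i)$ that $C_i \cap W_i^{(n)} = C_i^{(n)}$ for every $n$, where $C_i^{(n)}$ is the $i$-th equiresolution center furnished by $\cE_i$. The passage from level $i-1$ to level $i$ uses Proposition~\ref{P:bu} to identify the blowing-ups across base change, and splits into the monomial case, the codimension-one case of $\ma(t_i^{(0)})$ (handled by a direct analysis of $\Delta^{b''-1}(I_i''/S)$), and the inductive case, which is the substantive one: there one applies condition $(E)$ to the inductive objects $\cB^*_{i\lambda} = (\cH\cB''_{i\lambda})_{Z_{i\lambda}}$, uses the induction on dimension to conclude that each $\cB^*_{i\lambda}$ satisfies $(A)$, and then invokes Proposition~\ref{P:imp2} to transfer the algorithmic centers back to $(B_i)_{Z_{i\lambda}}$. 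Finally, the length equality $r = r_0$ in part (ii) of $(C)$ follows by the smoothness-of-$C_r$ contradiction argument at the end of \ref{V:e5}.

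For $(A) \Rightarrow (E)$ I would invoke \ref{V:e7}--\ref{V:e10}: again reducing to the complete local case, the sequence (1) of \ref{V:e7} induces a permissible sequence (5) on $\cB^{(n)}$ via Proposition~\ref{P:bu}, and one proves by induction on $i$ the statements $(\gamma_i)$ that $\cE_i$ holds for $\cB^{(n)}$ with associated center $C_i^{(n)} = C_i \cap W_i^{(n)}$. The same three-case split appears: the monomial and codimension-one cases are direct (using the equality $\Delta^{b''-1}(I_i''/S) = \Delta^{b''-1}(I_i'')$ on first-order generators, supplied by Lemma~\ref{L:orduno}), and the inductive case uses Lemma~\ref{L:orduno} again to promote the inductive hypersurface $Z_{s\lambda}$ to one defined by a section of $\Delta^{b''-1}(\cH(I_s''/S))$, then applies Propositions~\ref{P:BZ} and~\ref{P:imp1} to transfer condition $(A)$ to the inductive object $\cB^*_{s\lambda}$, and finally appeals to the induction on dimension.

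The main obstacle, were one to reprove everything from scratch, would again be the inductive case in both directions: the bookkeeping needed to guarantee that the adapted hypersurfaces $Z_{s\lambda}$ can be chosen to be relatively adapted (so that they behave well under base change $A \to A_n$), and that this relative choice is compatible with the homogenization step, is precisely where Lemma~\ref{L:orduno} and Propositions~\ref{P:imp1}, \ref{P:imp2}, \ref{P:BZ} do the real work. Since all of this is in place, the proof of Theorem~\ref{T:mango} reduces to citing Proposition~\ref{P:AFC} together with the two implications in \ref{V:e1}--\ref{V:e5} and \ref{V:e7}--\ref{V:e10}, which close the circle $(A) \Rightarrow (E) \Rightarrow (C) \Rightarrow (A)$ and thereby give the stated equivalence.
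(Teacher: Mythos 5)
Your proposal is correct and follows exactly the paper's own route: the theorem is obtained by combining Proposition~\ref{P:AFC} with the implications $(E)\Rightarrow(C)$ from \ref{V:e1}--\ref{V:e5} and $(A)\Rightarrow(E)$ from \ref{V:e7}--\ref{V:e10}, closing the cycle precisely as you describe. Your summaries of the two implications, including the roles of Lemma~\ref{L:orduno} and Propositions~\ref{P:BZ}, \ref{P:imp1}, \ref{P:imp2}, match the paper's arguments.
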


\section{Families of ideals and varieties}
\label{S:IV}

\begin{voi}
\label{V:I1} {\it Idealistic triples}. An {\it idealistic triple} (or {\it id-triple}) over a scheme $T \in \cS$ (or, simply, a $T$-triple) is a $3$-tuple 
${\mathcal T}=(\pi:W \to T,I,E)$, where $\pi$ is a smooth morphism, $I$ is a $W$-ideal such that for all closed points $t \in T$ the $W^{(t)}$-ideal $I{\cO}_{W^{(t)}}$ is never zero (with $W^{(t)}:={\pi}^{-1}(t)$), and $E=(H_1, \ldots,H_n)$ is a collection of $T$-hypersurfaces having normal crossings (see \ref{V:ne0}). \\
If $t$ is a closed point of $T$, there is a naturally defined notion of {\it fiber} of $ \cT$ at $t$ (by reducing modulo the maximal ideal of ${\cO}_{T,t}$). More generally, this notion is valid for any point $t \in T$. Indeed, firstly we consider the naturally induced id-triple over $T_t=\Spec ({\cO}_{T,t}) $ and then the induced id-triple over the closed point of $T_t$ (see \ref{V:ne0.1}). When $T=\Spec k$, $k$ a field clear from the context, we often will simply write ${\mathcal T}=(W,I,E)$.
\end{voi}

\begin{voi}
\label{V:1.1} In a similar way, we can define, as in \cite{BEV}, page 393, the notion of {\it family of ideals}. A  family of ideals, parametrized by a scheme $T$ (in $\cS$) is a pair ${\mathcal I}=(\pi:W \to T,I)$, where $T$ and $I$ are as in \ref{V:I1} We define the notion of fiber as above. Given a family of ideals $(\pi:W \to T,I) $ there is a canonically associated id-triple, namely $(\pi:W \to T,I,\emptyset)$.
\end{voi}

\begin{voi}
\label{V:I2} Given the id-triple $\cT=(\pi:W \to T,I,E)$, $E=(H_1, \ldots,H_n)$, a subscheme $C$ of $W$ having normal crossings with $E$ relative to $T$ (see \ref{V:ne0}) is called a $T$-{\it permissible center} (or a ${\mathcal T}$-center). If $C$ is a $T$-permissible center, we define the transform of $\cT$ with center $C$. This is the id-triple ${\cT}_1=({\pi}_1 :W_1 \to T,I'_1,E_1)$, where ${\pi}={\pi}{q}_1$ (with $q_1:W_1 \to W $ the blowing-up of $W$ with center $C$), $I_1'=I{\cO}_{W_1}$, and $E_1=(H'_1, \ldots, H_{n+1}')$ is defined as in \ref{V:ne6}. The process of replacing an id-triple $\cT$ by a transform (with a $T$-permissible center) is called a {\it permissible transformation} of the triple.

A $T$-principalization of an $id$-triple ${\cT}$ is a sequence of $T$-permissible transformations 
${\cT}={\cT}_0 \leftarrow {\cT}_1 \leftarrow \cdots \leftarrow {\cT}_r$ of id-triples (we write 
${\cT}_i=({\pi}_i:W_i \to T, I'_i,E_i)$), having the following properties: \\
(i) for all $z \in W_r$, the stalk $(I'_r)_z$ is generated by an element 
$f_z \in {\cO}_{W_r,z} $, not in $r({\cO}_{T,t}){\cO}_{W_r,z}$, where $t={\pi}_{r}(t)$, moreover 
${\mathcal V}(I'_r)$ and $E_i$ have normal crossings.
 \\
(ii) The composition morphism $q: W_r \to W$ induces an isomorphism  $q^{-1}(U) \cong U$ where $U=W \setminus V(I)$.

By taking fibers, for each $t \in T$ such an $T$-principalization induces a principalization sequence for ${\mathcal T}^{(t)}$, in the sense of \cite{BEV}, Theorem 2.5.

It is known that when the base is a field $k$ (of characteristic zero), the V-algorithm of resolution for basic objects  induces an algorithm for {\it principalization} of id-triples. Namely, given the id-triple 
 $\mathcal T=(W,I,E)$ over a field $k$, one considers the basic object 
 $B_0= (W,I,1,E)$ and applies the algorithm to $B_0$, getting a resolution 
 $B=B_0 \leftarrow \cdots \leftarrow B_r$, $B_i=(W_i,I_i,1,E_i)$, $i=0, \ldots r$. Then,  by dropping the entry $b=1$ in each basic object $B_i$, for a suitable $s \le r$, we get the desired principalization. Namely, $s$ is the first index such that $\max (\o _s)=0$. See \cite{BEV}, Parts I and II, for details. Henceforth this principalization process will be referred to as {\it the algorithmic principalization  of} $\mathcal T=(W,I,E)$.
 \end{voi}
 
\begin{voi}
\label{V:I3} The different equiresolution conditions for basic objects discussed in  Sections \ref{S:FB} and \ref{S:CE} naturally induce analagous notions for families of id-triples and of ideals. Namely, we say that the id-triple $\cT=(W\to T,I,E)$ (with $T$ smooth) satisfies condition ($A$) (resp. ($F$), ($C$), ($\tau$), ($E$)) if the $T$-basic object 
${\cB}={(W\to T,I,1,E)}$ does so.

From Theorem \ref{T:mango} it follows that conditions $(A)$, $(F)$, $(C)$ and $(F)$ on an id-triple $\cT=(W\to T,I,E)$, $T$ smooth, are equivalent. Moreover, any of these is equivalent to ($\tau$) if all the projections $C_i \to T$ are proper (notation of \ref{V:ne2}, see \ref{P:tau}). However, condition ($E$) makes sense for an arbitrary parameter scheme $T \in \cS$.  
 
We shall say that the $T$-triple $\cT$ ($T$ arbitrary)  is {\it equisolvable} if condition ($E$) holds. If so, $\cT$ induces the algorithmic resolution of each fiber. If $T$ is smooth, for $\cT$ to be equisolvable, it is equivalent to require  the validity of condition ($A$), ($F$) or ($C$) (or ($\tau$), if $W \to T $ is proper).

If $\cT$ is equisolvable and in the algorithmic equiresolution sequence of the associated $T$-basic object $\cB$ we systematically delete the third entry (equal to 1), we obtain an equiresolution of $\cT$, called its {\it algorithmic equiresolution}. 

Finally, given a family of ideals 
$\mathcal I=(W \to T, I)$ (with $T$ in $\cS$), we say that it is equisolvable  if its associated family of id-triples 
$\cT=(W \to T, I, \emptyset)$ is equisolvable.  We define the {\it algorithmic equiresolution} of $\mathcal I$ in an obvious way from that of the associated family of id-triples, by deleting the last entry of each term.
\end{voi}

\begin{voi}
\label{V:I4}
Working over a characteristic zero field $k$, we say that an {\it embedded scheme } 
 is a pair $\mathcal X = (X, W)$ where $W$ is a scheme, smooth over $k$, and $X$ is a equidimensional subscheme of $W$. If, moreover, $X$ is reduced, we shall talk about an {\it embedded variety }.
 
 A {\it resolution} of an embedded variety $\cX$ is a proper, birational morphism ${f:W' \to W}$, with $W'$ smooth, such that: (i) the exceptional locus of $f$ is the  union of regular hypersurfaces 
 $H_1, \ldots, H_n$ with normal crossings, (ii) the strict transform $X'$ of $X$ to $W'$ is regular, and has normal crossings with $H_1, \ldots, H_n$, (iii) $f$ induces an isomorphism $X' \setminus f^{-1}(\Sigma) \to X \setminus \Sigma$, where $\Sigma$ denotes the singular locus of $X$.

 As explained in sections (2.4) and (5.8) of \cite{BEV}, the V-algorithm for resolution of basic objects induces an algorithm for resolution of an embedded variety $\cX =(X,W)$. Let us review this method. Consider the basic object $B=(W,I(X),1,\emptyset)$ and its corresponding algorithmic resolution:
 $$B=B_0 \leftarrow {B}_1 \leftarrow \cdots \leftarrow {B}_r $$
 obtained via resolution functions $g_0, \ldots, g_{r-1}$, taking values in a totally ordered set ${\Lambda}^{(d)}$, $d=\dim(W)$ 
 (which yield algorithmic centers $C_i={\mathrm{Max}}(g_i), \, i=0, 
 \dots r-1$). We write 
 $B_i = (W_i, I_i, 1, E_i)$, for all $i$. 
 Then, $g_0$ is constant, say equal to $a \in {\Lambda}^{(d)}$ on $W \setminus \sg (X)$ and there is a unique index $n$ (depending on $B$, hence on $\cX$) such that max($g_n$)=$a$.  Moreover, the strict transform $X_n$ of $X$ to $W_n$ is a union of components of the algorithmic center $C_n = {\mathrm {Max}}(g_n)$ (i.e., the $n$-th center in the algorithmic resolution process). Hence, $X_n$ is regular, and it has normal crossings with $E_n$.  We shall denote the index $n$ above by $\eta({\cX})$.  

 The sequence
 $$\cX=(W, \emptyset)=(W_0,\emptyset) \leftarrow (W_1,E_1) \leftarrow \cdots \leftarrow (W_{\eta({\cX})},E_{\eta({\cX})})$$
 where $W_i$ and $E_i$ are obtained as above (i.e., for $i=1, \ldots \eta(\cX)$, $W_{i} \to W_{i-1}$ is the blowing up with center $C_{i-1}$ and  $E_i $ is the exceptional divisor of the induced morphism $W_i \to W$), is called the {\it algorithmic resolution of $\cX$}. Then the induced morphism 
 $W_{\eta(\cX)} \to W$ is indeed a resolution of $\cX$.
 \end{voi}

 \begin{voi}
 \label{V:I5}
 {\it Families of embedded schemes}. 
  An $\cS$-family of $d$-dimensional embedded schemes is a pair $\cX = (X, p:W \to T)$,  where $T \in \cS$, $X$ is a closed subscheme of $W$ and   $p$ is smooth, such that: (a) for all $t \in T$ the pair of fibers $\cX ^{(t)} = (X^{(t)}, W^{(t)})$, $t \in T $ 
  is an embedded scheme over $k(t)$ (i.e., $X^{(t)}$ is equidimensional) and (b) the morphism $X \to T$ induced by $p$ is flat. 
 
 We shall say that a family $\cX$ of embedded schemes is {\it equisolvable} if the associated family of ideals $ (p:W \to T, I(X)) $ is equisolvable, i.e., the $T$-basic object $ (p:W \to T, I(X),b,\emptyset) $ satisfies condition ($E$). If 
 $T$ is  a regular scheme this is equivalent to require that 
 any of the equivalent conditions ($A$), ($F$), ($C$) hold.
 
 If the family of embedded schemes $\cX=(X,p:W \to T)$ ($T$ arbitrary in $\cS$) is equisolvable, then the algorithmic equiresolution sequence of the $T$-basic object 
  ${\cB}(\cX):= {(W \to T, I(X),1,\emptyset)} $
 induces, for any $t \in T$, the algorithmic resolution of the fiber    
 $B(\cX)^{(t)}= {(W^{(t)}, I(X^{(t)}),b,\emptyset)} $. In particular, if $W^{(t)}$ is an algebraic variety (i.e., it is reduced), 
 as explained in \ref{V:I4}, the $\eta(\cX)$-truncation of the algorithmic resolution of  $B(\cX)^{(t)}$ yields the algorithmic resolution of the embedded variety ${\cX}^{(t)}$.

 Suppose now that $T$ is smooth and irreducible and that, for $t$ in a non-empty subset $U$ of $T$,  the fiber $X^{(t)}$ is an algebraic varities.  In particular, the generic fiber is an algebraic variety (hence reduced). This implies (e.g., by using condition ($F$)) that for all $t \in U$, 
 $\eta(X^{(t)})=\eta(X)$. That is, the length of the algorithmic resolution sequence of each fiber 
 is the same. Thus, the algorithmic resolution of embedded scheme $\cX$ induces that of each fiber ${\cX}^{(t)}$ (an embedded variety over the field $k(t)$).
 
 As remarked in \cite{BEV} page 393, the advantage of the present approach (using the ideal $I(X)$) is that it  makes sense even in the case where some fibers are not reduced.  
\end{voi}

\providecommand{\bysame}{\leavevmode\hbox to3em{\hrulefill}\thinspace}


\begin{thebibliography}{10}

\bibitem{An} M. Andr\'e \emph{Homologie des alg\`ebres commutatives}, Springer-Verlag, Berlin (1974)

\bibitem{BEV} A. Bravo, S.~Encinas and O. Villamayor \emph{A simplified proof of desingularization and applications}, Rev. Mat. Iberoamericana 21, 349-458 (2005)

\bibitem{BM} E. Bierstone and P. Milman \emph{Canonical desingularization in characteristic zero by blowing up the maxium strata of a local invariant}, Invent. Math. 128, 207-302 (1997)

\bibitem{BMF} E. Bierstone and P. Milman \emph{Functoriality in resolution of singularities}, Publ. Res. Inst. Math. Sci. 44, 609-639 (2008)

\bibitem{BMM} E. Bierstone and P. Milman \emph{Desingularization algorithms I. The role of exceptional divisors }, Moscow Math. J. 3, 751-805  (2003)

\bibitem{Cu} S. Cutkovsky \emph{Resolution of singularities}, Graduate Studies in Mathematics 63, Am. Math. Soc. Providence R.I. (2004)

\bibitem{EH} S. Encinas and H. Hauser \emph{Strong resolution of singularities in characteristic zero}, Comm. Math. Helv. 77, 821-845 (2002

\bibitem{EN} S.~Encinas, A. Nobile and O. Villamayor \emph{On algorithmic equi-resolution and stratification of Hilbert schemes}, Proc. London Math. Soc 86, 607-648 (2003)


\bibitem{EV} S.~Encinas and O. Villamayor \emph{A course on constructive desingularization and equivariance}, in H. Hauser, J. Lipman, F. Oort and A. Quir\'os, editors, {\it Resolution of singularities}, Progress in Mathematics 181 (Birkhauser) (2000).

\bibitem{H} R. Hartshorne \emph{Algebraic Geometry}, Springer-Verlag, New York (1977)

\bibitem{Kol}  J. Kollar \emph{Lectures on resolution of singularities},Princeton University Press, Princeton (2007) 

\bibitem{M} H. Matsumura \emph{Commutative ring theory}, Cambridge University Press, Cambridge (1989)

\bibitem{NE} A. Nobile \emph{Algorithmic equiresolution of deformations of embedded algebraic varieties}, 
to appear in Revista Matem\'atica Hispanoamericana, 25, 995-1054 (2009)

\bibitem{W}
J. Wlodarczyk \emph{Simple Hironaka Resolution in Characteristic Zero}, Journal of the A.M.S., 18, 779-822 (2005)


\end{thebibliography}
\end{document}